\newtheorem{thm}{Theorem}[section]
\newtheorem{lemma}[thm]{Lemma}
\newtheorem{rem}[thm]{Remark}
\numberwithin{equation}{section} \topmargin=-2.5cm \oddsidemargin=0.4cm
\newcommand{\normmm}[1]{{\left\vert\kern-0.25ex\left\vert\kern-0.25ex\left\vert #1
    \right\vert\kern-0.25ex\right\vert\kern-0.25ex\right\vert}}
\begin{document}
\title{Efficient shifted fractional trapezoidal rule for subdiffusion problems with nonsmooth solutions on uniform meshes
\thanks{Corresponding author.
\newline \emph{Email addresses:} mathliuyang@imu.edu.cn,
\newline \emph{Manuscript~~~~~~~~~~~~~~~~ Sept. 2020}
}}
\date{ }
\author{Baoli Yin$^1$, Yang Liu$^{1*}$, Hong Li$^1$, Zhimin Zhang$^{2,3}$
\\\small{\emph{$^1$School of Mathematical Sciences, Inner Mongolia University, Hohhot 010021,
China;}}
\\\small{\emph{$^2$Beijing Computational Science Research Center, Beijing 100193, China;}}
\\\small{\emph{$^3$Department of Mathematics, Wayne State University, Detroit, MI 48202, USA}}
}
\date{}
 \maketitle
  {\color{black}\noindent\rule[0.5\baselineskip]{\textwidth}{0.5pt} }
\noindent \textbf{Abstract:}
This article devotes to developing robust but simple correction techniques and efficient algorithms for a class of second-order time stepping methods, namely the shifted fractional trapezoidal rule (SFTR), for subdiffusion problems to resolve the initial singularity and nonlocality.
The stability analysis and sharp error estimates in terms of the smoothness of the initial data and source term are presented.
As a byproduct in numerical tests, we find amazingly that the Crank-Nicolson scheme ($\theta=\frac{1}{2}$) without initial corrections can restore the optimal convergence rate for the subdiffusion problem with smooth initial data and source terms.
To deal with the nonlocality, fast algorithms are considered to reduce the computational cost from $O(N^2)$ to $O(N \log N)$ and save the memory storage from $O(N)$ to $O(\log N)$, where $N$ denotes the number of time levels.
Numerical tests are performed to verify the sharpness of the theoretical results and confirm the efficiency and accuracy of initial corrections and the fast algorithms.
\\
\noindent\textbf{Keywords:} {subdiffusion problems, fast algorithms, nonsmooth solutions, shifted fractional trapezoidal rule}
\\
 {\color{black}\noindent\rule[0.5\baselineskip]{\textwidth}{0.5pt} }
\def\REF#1{\par\hangindent\parindent\indent\llap{#1\enspace}\ignorespaces}
\newcommand{\h}{\hspace{1.cm}}
\newcommand{\hh}{\hspace{2.cm}}
\newtheorem{yl}{\hspace{0.cm}Lemma}
\newtheorem{dl}{\hspace{0.cm}Theorem}
\newtheorem{re}{\hspace{0.cm}Remark}
\renewcommand{\sec}{\section*}
\renewcommand{\l}{\langle}
\renewcommand{\r}{\rangle}
\newcommand{\be}{\begin{eqnarray}}
\newcommand{\ee}{\end{eqnarray}}
\normalsize \vskip 0.2in
\section{Introduction}\label{sec.int}
In this paper, we consider the subdiffusion problem
\begin{equation}\label{I.1}\begin{split}
\begin{cases}
  \partial_t^\alpha u(\boldsymbol x,t)-\Delta u(\boldsymbol x,t)=f(\boldsymbol x,t), & (\boldsymbol x,t)\in \Omega\times (0,T], \\
  u(\boldsymbol x,t)=0, & \boldsymbol x \in \partial\Omega, ~t\in (0,T],\\
  u(\boldsymbol x,0)=v(\boldsymbol x), & \boldsymbol x \in \Omega,
\end{cases}
\end{split}\end{equation}
where $T>0$, $\Omega$ is a bounded convex polygonal domain in $\mathbb{R}^d$ ($d=1,2,3$) with a boundary $\partial \Omega$.
$\Delta$ denotes the Laplacian that maps $H_0^1(\Omega)\cap H^2(\Omega)$ to $L^2(\Omega)$, and $f:(0,T]\to L^2(\Omega)$ is a given function.
The initial data $v\in D(\Delta):=H_0^1(\Omega)\cap H^2(\Omega)$ or $v\in L^2(\Omega)$.
$\partial_t^\alpha$ ($\alpha \in (0,1)$) is the Caputo fractional operator satisfying $\partial_t^\alpha \psi=D_t^\alpha (\psi-\psi(0))$ where $D_t^\alpha$ denotes the Riemann-Liouville operator defined by
\begin{equation}\label{I.2}\begin{split}
D_t^\alpha \psi(t)=\frac{1}{\Gamma(1-\alpha)}\frac{\mathrm{d}}{\mathrm{d}t}
\int_{0}^{t}(t-s)^{-\alpha}\psi(s)\mathrm{d}s,
\quad
\alpha \in (0,1).
\end{split}\end{equation}
\par
The subdiffusion problem (\ref{I.1}) in recent years has been studied extensively since as the simplest model of evolution equations that involve time fractional derivatives, it was successfully applied in engineering, physics, biology and finance, and show better description of physical phenomenons compared with the traditional model.
The method adopted in this paper to discretize temporal direction is known as shifted fractional trapezoid rule (SFTR) first proposed in \cite{LiuYin3}, which belongs to time-stepping methods.
Historically, several groups of time-stepping methods have been devised such as the convolution quadrature (CQ)\cite{Lubich1}, the $L$ type formulas ($L1$, $L1$-$2$, $L2$-$1_\sigma$)\cite{Diethelm,OldhamSpanier,SunWu,LinXuL1,GaoSunZhang,Alikhanov}, the weighted and shifted Gr\"{u}nwald difference operators (WSGD)\cite{TianZhouDeng,LiCai} and shifted convolution quadrature (SCQ)\cite{LiuYin1,DingLiYi}, and so on.
Generally, since the subdiffusion problem (\ref{I.1}) is characterized by the solution singularity at initial time\cite{Stynes1,SakamotoYamamoto}, researchers have paid special attention on developing novel techniques such as by using nonuniform meshes \cite{Stynes2,LiaoMcLeanZhang,KMustapha} or adding correction terms \cite{JinLiZh1,Yan1,JinLiZhou4,Zeng2,Yan2} to restore the optimal convergence rate.
\par
However, the study of time-stepping methods for problem (\ref{I.1}) is still inadequate, particularly for SCQ methods.
The authors in \cite{JinLiZh1} considered the fractional Crank-Nicolson scheme for (\ref{I.1}) which can be viewed as a special SCQ method approximating the fractional derivative at shifted node $t_{n-\theta}$ with $\theta=\frac{\alpha}{2}$.
A two-step correction technique was provided and sharp error estimates were proposed.
Inspired by \cite{JinLiZh1}, we shall adopt the SFTR which possesses a free shifted parameter $\theta$ and reduces to the fractional Crank-Nicolson scheme when $\theta=\frac{\alpha}{2}$ as studied in \cite{JinLiZh1}.
Moreover, we devise a single-step correction technique with correction coefficients determined by $\theta$, and thus is simpler than that considered in \cite{JinLiZh1} when $\theta=\frac{\alpha}{2}$.
We remark that numerical analysis for the scheme obtained by SFTR is nontrivial due to the complexity of introducing a free parameter $\theta$ and further, the independence of $\theta$ on $\alpha$ allows us to develop $\alpha$-robust theoretical results, i.e., the error bound remains finite when $\alpha\to 1^-$\cite{ChenStynes}.
As a byproduct, we find amazingly that the correction coefficients vanish if $\theta=\frac{1}{2}$, which corresponds to the Crank-Nicolson scheme, meaning that \textit{the initial singularity can be resolved by approximations at a shifted node}.
Although this phenomenon is verified only numerically in this article since the numerical analysis excludes the special case $\theta=\frac{1}{2}$ essentially, this is the first appearance of such a proposition for subdiffusion problem in literature to the best of authors' knowledge.
A similar conclusion was derived in \cite{GunzburgerWang} for fractional PDEs which were dominated by a first-order derivative.
\par
The nonlocality of the fractional derivative makes the computation rather time consuming and memory expensive.
Scholars have proposed different fast algorithms to save computing costs, see for example \cite{BaffetHesthaven,Schadle,Zeng1,JiangZhangZhang} and references therein.
Nevertheless, fast algorithms concerning SCQ methods are limited especially for the study on the choice of the shifted parameter $\theta$.
In this study, we consider two different fast algorithms for the fully discrete scheme.
Both algorithms can reduce the computing complexity from $O(N^2)$ to $O(N\log N)$ and memory requirement from $O(N)$ to $O(\log N)$ with fixed space mesh.
We further discuss the choice of $\theta$ to make sure the error incurred by fast algorithms is as small as possible.
\par
The contributions of this article can be summarized as follows.
\par
(i) Propose simple correction techniques for SFTR and reveal that correction coefficients are determined only by the shifted parameter $\theta$.
\par
(ii) Develop sharp and $\alpha$-robust error estimates for the proposed scheme depending on the smoothness of the initial data and source term.
\par
(iii) Fast algorithms are considered to reduce the computing cost including CPU time and memory requirement.
The experimental results concerning the choice of $\theta$ are meaningful for practical applications.
\par
(iv) Several numerical experiments are implemented to verify the correctness and sharpness of our theoretical results.
The Crank-Nicolson scheme combined with SFTR for (\ref{I.1}) is discovered to preserve the optimal convergence order if the initial data and source term are smooth enough.
\par
We organize the article in several sections.
In Sect.\ref{sec.sec}, we introduce the SFTR and formulate the fully discrete scheme with a novel correction technique.
Sect.\ref{sec.sta} devotes to the stability analysis.
In Sect.\ref{sec.sha}, sharp error estimates are provided for our scheme.
To save computing expenses, two different fast algorithms are considered in Sect.\ref{sec.fast} where some discussion on the choice of $\theta$ is given.
In Sect.\ref{sec.num}, several tests are conducted to verify the sharpness of our theoretical results and the efficiency of fast algorithms.
\par
Throughout, the symbol $c$, with or without a subscript, is regarded as a generic positive constant which may change at different occurrences.
However, we require $c$ is always independent of mesh sizes $h$ and $\tau$.
\section{Second-order schemes}\label{sec.sec}
As our interest mainly focuses on the analysis of time discretization using time-stepping methods, we first formulate the space semidiscrete scheme by the finite element method.
Denote by $\mathfrak{T}_h$ a shape regular, quasi-uniform triangulation of the domain $\Omega$ into $d$-simplexes with a mesh size $h$.
In each simplex $e$, we approximate the solution by a linear polynomial function.
Introduce the space
\begin{equation}\label{Se.1}\begin{split}
V_h=\{\chi_h \in H_0^1(\Omega): \chi_h |_e \text{ is a linear polynomial function},~ e \in \mathfrak{T}_h\}.
\end{split}\end{equation}
Define the $L^2(\Omega)$ projection $P_h:L^2(\Omega)\to V_h$, the Ritz projection $R_h:H_0^1(\Omega)\to V_h$ and the discrete Laplacian $\Delta_h: V_h\to V_h$ as follows,
\begin{equation}\label{Se.2}\begin{split}
(P_h \phi,\chi_h)&=(\phi,\chi_h), \quad\forall \phi \in L^2(\Omega),~ \forall \chi_h \in V_h,
\\
(\nabla R_h \phi,\nabla \chi_h)&=(\nabla \phi,\nabla \chi_h),\quad \forall \phi \in H_0^1(\Omega),~ \forall \chi_h \in V_h,
\\
(\Delta_h \phi_h,\chi_h)&=-(\nabla \phi_h, \nabla \chi_h), \quad \forall \phi_h, \chi_h \in V_h,
\end{split}\end{equation}
where $(\cdot,\cdot)$ is the inner product of $L^2(\Omega)$ space.
Thus, the semidiscrete scheme in space for problem (\ref{I.1}) is to find $u_h \in V_h$ such that for any $\chi_h \in V_h$, there holds
\begin{equation}\label{Se.3}\begin{split}
(\partial_t^\alpha u_h,\chi_h)+(\nabla u_h,\nabla \chi_h)=(f,\chi_h),
\end{split}\end{equation}
with the initial condition \cite{JinLiZh1,Thomee1}
\begin{equation}\label{Se.4}\begin{split}
u_h(0)=v_h:=
\begin{cases}
  P_h v, & \mbox{if } v \in L^2(\Omega), \\
  R_h v, & \mbox{if } v \in D(\Delta).
\end{cases}
\end{split}\end{equation}
By $\Delta_h$ and letting $w_h:=u_h-v_h$, we rewrite (\ref{Se.3}) as
\begin{equation}\label{Se.5}\begin{split}
D_t^\alpha w_h(t)-\Delta_h w_h(t)=f_h(t)+\Delta_h v_h,\quad t>0,
\end{split}\end{equation}
where $f_h:=P_hf$.
\par
To formulate the fully discrete scheme, divide the time interval by grids: $0=t_0<t_1<\cdots<t_N=T$ with $t_k=k\tau$, $\tau=T/N$.
Let $w_h^n=w_h(t_n)$ and assume $W_h^n$ is the approximation to $w_h^n$.
Denote by $\psi^{n-\theta}, D_{\tau}^\alpha \psi^{n-\theta}$ the approximation of $\psi(t_{n-\theta}), D_t^\alpha \psi(t_{n-\theta})$, respectively, as the following
\begin{equation}\label{Se.6}\begin{split}
\psi^{n-\theta}&:=(1-\theta)\psi^n+\theta\psi^{n-1},
\\
D_{\tau}^\alpha \psi^{n-\theta}&:=\tau^{-\alpha}\sum_{k=0}^{n}\omega_k \psi^{n-k},
\end{split}\end{equation}
where the weights $\omega_k$, generally depending on $\alpha$ and $\theta$, are coefficients of a generating function in the form $\omega(\xi)=\sum_{k=0}^{\infty}\omega_k\xi^k$.
Using Taylor expansion, we can obtain $\psi(t_{n-\theta})=\psi^{n-\theta}+O(\tau^2)$ if $\psi$ is smooth.

\begin{lemma}\label{lem.1}(Shifted fractional trapezoidal rule, see \cite{LiuYin3})
Assume $\psi(t)$ is a smooth function satisfying $\psi(0)=0$.
Then, we have
\begin{equation}\label{Se.7}\begin{split}
D_t^\alpha \psi(t_{n-\theta})=D_\tau^\alpha\psi^{n-\theta}+O(\tau^2),
\end{split}\end{equation}
where the weights $(\omega_k)_{k=0}^N$ are defined by the generating function
\begin{equation}\label{Se.8}\begin{split}
\omega(\xi)=\bigg[\frac{1-\xi}{\frac{1}{2}(1+\xi)+\frac{\theta}{\alpha}(1-\xi)}\bigg]^\alpha,
  \quad 0< \theta \leq \frac{1}{2}.
\end{split}\end{equation}
\end{lemma}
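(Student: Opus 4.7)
The plan is to establish the second-order consistency by a generating-function / Laplace-transform analysis, which is the natural framework for analyzing convolution quadratures of Lubich type.

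First, I would represent both sides via the Laplace transform $\hat\psi(s)$. Since $\psi(0)=0$, we have $\widehat{D_t^\alpha\psi}(s)=s^\alpha\hat\psi(s)$, so on a suitable Hankel contour $\Gamma$,
\[
D_t^\alpha\psi(t_{n-\theta})=\frac{1}{2\pi i}\int_{\Gamma}e^{st_{n-\theta}}\,s^\alpha\hat\psi(s)\,ds.
\]
Similarly, writing each $\psi^{n-k}$ as an inverse Laplace transform and exchanging summation with integration, the discrete operator $\tau^{-\alpha}\sum_{k=0}^{n}\omega_k\psi^{n-k}$ can be expressed, up to an exponentially small tail from truncating at $k=n$, as a contour integral with symbol $\tau^{-\alpha}\omega(e^{-s\tau})\,e^{sn\tau}$.

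The crux is then the symbol-matching estimate
\[
\tau^{-\alpha}\omega(e^{-s\tau})=s^\alpha e^{-s\theta\tau}\bigl(1+O((s\tau)^2)\bigr),
\]
which I would verify by setting $x=s\tau$ and Taylor-expanding the bracket inside $\omega(\xi)$. Using $1-e^{-x}=x-x^2/2+x^3/6+\cdots$ together with $\tfrac{1}{2}(1+e^{-x})+\tfrac{\theta}{\alpha}(1-e^{-x})=1+(\tfrac{\theta}{\alpha}-\tfrac{1}{2})x+(\tfrac{1}{4}-\tfrac{\theta}{2\alpha})x^2+\cdots$, one inverts the denominator and multiplies to obtain the ratio $x\,e^{-\theta x/\alpha}\bigl(1+O(x^3)\bigr)$; the particular combination in the denominator is engineered so that the coefficient of $x^2$ in the ratio is exactly $-\theta/\alpha$, matching the series of $e^{-\theta x/\alpha}$. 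Raising to the $\alpha$-th power then yields $\omega(e^{-x})=x^\alpha e^{-\theta x}\bigl(1+O(x^2)\bigr)$, as claimed.

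Substituting this expansion back, the difference between the continuous and discrete contour integrals is bounded by $C\tau^2\int_{\Gamma}|s^{\alpha+2}\hat\psi(s)|\,|ds|$, which is finite for sufficiently smooth $\psi$ with $\psi(0)=0$. The main obstacle will be the careful bookkeeping in the symbol expansion: one must push the series to order $x^3$ to confirm that the $\alpha$-th power produces only an $O(x^2)$ relative error, and one must choose $\Gamma$ (of Talbot/Hankel type) so that $\omega(\xi)$ remains analytic in the region swept out, which is where the restriction $0<\theta\le\tfrac{1}{2}$ enters to guarantee that the denominator $\tfrac{1}{2}(1+\xi)+\tfrac{\theta}{\alpha}(1-\xi)$ stays away from zero.
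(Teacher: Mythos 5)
The paper offers no proof of this lemma (it is quoted from \cite{LiuYin3}), so your proposal has to stand on its own. Your framework — Lubich-type symbol matching on a contour — is the right one, and your central estimate $\tau^{-\alpha}\omega(e^{-s\tau})=s^{\alpha}e^{-\theta s\tau}\bigl(1+O((s\tau)^{2})\bigr)$ is correct. One bookkeeping slip: with $a=\theta/\alpha$ the bracket in the ratio expands as $1-ax+(a^{2}-\tfrac{1}{12})x^{2}+\cdots$ while $e^{-ax}=1-ax+\tfrac{a^{2}}{2}x^{2}+\cdots$, so the ratio is $x e^{-ax}\bigl(1+(\tfrac{a^{2}}{2}-\tfrac{1}{12})x^{2}+O(x^{3})\bigr)$; the relative error is $O(x^{2})$, not $O(x^{3})$ as you assert (only the absolute error is $O(x^{3})$). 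This is harmless, since after raising to the power $\alpha$ you only need, and correctly state, the relative error $O(x^{2})$. Also, the role you assign to $0<\theta\le\tfrac12$ is misplaced: the zero of the denominator sits at $\xi=(\tfrac12+\tfrac{\theta}{\alpha})/(\tfrac{\theta}{\alpha}-\tfrac12)$, which has modulus greater than $1$ for every $\theta>0$, so analyticity on the relevant region is not the issue; the restriction $\theta\le\tfrac12$ is a stability condition (cf.\ Remark 2.2 and Example 3), not a consistency one.

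The genuine gap is the final step. The bound $C\tau^{2}\int_{\Gamma}|s^{\alpha+2}\hat\psi(s)|\,|ds|$ is \emph{not} finite for a general smooth $\psi$ with only $\psi(0)=0$: integrating by parts gives $\hat\psi(s)=\psi'(0)s^{-2}+\psi''(0)s^{-3}+\cdots$, so $s^{\alpha+2}\hat\psi(s)$ decays only like $|s|^{\alpha}$ and $|s|^{\alpha-1}$ along the unbounded branches of $\Gamma$, neither of which is integrable for $\alpha\in(0,1)$. You would need roughly $\psi(0)=\psi'(0)=\psi''(0)=0$ for your integral to converge. The standard repair is to split $\psi$ into its low-order Taylor polynomial at $t=0$ plus a remainder vanishing to third order: the remainder is handled exactly by your contour argument, while the monomials $t$ and $t^{2}$ must be treated directly (e.g.\ by singularity analysis of the coefficients of $\omega(\xi)\xi/(1-\xi)^{2}$), which produces errors of the form $c\,\tau^{2}t_{n}^{-1-\alpha}$, etc. This also clarifies in what sense the lemma's $O(\tau^{2})$ holds — at a fixed positive time, with a constant that blows up as $t_{n}\to0$ — and is precisely the mechanism that forces the initial corrections in (\ref{Se.14}). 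As written, your last sentence claims more than the argument delivers.
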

\begin{rem}\label{rem.2}
We take a note that for $\theta >\frac{1}{2}$, resultant schemes may be unstable, see \textit{Example 3} in Sect.\ref{sec.num}.
If $\theta=0$, then (\ref{Se.8}) actually recovers the generating function of the fractional trapezoidal rule developed in the convolution quadrature (CQ) \cite{Lubich1}.
To efficiently obtain the weights $(\omega_k)_{k=0}^N$, we employ the following recursive formulas \cite{LiuYin3},
\begin{equation}\label{Se.8.1}
\omega_k=
\begin{cases}
  \big(\frac{2\alpha}{\alpha+2\theta}\big)^{\alpha}, & \mbox{if } k=0, \\
  -\alpha\big(\frac{2\alpha}{\alpha+2\theta}\big)^{\alpha+1}, & \mbox{if } k=1, \\
  \frac{2\alpha}{k(\alpha+2\theta)}\big\{\big[\frac{2\theta}{\alpha}(k-1)-\alpha\big]\omega_{k-1}+\frac{\alpha-2\theta}{2\alpha}(k-2)\omega_{k-2}\big\}, & \mbox{if } k \geq 2.
\end{cases}
\end{equation}
\end{rem}
\par
\textbf{Fully discrete scheme without corrections}
\par
By (\ref{Se.6}) and Lemma \ref{lem.1}, we get the fully discrete scheme: Find $W_h^n \in V_h$ such that,
\begin{equation}\label{Se.9}\begin{split}
D_\tau^\alpha W_h^{n-\theta}-\Delta_h W_h^{n-\theta}=f_h^{n-\theta}+\Delta_h v_h,\quad n\geq 1.
\end{split}\end{equation}
However, due to the initial singularity \cite{SakamotoYamamoto,Stynes1} of solutions of (\ref{I.1}), we can only obtain first-order convergence rate at fixed time when $\theta<\frac{1}{2}$, instead of the desired second-order one, see Table \ref{tab1}.
Inspired by the works \cite{JinLiZh1,CuestaLubich}, we shall in the following analysis add initial corrections to the first time step to resolve this problem.
\par
\textbf{Fully discrete scheme with initial corrections}
\par
By adding initial corrections, the fully discrete scheme can be stated as: Find $W_h^n \in V_h$ such that
\begin{equation}\label{Se.14}\begin{split}
D_\tau^\alpha W_h^{1-\theta}-\Delta_h W_h^{1-\theta}&=(3/2-\theta)\Delta_h v_h+\frac{1}{2}f_h^0+(1-\theta)f_h^1,
\\
D_\tau^\alpha W_h^{n-\theta}-\Delta_h W_h^{n-\theta}&=f_h^{n-\theta}+\Delta_h v_h,\quad n\geq 2.
\end{split}\end{equation}
Define $U_h^n=W_h^n+v_h$, we can rewrite (\ref{Se.14}) as
\begin{equation}\label{Se.14.1}\begin{split}
D_\tau^\alpha (U_h-v_h)^{1-\theta}-\Delta_h U_h^{1-\theta}&=(1/2-\theta)(f_h^0+\Delta_h v_h)+f_h^{1-\theta},
\\
D_\tau^\alpha (U_h-v_h)^{n-\theta}-\Delta_h U_h^{n-\theta}&=f_h^{n-\theta},\quad n\geq 2.
\end{split}\end{equation}
\begin{rem}\label{rem.4}
Note that (\ref{Se.9}) reduces to the fractional Crank-Nicolson scheme when $\theta=\frac{\alpha}{2}$ which was studied in \cite{JinLiZh1} where a two-step correction technique is adopted to deal with initial singularities.
Our scheme (\ref{Se.14}) requires only to fix the first step, and thus is easier to implement.
Numerical tests show that both correction techniques are efficient, see \textit{Example 5} in Sect.\ref{sec.num}.
Moreover, by taking a limiting process $\theta\to \frac{1}{2}$, (\ref{Se.14}) becomes exactly the following Crank-Nicolson scheme, without any initial correction.
\end{rem}
\par
\textbf{Crank-Nicolson scheme}
\par
Let $\theta\to\frac{1}{2}$ in (\ref{Se.14}), we obtain
\begin{equation}\label{Se.15}\begin{split}
D_\tau^\alpha W_h^{n-1/2}-\Delta_h W_h^{n-1/2}=f_h^{n-1/2}+\Delta_h v_h,\quad n\geq 1.
\end{split}\end{equation}
One can find the scheme (\ref{Se.15}) is also a special case of (\ref{Se.9}), i.e., the scheme without any initial correction.
Numerical results in Table \ref{tab1} with $\theta=\frac{1}{2}$ show that (\ref{Se.15}) can maintain the optimal convergence rate at fixed time for smooth initial data and source terms.
However, our theoretical analysis for the scheme (\ref{Se.14}) in later sections  exclude the special case $\theta=\frac{1}{2}$, since the theory itself demands some analyticity of functions $\kappa(\xi)$ and $\beta_\tau(\xi)$ (defined in Theorem \ref{thm.6}) at $\xi=-1$ in the complex plane.
See also the Remark \ref{rem.8}.
\section{Stability analysis}\label{sec.sta}
In this section, we give the stability analysis based on the maximal $\ell^p$-regularity and discrete Gr\"{o}nwall type inequality for scheme (\ref{Se.14}).
\par
Some definitions and lemmas are needed for later analysis.
Throughout, we denote by $X$ the space $L^q(\Omega)$ or subspace $V_h$ endowed with the $L^q$ norm, $1<q<\infty$, by $\mathcal{B}(X)$ the set of all bounded linear operators from $X$ to itself.
For given $\sigma \in (0,\pi)$, define the open sector $\Sigma_\sigma:=\{z\in \mathbb{C}:|\arg z|<\sigma, z\neq 0\}$, and the unit circle $\mathbb{D}:=\{z\in \mathbb{C}:|z|=1\}$.
Let $\mathbb{D}':=\mathbb{D}\setminus \{\pm 1\}$.
Define the space $\ell^p(X)$ of sequence $(f^n)_{n=0}^\infty$ with $f^n \in X, n=0,1,\cdots$, such that $\|(f^n)_{n=0}^\infty\|_{\ell(X)}<\infty$, where
\begin{equation}\label{St.1}\begin{split}
\|(f^n)_{n=0}^\infty\|_{\ell^p(X)}:=
\begin{cases}
  \bigg(\tau\displaystyle\sum_{n=0}^{\infty}\|f^n\|_X^p\bigg)^\frac{1}{p}, & \mbox{if } 1\leq p <\infty \\
  \displaystyle\sup_{n\geq 0}\|f^n\|_X, & \mbox{if }p=\infty.
\end{cases}
\end{split}\end{equation}
For a sequence $(f^n)_{n=0}^N$, define $\|(f^n)_{n=0}^N\|_{\ell^p(X)}=\|(f^n)_{n=0}^\infty\|_{\ell^p(X)}$ by setting $f^n=0$ for $n>N$.
\begin{def}\label{def.1}(R-boundedness, see \cite{JinLiZh2})
Let $r_k(s)={\rm sign} \sin(2k\pi s), k=1,2,
\\
\cdots$.
A set of operators $\{M(\lambda):\lambda \in \Lambda\} \subset \mathcal{B}(X)$ is said to be R-bounded if there exists a constant $c>0$ so that for any subset of operators $\{M(\lambda_j)\}_{j=1}^l$, it holds
\begin{equation}\label{St.2}\begin{split}
\int_{0}^{1}\bigg\|\sum_{k=1}^{l}r_k(s)M(\lambda_k)v_k\bigg\|_X\mathrm{d}s
\leq c
\int_{0}^{1}\bigg\|\sum_{k=1}^{l}r_k(s)v_k\bigg\|_X\mathrm{d}s,
\quad \forall v_1,v_2, \cdots, v_l \in X.
\end{split}\end{equation}
\end{def}
The following properties of R-bounded sets are essential to our analysis.
\\
\textbf{Properties}
\begin{itemize}
  \item[(i)] The set $\{\lambda I: \lambda \in \Lambda\}$ is R-bounded;
  \item[(ii)] If $\mathcal{S}, \mathcal{T} \subset \mathcal{B}(X)$ are R-bounded sets, then $\mathcal{S}\cup\mathcal{T}$, $\mathcal{S}+\mathcal{T}$ and $\mathcal{S}\mathcal{T}$ are R-bounded sets.
\end{itemize}
For other aspects of R-boundedness and properties of R-bounded sets, we recommend \cite{JinLiZh2} and the references therein.
\begin{def}\label{def.2}(See \cite{JinLiZh2})
We say an operator $A: D(A)\to X$ is sectorial of angle $\sigma$, provided the following conditions are satisfied,
\begin{itemize}
  \item[{\rm (i)}] $A:D(A)\to X$ is a closed operator and $D(A)$ is dense in $X$;
  \item[{\rm(ii)}] $\sigma(A)\subset \mathbb{C}\setminus\Sigma_\sigma$, where $\sigma(A)$ denotes the spectrum of $A$.
  \item[{\rm(iii)}] The set of operators $\{z(z-A)^{-1}:z\in \Sigma_\sigma\}$ is bounded in $\mathcal{B}(X)$.
\end{itemize}
Further, we say $A$ is R-sectorial of angle $\sigma$ if (i), (ii) and the following condition holds
\\
{\rm(iii')} The set of operators $\{z(z-A)^{-1}:z\in \Sigma_\sigma\}$ is R-bounded in $\mathcal{B}(X)$.
\end{def}
\par
For a sequence $\{M_n\}_{n=0}^\infty$ of operators on $X$, define the generating function
\begin{equation}\label{St.3}\begin{split}
M(\xi):=\sum_{n=0}^{\infty}M_n\xi^n, \quad \forall \xi \in \mathbb{D}'.
\end{split}\end{equation}
\begin{lemma}\label{lem.2}(See \cite{JinLiZh2})
Assume $M:\mathbb{D}'\to \mathcal{B}(X)$ is differentiable such that the set
\begin{equation}\label{St.4}\begin{split}
\{M(\xi):\xi \in \mathbb{D}'\}\cup\{(1-\xi)(1+\xi)M'(\xi):\xi  \in \mathbb{D}'\}
\end{split}\end{equation}
is R-bounded.
Then, we can define a bounded linear operator $\mathcal{M}:\ell^p(X)\to\ell^p(X)$, $1<p<\infty$,
\begin{equation}\label{St.5}\begin{split}
(\mathcal{M}\boldsymbol f)_n:=\sum_{j=0}^{n}M_{n-j}f^j,\quad n=0,1,\cdots,
\quad\text{for any } \boldsymbol f=(f^n)_{n=0}^\infty \in \ell^p(X).
\end{split}\end{equation}
\end{lemma}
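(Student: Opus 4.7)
The statement is a discrete operator-valued Fourier multiplier theorem in disguise: the map $\mathcal{M}$ is nothing but the causal convolution of $X$-valued sequences whose symbol/generating function is $M(\xi)$. My plan is to reduce the problem to an application of the discrete analogue of Weis's $R$-bounded multiplier theorem due to Blunck, after first ensuring the ambient space is UMD.

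First I would check that $X$ is a UMD Banach space. For $X=L^q(\Omega)$ with $1<q<\infty$ this is classical (Burkholder), and the finite-dimensional subspace $V_h\subset L^q(\Omega)$ endowed with the $L^q$ norm inherits the UMD property trivially. This is the structural hypothesis that will let one invoke an operator-valued multiplier result.

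Next I would interpret $\mathcal{M}$ spectrally. Writing $\xi=e^{i\phi}$ on $\mathbb{D}$, the operator-valued function $M(e^{i\phi})=\sum_{n\geq 0}M_n e^{in\phi}$ is the Fourier symbol of the convolution operator $(M_n)_{n\geq 0}$ acting on $\ell^p(\mathbb{Z};X)$ (extend $f$ and the kernel to be zero for negative indices). So proving $\mathcal{M}\in\mathcal{B}(\ell^p(X))$ is equivalent to showing that $M$ is a bounded Fourier multiplier on $\ell^p(\mathbb{Z};X)$, and causality then preserves boundedness on the one-sided index set $n\geq 0$.

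I would then invoke Blunck's discrete multiplier theorem: for $X$ UMD and $1<p<\infty$, if $M:\mathbb{D}'\to\mathcal{B}(X)$ is differentiable and both $\{M(\xi):\xi\in\mathbb{D}'\}$ and $\{(1-\xi)(1+\xi)M'(\xi):\xi\in\mathbb{D}'\}$ are $R$-bounded, then $M$ induces a bounded multiplier on $\ell^p(\mathbb{Z};X)$. The weight $(1-\xi)(1+\xi)$ is the natural discrete analogue of the Mikhlin derivative weight adapted to the two potential singularities of the symbol on the circle, at $\xi=\pm 1$; these are the images of the boundary of the $z$-domain under the bilinear substitution that is implicit in the shifted trapezoidal rule (cf.\ \eqref{Se.8}).

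The most delicate step, were one to give a self-contained argument rather than cite \cite{JinLiZh2}, would be the proof of the multiplier theorem itself: transferring the symbol condition into an $\ell^p(X)$ bound requires Calder\'on--Zygmund/Hilbert-transform machinery on the torus in the vector-valued setting, and it is precisely here that both the UMD hypothesis on $X$ and the $R$-boundedness (as opposed to mere uniform boundedness) of the symbol family are indispensable. Since Lemma \ref{lem.2} is quoted verbatim from \cite{JinLiZh2}, I would appeal to that reference for the multiplier theorem and limit the work to verifying UMD, differentiability of $M$, and the two $R$-boundedness conditions, each of which is immediate from the Properties (i)--(ii) of $R$-bounded sets listed above.
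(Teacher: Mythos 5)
Your proposal is correct and consistent with the paper, which states Lemma \ref{lem.2} without proof as a direct quotation from \cite{JinLiZh2}; the underlying result is indeed Blunck's discrete operator-valued Fourier multiplier theorem on UMD spaces, and your identification of what must be verified (the UMD property of $L^q(\Omega)$ and of $V_h$ with the $L^q$ norm, differentiability of the symbol, the two $R$-boundedness conditions, and the causality argument that lets one pass from $\ell^p(\mathbb{Z};X)$ to one-sided sequences) matches exactly what the cited reference supplies. Since the paper offers no independent argument for this lemma, there is nothing further to compare.
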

\begin{thm}\label{thm.1}(Discrete Gr\"{o}nwall inequality)
Assume $\theta \in (0,\frac{1}{2})$.
For $D_{\tau}^\alpha \psi^{n-\theta}$ defined in (\ref{Se.6}) with weights generated by (\ref{Se.8}), the discrete Gr\"{o}nwall inequality holds:
\par
Let $\alpha \in (0,1)$, $p\in (\frac{1}{\alpha},\infty)$, and a given sequence $(\psi^n)_{n=0}^\infty$ with $\psi^n \in X, n=0,1,2,\cdots$, $\psi^0=0$, satisfy
\begin{equation}\label{St.6}\begin{split}
\|(D_\tau^\alpha \psi^{n-\theta})_{n=1}^m\|_{\ell^p(X)}
\leq \kappa\|(\psi^n)_{n=1}^m\|_{\ell^p(X)}+\eta, \quad
\forall 1\leq m \leq N,
\end{split}\end{equation}
for some positive constants $\kappa$ and $\eta$, then, with sufficiently small $\tau>0$, there holds
\begin{equation}\label{St.7}\begin{split}
\|(\psi^n)_{n=1}^N\|_{\ell^\infty(X)}
+
\|(D_\tau^\alpha \psi^{n-\theta})_{n=1}^N\|_{\ell^p(X)}\leq c\eta,
\end{split}\end{equation}
where the constant $c$ is free of $\tau,N,\eta$, and $\psi^n$, but may depend on $\alpha,p,\kappa,X$ and $T$.
\end{thm}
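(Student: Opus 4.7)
The plan is to invert the discrete shifted fractional operator $D_\tau^\alpha$ through generating functions, establish a pointwise bound on the resulting convolution kernel, apply a discrete Young inequality to turn (\ref{St.6}) into an absorption inequality, and close by partitioning $[0,T]$ into short windows. Set $\phi^n:=D_\tau^\alpha\psi^{n-\theta}$; since $\psi^0=0$, also $\phi^0=0$, and the identity $\sum_n\phi^n\xi^n=\tau^{-\alpha}\omega(\xi)\sum_n\psi^n\xi^n$ inverts to
\begin{equation*}
\psi^n=\tau^\alpha\sum_{k=0}^n\beta_{n-k}\phi^k,\qquad \beta(\xi):=\omega(\xi)^{-1}=\sum_{n\ge0}\beta_n\xi^n,
\end{equation*}
so $\tau^\alpha\beta_n$ plays the role of a discrete fractional integration kernel of order $\alpha$.

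The first technical step is the bound $|\beta_n|\le c(n+1)^{\alpha-1}$. Factor $\beta(\xi)=(1-\xi)^{-\alpha}g(\xi)$ with $g(\xi):=\bigl[\tfrac{1}{2}(1+\xi)+\tfrac{\theta}{\alpha}(1-\xi)\bigr]^\alpha$. The unique zero of the linear bracket sits at $\xi_\ast=-(\alpha+2\theta)/(\alpha-2\theta)$ (or at infinity when $\alpha=2\theta$), and for $\alpha\in(0,1)$, $\theta\in(0,\tfrac12)$ one checks directly that $|\xi_\ast|>1$. Hence $g$ admits an analytic extension to a disk of radius strictly greater than one, so $|g_n|\le c\rho^{-n}$ for some $\rho>1$. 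Convolving this exponentially decaying sequence against the explicit binomial coefficients $\binom{n+\alpha-1}{n}=\Gamma(n+\alpha)/(\Gamma(\alpha)n!)\sim n^{\alpha-1}/\Gamma(\alpha)$ of $(1-\xi)^{-\alpha}$ yields the claimed bound.

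With this kernel estimate in hand, Young's inequality for discrete convolutions gives $\|\psi\|_{\ell^p_m(X)}\le c(m\tau)^\alpha\|\phi\|_{\ell^p_m(X)}$, where $\ell^p_m$ denotes $\ell^p$ restricted to $1\le n\le m$. Substituting into (\ref{St.6}) produces $\|\phi\|_{\ell^p_m}\le c\kappa(m\tau)^\alpha\|\phi\|_{\ell^p_m}+\eta$, which can be absorbed on any window of length $t^\ast$ satisfying $c\kappa(t^\ast)^\alpha\le\tfrac12$, giving $\|\phi\|_{\ell^p_m}\le 2\eta$ there. To propagate the bound to $[0,T]$ I would iterate over $\lceil T/t^\ast\rceil$ consecutive windows: on the $K$-th window, split the convolution into a short-range piece (absorbed as above) and a long-range tail contributed by past indices that is already controlled inductively, producing a discrete recursion $a_K\le c_1\sum_{j<K}a_j+c_2\eta$ for the per-window $\ell^p$ norms, solvable by a standard discrete Gronwall step. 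For the $\ell^\infty$ part, H\"older's inequality applied directly to $\psi^n=\tau^\alpha\sum_k\beta_{n-k}\phi^k$ with dual exponent $p'$ and the constraint $p>1/\alpha$ (so $(\alpha-1)p'>-1$) yields $|\psi^n|\le c(n\tau)^{\alpha-1/p}\|\phi\|_{\ell^p_n}\le cT^{\alpha-1/p}\eta$.

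I expect the propagation step across windows to be the main obstacle: the nonlocality of the discrete fractional integral prevents a naive window-by-window closure, and careful accounting of the long-range past contributions is unavoidable. As a clean alternative I would consider a weighted-norm device $\|\cdot\|_{\ell^p_\sigma}$ with weight $e^{-\sigma n\tau}$; for $\sigma$ large enough this makes $\tau^\alpha\beta\ast\cdot$ contractive uniformly in $m$, after which the weight is removed at the cost of a factor depending on $T$. Either route yields the stated bound with a constant $c=c(\alpha,p,\kappa,X,T)$ that is independent of $\tau$, $N$, $\eta$, and $(\psi^n)$.
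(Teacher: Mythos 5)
Your proposal is correct in substance, but it proves the Gr\"{o}nwall inequality from scratch, whereas the paper does not: the paper's proof is a two-line reduction to the general criterion of Jin--Li--Zhou (Theorem 2.7 of \cite{JinLiZh3}), which only requires verifying the two conditions $|\omega(\xi)|\geq c^{-1}|1-\xi|^\alpha$ and $|(1-\xi)(1+\xi)\omega'(\xi)|\leq c|\omega(\xi)|$ on $\mathbb{D}'$; these are checked by writing $\omega(\xi)=\mu_0\big(\frac{1-\xi}{1+\mu_1\xi}\big)^\alpha$ and bounding the relevant ratios on the unit circle, yielding the explicit, $\alpha$- and $\theta$-uniform constant $c=2e^{e^{-1}}$. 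Your route instead reconstructs what that cited criterion delivers: you invert $\omega$ explicitly as $\omega(\xi)^{-1}=(1-\xi)^{-\alpha}g(\xi)$, observe that the zero $\xi_\ast=-(\alpha+2\theta)/(\alpha-2\theta)$ of the linear factor satisfies $|\xi_\ast|>1$ (indeed $(\alpha+2\theta)^2-(\alpha-2\theta)^2=8\alpha\theta>0$), deduce the kernel decay $|\beta_n|\leq c(n+1)^{\alpha-1}$, and then run the standard Young-inequality absorption on short windows plus a per-window recursion; the $\ell^\infty$ bound via H\"{o}lder with $(\alpha-1)p'>-1$ is exactly where the hypothesis $p>1/\alpha$ enters, matching the paper's use of it. All of your steps check out, including the window propagation you flag as delicate (splitting the convolution at the window boundary and using the full $\ell^1$ bound $\tau^\alpha\sum_{j\le N}|\beta_j|\leq cT^\alpha$ for the tail does close the recursion $a_K\leq c_1\sum_{j<K}a_j+c_2\eta$ over the finitely many windows). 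The trade-off: the paper's verification-plus-citation is shorter and produces a constant in (\ref{St.8}) that is uniform in $\alpha$ and $\theta$, while your argument is self-contained and exploits the special product structure of the SFTR generating function to get the kernel estimate without any contour-integral or resolvent machinery; your constants depend on $\theta$ through the distance $|\xi_\ast|-1$, which is acceptable under the stated hypotheses but degenerates as $\theta\to 0$ or $\alpha\to 0$.
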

\begin{proof}
According to the general criterion for discrete fractional Gr\"{o}nwall inequality (Theorem 2.7, \cite{JinLiZh3}), we only need to show that there exists a positive constant $c$ such that the generating function $\omega(\xi)$ satisfies
\begin{equation}\label{St.8}\begin{split}
|\omega(\xi)|\geq \frac{1}{c}|1-\xi|^\alpha \quad \text{and}\quad
|(1-\xi)(1+\xi)\omega'(\xi)|\leq c|\omega(\xi)|, \quad\forall \xi \in \mathbb{D}'.
\end{split}\end{equation}
Actually, for $\omega(\xi)$ defined in (\ref{Se.8}), we can take $c$ as
 \begin{equation}\label{St.9}\begin{split}
c=\sup_{\xi \in \mathbb{D}'}\bigg\{\frac{|1-\xi|^\alpha}{|\omega(\xi)|}, \frac{|(1-\xi)(1+\xi)\omega'(\xi)|}{|\omega(\xi)|}\bigg\}.
\end{split}\end{equation}
Next, we show $c$ is bounded.
Rewrite $\omega(\xi)$ as
\begin{equation}\label{St.9.1}\begin{split}
\omega(\xi)=\mu_0\bigg(\frac{1-\xi}{1+\mu_1\xi}\bigg)^\alpha,
\end{split}\end{equation}
where $\mu_0:=\big(\frac{2\alpha}{\alpha+2\theta}\big)^\alpha$,
$\mu_1:=\frac{\alpha-2\theta}{\alpha+2\theta} \in (-1,1)$.
By letting $\xi=e^{{\rm i}\gamma}$, $\gamma \in (0,2\pi)\setminus\{\pi\}$, ${\rm i}=\sqrt{-1}$, we have
\begin{equation}\label{St.10}\begin{split}
\frac{|1-\xi|^\alpha}{|\omega(\xi)|}
&=\frac{1}{\mu_0}|1+\mu_1 e^{{\rm i}\gamma}|^\alpha
<\frac{1}{\mu_0}\max\{|1+\mu_1|^\alpha,|1-\mu_1|^\alpha\}
<\frac{2}{\mu_0},
\\
\frac{|(1-\xi)(1+\xi)\omega'(\xi)|}{|\omega(\xi)|}
&=\alpha(1+\mu_1)\frac{|1+\xi|}{|1+\mu_1\xi|}
=\alpha(1+\mu_1)\sqrt{\frac{4\cos^2\frac{\gamma}{2}}{(1-\mu_1)^2+4\mu_1\cos^2\frac{\gamma}{2}}}
\\&<
(1+\mu_1)\sqrt{\frac{4}{(1-\mu_1)^2+4\mu_1}}=2.
\end{split}\end{equation}
Note that $\mu_0>\big(\frac{2\alpha}{\alpha+1}\big)^\alpha>\alpha^\alpha\geq e^{-e^{-1}}$.
Thus, taking $c=2e^{e^{-1}}$ completes the proof of the theorem.
\end{proof}
\begin{lemma}\label{lem.3}
Let $\varrho(\xi)=\frac{\omega(\xi)}{1-\theta+\theta\xi}$ with $\theta \in (0,\frac{1}{2})$, then there exists a $\vartheta \in (0,\frac{\pi}{2})$ that depends on $\alpha$ and $\theta$, such that $\varrho(\xi)\in \Sigma_{\vartheta}$ for $\xi \in \mathbb{D}'$.
\end{lemma}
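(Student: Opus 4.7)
My plan is to decompose $\varrho$ into elementary factors, bound each factor's argument by a strictly-less-than-$\pi/2$ geometric quantity, and then upgrade these pointwise bounds to a uniform bound on $\mathbb{D}'$ by compactness. First, using the M\"obius-type rewriting employed in the proof of Theorem~\ref{thm.1}, I would write $\omega(\xi)=\mu_0\delta(\xi)^\alpha$ with $\delta(\xi)=\frac{1-\xi}{1+\mu_1\xi}$, $\mu_0>0$, and $\mu_1=\frac{\alpha-2\theta}{\alpha+2\theta}\in(-1,1)$. A direct computation yields
\[
\mathrm{Re}\,\delta(e^{i\gamma})=\frac{(1-\mu_1)(1-\cos\gamma)}{|1+\mu_1 e^{i\gamma}|^2}\ge 0,
\]
with equality only at $\gamma=0$. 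Thus $\delta$ maps the unit circle into a disc in the closed right half-plane tangent to the imaginary axis at $0$, which gives $|\arg\delta(\xi)|<\pi/2$ strictly on $\mathbb{D}'$ and hence $|\arg\omega(\xi)|=\alpha|\arg\delta(\xi)|<\alpha\pi/2$.

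Next, since $\theta\in(0,\tfrac12)$, the map $\xi\mapsto 1-\theta+\theta\xi$ sends the unit circle into the disc centered at $1-\theta$ of radius $\theta<1-\theta$, a region lying in the open right half-plane; this yields $|\arg(1-\theta+\theta\xi)|\le\arcsin\tfrac{\theta}{1-\theta}<\pi/2$. Both factors of $\varrho$ being nonzero on $\mathbb{D}'$, we have $\arg\varrho(\xi)=\arg\omega(\xi)-\arg(1-\theta+\theta\xi)$. Using the symmetry $\varrho(\bar\xi)=\overline{\varrho(\xi)}$, it suffices to bound $|\arg\varrho(e^{i\gamma})|$ for $\gamma\in(0,\pi)$; the function $\gamma\mapsto\arg\varrho(e^{i\gamma})$ extends continuously to $[0,\pi]$ with boundary values $-\alpha\pi/2$ as $\gamma\to 0^+$ (since $\varrho$ vanishes along the direction of $(1-\xi)^\alpha$ times a positive real) and $0$ at $\gamma=\pi$ (since $\varrho(-1)>0$), both strictly inside $(-\pi/2,\pi/2)$ because $\alpha<1$.

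The main obstacle is ruling out that $|\arg\varrho|$ reaches $\pi/2$ at some interior $\gamma\in(0,\pi)$, since the triangle estimate $|\arg\varrho|\le\alpha\pi/2+\arcsin\tfrac{\theta}{1-\theta}$ is too crude and may exceed $\pi/2$ when $\theta$ is close to $\tfrac12$. To handle this, I would exploit the fact that $|\arg\omega|$ is close to its supremum only near $\xi=1$, where $|\arg(1-\theta+\theta\xi)|$ is small, while $|\arg(1-\theta+\theta\xi)|$ attains its maximum at $\cos\gamma=-\theta/(1-\theta)$, at which $|\arg\omega|$ is strictly below $\alpha\pi/2$. Quantifying this trade-off via the explicit formula
\[
\arg\varrho(e^{i\gamma})=\tfrac{\alpha(\gamma-\pi)}{2}-\alpha\arg(1+\mu_1 e^{i\gamma})-\arg(1-\theta+\theta e^{i\gamma}),
\]
together with a derivative analysis---the algebraic identity $\tfrac{\alpha\mu_1}{1+\mu_1}=\tfrac{\alpha-2\theta}{2}$ implying in particular that $\partial_\gamma\arg\varrho|_{\gamma=0^+}=0$---yields a uniform bound $\vartheta=\vartheta(\alpha,\theta)<\pi/2$. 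By continuity and compactness of $[0,\pi]$ the supremum of $|\arg\varrho|$ is attained and is strictly smaller than $\pi/2$, which is the required angle.
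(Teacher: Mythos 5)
Your reduction is sound and most of your supporting computations are correct: the factorization $\omega(\xi)=\mu_0\bigl(\frac{1-\xi}{1+\mu_1\xi}\bigr)^\alpha$, the resulting bound $|\arg\omega(\xi)|<\alpha\pi/2$ on $\mathbb{D}'$, the bound $|\arg(1-\theta+\theta\xi)|\le\arcsin\frac{\theta}{1-\theta}$, the boundary limits $-\alpha\pi/2$ at $\gamma\to0^+$ and $0$ at $\gamma=\pi$, the identity $\frac{\alpha\mu_1}{1+\mu_1}=\frac{\alpha-2\theta}{2}$ and the consequence $\partial_\gamma\arg\varrho(e^{{\rm i}\gamma})|_{\gamma=0^+}=0$ all check out, and you correctly diagnose that the naive triangle bound $\alpha\pi/2+\arcsin\frac{\theta}{1-\theta}$ can exceed $\pi/2$.

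The difficulty is that the step which actually proves the lemma --- showing $\arg\varrho(e^{{\rm i}\gamma})>-\pi/2$ at every interior $\gamma\in(0,\pi)$ --- is announced rather than carried out, and the two concrete facts you offer in its place do not close it. Knowing that $\beta_2:=\arg(1-\theta+\theta e^{{\rm i}\gamma})$ peaks at $\cos\gamma=-\theta/(1-\theta)$ while $|\arg\omega|$ is strictly below $\alpha\pi/2$ there controls only that single point, and the vanishing derivative at $\gamma=0$ does not preclude $\arg\varrho$ from dipping below $-\pi/2$ somewhere in between. The trade-off is in fact exactly tight: in the limit $\alpha\to1$, $\theta\to\frac12$ one has $\mu_1\to0$ and $\arg\varrho(e^{{\rm i}\gamma})\to\frac{\gamma-\pi}{2}-\frac{\gamma}{2}=-\frac{\pi}{2}$ \emph{identically in} $\gamma$, so no pointwise or single-critical-point heuristic can yield the strict inequality; a global quantitative argument is unavoidable. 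The paper supplies one by analyzing monotonicity in $\alpha$ rather than in $\gamma$: writing $\Theta(\gamma,\theta,\alpha):=\arg\varrho(e^{{\rm i}\gamma})=\frac{\alpha(\gamma-\pi)}{2}-\alpha\beta_1-\beta_2$, it shows $\partial_\alpha^2\Theta<0$ and $\partial_\alpha\Theta|_{\alpha=0}=0$, hence $\Theta$ is strictly decreasing in $\alpha$, so $\Theta(\gamma,\theta,1)<\Theta(\gamma,\theta,\alpha)<\Theta(\gamma,\theta,0)$; the upper bound is negative since $\partial_\theta\Theta(\gamma,\theta,0)<0$ and $\Theta(\gamma,0,0)=0$, and the lower bound exceeds $-\pi/2$ since $\partial_\gamma\Theta(\gamma,\theta,1)>0$ and $\Theta(0,\theta,1)=-\pi/2$. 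You would need to reproduce this (or an equivalent) monotonicity argument to make your ``trade-off'' rigorous; as written, the central inequality of the lemma is missing.
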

\begin{proof}
Let $\xi=e^{{\rm i}\gamma}$, $\gamma \in (0,\pi)\cup(\pi,2\pi)$.
By (\ref{St.9.1}), we can get
\begin{equation}\label{St.10.1}\begin{split}
\varrho(\xi)=|\varrho(\xi)|e^{{\rm i}\Theta(\gamma,\theta,\alpha)},
\end{split}\end{equation}
where
\begin{equation}\label{St.10.2}\begin{split}
|\varrho(\xi)|&=\mu_0\bigg|2\sin\frac{\gamma}{2}\bigg|^\alpha
(1+\mu_1^2+2\mu_1\cos \gamma)^{-\frac{\alpha}{2}}
\big[(1-\theta+\theta\cos\gamma)^2+\theta^2\sin^2\gamma\big]^{-\frac{1}{2}},
\\
\Theta(\gamma,\theta,\alpha)&:=\frac{\alpha}{2}(\gamma-\pi)-\alpha\beta_1-\beta_2,
\quad
\beta_1:=\arctan\frac{\mu_1\sin\gamma}{1+\mu_1\cos\gamma},
\\
\beta_2&:=\arctan\frac{\theta\sin\gamma}{1-\theta+\theta\cos\gamma}.
\end{split}\end{equation}
Since $\Theta(\gamma,\theta,\alpha)=-\Theta(2\pi-\gamma,\theta,\alpha)$, we can limit the choice of $\gamma \in (0,\pi)$.
A careful derivation shows
\begin{equation}\label{St.10.3}\begin{split}
\frac{\partial \Theta}{\partial \alpha}(\gamma,\theta,\alpha)&
=\frac{1}{2}(\gamma-\pi)-\arctan\bigg(\frac{(\alpha-2\theta)\sin\gamma}{\alpha+2\theta+(\alpha-2\theta)\cos\gamma}\bigg)
\\&\quad-\frac{2\alpha\theta\sin\gamma}{\alpha^2+4\theta^2+(\alpha^2-4\theta^2)\cos\gamma},
\\
\frac{\partial^2 \Theta}{\partial \alpha^2}(\gamma,\theta,\alpha)&
=-\frac{16\theta^3\sin\gamma(1-\cos \gamma)}{\big(\alpha^2+4\theta^2+\alpha^2\cos\gamma-4\theta^2\cos\gamma\big)^2}<0,
\end{split}\end{equation}
and then,
\begin{equation}\label{St.10.4}\begin{split}
\frac{\partial \Theta}{\partial \alpha}(\gamma,\theta,\alpha)
<
\frac{\partial \Theta}{\partial \alpha}(\gamma,\theta,0)=0,
\end{split}\end{equation}
Therefore, we have
\begin{equation}\label{St.10.5}\begin{split}
\Theta(\gamma,\theta,\alpha)\in \big(\Theta(\gamma,\theta,1),\Theta(\gamma,\theta,0)\big).
\end{split}\end{equation}
Note that $\Theta_\theta(\gamma,\theta,0)=-\sin\gamma/\big(1+4\theta(\theta-1)\sin^2(\gamma/2)\big)<0$, as $4\theta(\theta-1) \in (-1,0)$.
Then, we have $\Theta(\gamma,\theta,\alpha)<\Theta(\gamma,\theta,0)<\Theta(\gamma,0,0)=0$.
Further, by a similar analysis, we can show that $\Theta_\gamma(\gamma,\theta,1)>0$, then, it holds
$\Theta(\gamma,\theta,\alpha)>\Theta(0,\theta,1)=-\pi/2$.
For $\gamma \in (0,2\pi)\setminus\{\pi\}$, we thus have $\Theta(\gamma,\theta,\alpha)\in (-\pi/2,\pi/2)$, which completes the proof of the lemma.
\end{proof}
\begin{rem}\label{rem.6}
One can check $\varrho(-1)>0$, and further, for any $\xi \in \{|\xi|\leq 1\}\setminus\{1\}$, there holds $\varrho(\xi) \in \Sigma_\vartheta$.
Actually, by setting $\xi=\delta e^{{\rm i}\gamma}$ with $\delta \in (0,1]$, after a tedious calculation, one can prove that for any fixed $\alpha,\theta$ and $\gamma \in (0,\pi)$, the extreme value of $\Theta$ is always obtained at a point $\xi$ with $|\xi|=1$.
Therefore, we can say $\varrho(\xi)\in \Sigma_\vartheta$ for $|\xi|\leq 1$ and $\xi\neq 1$.
\end{rem}
\begin{thm}\label{thm.2}Assume $v\equiv 0$, $U_h:=W_h$.
For $0<\alpha<1$, $\theta \in (0,\frac{1}{2})$ and sufficiently small $\tau>0$,  the scheme (\ref{Se.14}) is stable with the estimate
\begin{equation}\label{St.11}\begin{split}
\|(U_h^n)_{n=1}^N\|_{\ell^\infty(X)}
+
\|(D_\tau^\alpha U_h^{n-\theta})_{n=1}^N\|_{\ell^p(X)}
\leq c \|(f_h^n)_{n=0}^N\|_{\ell^p(X)}, \quad 1<p<\infty,
\end{split}\end{equation}
where the constant $c$ is independent of $\tau,N,f_h^n$ and $U_h^n$.
\end{thm}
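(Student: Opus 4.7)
My plan is to invoke the discrete maximal $\ell^p$-regularity framework: reduce stability to an operator-valued Fourier multiplier bound on the unit circle via Lemma \ref{lem.2}, and then promote the resulting $\ell^p(X)$ control of $D_\tau^\alpha W_h^{n-\theta}$ to the $\ell^\infty(X)$ bound on $W_h^n$ by invoking Theorem \ref{thm.1}.

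Because $v\equiv 0$ forces $v_h=0$ and hence $W_h^0=0$, I would first absorb the initial correction into a single formula by rewriting (\ref{Se.14}) as $D_\tau^\alpha W_h^{n-\theta} - \Delta_h W_h^{n-\theta} = g_h^n$ for all $n\geq 1$, where $g_h^1 = f_h^{1-\theta}+(\tfrac12-\theta)f_h^0$ and $g_h^n=f_h^{n-\theta}$ for $n\geq 2$; clearly $\|(g_h^n)\|_{\ell^p(X)} \leq c\|(f_h^n)_{n=0}^N\|_{\ell^p(X)}$. Taking generating functions and using the convolution identity for $D_\tau^\alpha$ together with $W_h^0=0$ converts the scheme into $[\tau^{-\alpha}\omega(\xi) - (1-\theta+\theta\xi)\Delta_h]\widetilde W(\xi) = \widetilde G(\xi)$. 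Factoring out $(1-\theta+\theta\xi)$ and setting $\lambda(\xi):=\tau^{-\alpha}\varrho(\xi)$, the generating function of $D_\tau^\alpha W_h^{n-\theta}$ equals $M(\xi)\widetilde G(\xi)$ with the operator-valued symbol
\[
M(\xi) := \lambda(\xi)\bigl(\lambda(\xi) - \Delta_h\bigr)^{-1}.
\]

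The technical core is verifying the hypothesis of Lemma \ref{lem.2} for $M$. Lemma \ref{lem.3} places $\lambda(\xi)$ inside a fixed sector $\Sigma_\vartheta$ with $\vartheta<\pi/2$, uniformly in $\tau>0$ and $\xi\in\mathbb{D}'$; combined with the R-sectoriality of $-\Delta_h$ on $X$ of any angle less than $\pi$ (which is uniform in $h$ for standard FEM subspaces, see \cite{JinLiZh2}), this yields the R-boundedness of $\{M(\xi):\xi\in\mathbb{D}'\}$ at once. For the derivative, a direct computation gives $M'(\xi) = (\lambda'/\lambda)(\xi)\,M(\xi)(I - M(\xi))$, so by closure of R-bounded families under sums and products, it suffices to show that $(1-\xi)(1+\xi)\lambda'(\xi)/\lambda(\xi)$ is uniformly bounded on $\mathbb{D}'$. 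Splitting $\lambda'/\lambda = \omega'/\omega - \theta/(1-\theta+\theta\xi)$, the $\omega$-piece is controlled by the estimate $|(1-\xi)(1+\xi)\omega'(\xi)/\omega(\xi)|\leq c$ already proved inside Theorem \ref{thm.1}, and the remaining piece is bounded because $|1-\theta+\theta\xi|\geq 1-2\theta>0$ on the closed unit disk for $\theta\in(0,\tfrac12)$.

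Once Lemma \ref{lem.2} delivers $\|(D_\tau^\alpha W_h^{n-\theta})\|_{\ell^p(X)} \leq c\|(g_h^n)\|_{\ell^p(X)} \leq c\|(f_h^n)\|_{\ell^p(X)}$, the $\ell^\infty(X)$ part of (\ref{St.11}) follows from Theorem \ref{thm.1} applied with $\kappa=0$ and $\eta$ equal to the right-hand side of this inequality. The main obstacle is the R-boundedness of the multiplier family: verifying both the sectorial location of $\lambda(\xi)$ and the derivative bound requires the analytic properties of $\omega(\xi)$ from Theorem \ref{thm.1} together with the lower bound $|1-\theta+\theta\xi|\geq 1-2\theta$, and it is precisely the restriction $\theta\in(0,\tfrac12)$ that keeps both ingredients alive — at $\theta=\tfrac12$ the denominator $1-\theta+\theta\xi$ vanishes at $\xi=-1$ and the whole framework degenerates, in agreement with Remark \ref{rem.4}.
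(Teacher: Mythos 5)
Your proposal is correct and follows essentially the same route as the paper: generating functions reduce the scheme to the multiplier $M(\xi)=\tau^{-\alpha}\varrho(\xi)\big(\tau^{-\alpha}\varrho(\xi)-\Delta_h\big)^{-1}$, whose R-boundedness comes from Lemma \ref{lem.3} plus the R-sectoriality of $\Delta_h$, the derivative bound comes from $M'=(\lambda'/\lambda)M(I-M)$ together with the estimate $|(1-\xi)(1+\xi)\omega'/\omega|\leq c$ and $|1-\theta+\theta\xi|\geq 1-2\theta$, and the $\ell^\infty$ bound then follows from Lemma \ref{lem.2} and Theorem \ref{thm.1}. Your identity $M'=(\lambda'/\lambda)M(I-M)$ is exactly the paper's $M'=-AM+AM^2$ with $A=-\lambda'/\lambda$, so no substantive difference.
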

\begin{proof}
We first prove the following maximal $\ell^p$-regularity for the scheme (\ref{Se.14}),
\begin{equation}\label{St.12}\begin{split}
\|(D_\tau^\alpha U_h^{n-\theta})_{n=1}^m\|_{\ell^p(X)}\leq c\|(f^n_h)_{n=0}^N\|_{\ell^p(X)}, \quad \forall 1\leq m \leq N, \quad 1<p<\infty,
\end{split}\end{equation}
where the constant $c$ is independent of $\tau,N,f_h^n$ and $U_h^n$.
Then, (\ref{St.11}) is a direct result of Theorem \ref{thm.1} by choosing $p$ so that $\alpha p>1$.
\par
Multiplying both sides of (\ref{Se.14}) by $\xi^n$ and summing the index from $1$ to $\infty$, we get
\begin{equation}\label{St.13}\begin{split}
&\sum_{n=1}^{\infty}\xi^n D_\tau^\alpha U_h^{n-\theta}
-(1-\theta)\sum_{n=1}^{\infty}\xi^n\Delta_h U_h^{n}
-\theta\sum_{n=1}^{\infty}\xi^n\Delta_h U_h^{n-1}
\\
=&(1-\theta)\sum_{n=1}^{\infty}\xi^nf_h^{n}+\theta\sum_{n=1}^{\infty}\xi^nf_h^{n-1}+(1/2-\theta)\xi f_h^0.
\end{split}\end{equation}
Let $U_h(\xi):=\sum_{n=0}^{\infty}U_h^n\xi^n$, $f_h(\xi):=\sum_{n=0}^{\infty}f_h^n\xi^n$, and recall the fact $U_h^0=0$, we can obtain
\begin{equation}\label{St.14}\begin{split}
\sum_{n=1}^{\infty}\xi^n D_\tau^\alpha U_h^{n-\theta}
&=\tau^{-\alpha}\sum_{n=1}^{\infty}\xi^n\sum_{k=0}^{n}\omega_k U_h^{n-k}
=\tau^{-\alpha}\sum_{n=0}^{\infty}\xi^n\sum_{k=0}^{n}\omega_k U_h^{n-k}
\\
&=\tau^{-\alpha}\omega(\xi)U_h(\xi),
\\
(1-\theta)\sum_{n=1}^{\infty}\xi^n\Delta_h U_h^{n}
+\theta\sum_{n=1}^{\infty}\xi^n&\Delta_h U_h^{n-1}
=(1-\theta)\sum_{n=0}^{\infty}\xi^n\Delta_h U_h^{n}
+\theta\xi\sum_{n=0}^{\infty}\xi^n\Delta_h U_h^{n}
\\
&=(1-\theta+\theta\xi)\Delta_hU_h(\xi),
\\
(1-\theta)\sum_{n=1}^{\infty}\xi^nf_h^{n}+\theta\sum_{n=1}^{\infty}\xi^nf_h^{n-1}
&=(1-\theta)\sum_{n=0}^{\infty}\xi^nf_h^{n}+\theta\xi\sum_{n=0}^{\infty}\xi^nf_h^{n}-(1-\theta)f_h^0
\\&=(1-\theta+\theta\xi)f_h(\xi)-(1-\theta)f_h^0.
\end{split}\end{equation}
Combining (\ref{St.13}) with (\ref{St.14}), we have
\begin{equation}\label{St.15}\begin{split}
\big(\tau^{-\alpha}\varrho(\xi)-\Delta_h\big)U_h(\xi)=f_h(\xi)
+\frac{(1/2-\theta)\xi-(1-\theta)}{1-\theta+\theta\xi}f_h^0,
\end{split}\end{equation}
where $\varrho(\xi)=\frac{\omega(\xi)}{1-\theta+\theta\xi}$.
Since the discrete Laplacian $\Delta_h$ generates an analytic semigroup on $V_h \subset L^2(\Omega)$ \cite{Thomee1}, with the spectrum $\sigma(\Delta_h)$ lies in the negative part of the real line, one can obtain $\Delta_h$ is R-sectorial of angle $\vartheta$\cite{AkrivisLi,JinLiZh2}, where $\vartheta$ is defined in Lemma {\ref{lem.3}}.
Together with $\tau^{-\alpha}\varrho(\xi) \in \Sigma_{\vartheta}$ for $\xi \in \mathbb{D}'$, we know $\tau^{-\alpha}\varrho(\xi)-\Delta_h$ is invertible, which leads to
\begin{equation}\label{St.16}\begin{split}
\sum_{n=1}^{\infty}\xi^n D_\tau^\alpha U_h^{n-\theta}
&=\tau^{-\alpha}\omega(\xi)(\tau^{-\alpha}\varrho(\xi)-\Delta_h)^{-1}\bigg(f_h(\xi)+\frac{(1/2-\theta)\xi-(1-\theta)}{1-\theta+\theta\xi}f_h^0\bigg)
\\&=M(\xi)\big((1-\theta+\theta\xi)f_h(\xi)+[(1/2-\theta)\xi-(1-\theta)] f_h^0\big),
\end{split}\end{equation}
where $M(\xi):=\tau^{-\alpha}\varrho(\xi)(\tau^{-\alpha}\varrho(\xi)-\Delta_h)^{-1}$ is differentiable and R-bounded for $\xi\in\mathbb{D}'$.
By some tedious calculations, we can obtain
\begin{equation}\label{St.17}\begin{split}
M'(\xi)=-A(\xi)M(\xi)+A(\xi)M^2(\xi),
\end{split}\end{equation}
where $A(\xi):=\frac{\alpha(1+\mu_1)}{(1-\xi)(1+\mu_1\xi)}+\frac{\theta}{1-\theta+\theta\xi}$.
Using the \textbf{Properties} of R-boundedness, we know that $(1-\xi)(1+\xi)M'(\xi)$ is R-bounded.
Then, by Lemma \ref{lem.2} and (\ref{St.16}) we have, for $1\leq m \leq N$,
\begin{equation}\label{St.18}\begin{split}
\|(D_\tau^\alpha U_h^{n-\theta})_{n=1}^m\|_{\ell^p(X)}
\leq \|(D_\tau^\alpha U_h^{n-\theta})_{n=1}^\infty\|_{\ell^p(X)}
\leq c\|(\tilde{f}^n_h)_{n=1}^\infty\|_{\ell^p(X)}
=c\|(\tilde{f}^n_h)_{n=1}^N\|_{\ell^p(X)},
\end{split}\end{equation}
where $\tilde{f}_h(\xi):=(1-\theta+\theta\xi)f_h(\xi)+[(1/2-\theta)\xi-(1-\theta)] f_h^0$, which means
\begin{equation}\label{St.19}\begin{split}
\tilde{f}_h^n=
\begin{cases}
  0, & n=0, \\
  (1-\theta)f_h^1+\frac{1}{2} f_h^{0}, & n= 1,\\
  (1-\theta)f_h^n+\theta f_h^{n-1}, & n\geq 2.
\end{cases}
\end{split}\end{equation}
Hence, we have $\|(\tilde{f}^n_h)_{n=1}^N\|_{\ell^p(X)}\leq c \|(f^n_h)_{n=0}^N\|_{\ell^p(X)}$.
By combining the estimate (\ref{St.18}), we complete the proof of the theorem.
\end{proof}
\begin{rem}\label{rem.5}
For the case $f\equiv 0$ with $n\geq 2$, stability estimates can be derived by using the discrete Gr\"{o}nwall type inequality (Theorem \ref{thm.1}) after proving a maximal $\ell^p$-regularity following the same process as Theorem 11 in \cite{JinLiZh2}, which is omitted here for space reasons.
\end{rem}
\section{Sharp error estimates}\label{sec.sha}
In this section, we develop sharp error estimates with respect to the smoothness of the initial data and source term for the scheme (\ref{Se.14}).
Key tools include the Laplace transform and its discrete analogy as well as some analysis for kernel functions.
Throughout this section, we pay special attention on dependence of constants $c$ on the fractional order $\alpha$, aiming to develop $\alpha$-robust (see \cite{ChenStynes}) error estimates for our scheme.
The main results are the following theorems.
\begin{thm}\label{thm.4}
Let $\alpha \in (\alpha_0,1)$ for some $\alpha_0>0$, and $\theta \in (0,1/2)$.
Suppose $u_h(t_n)$ and $U_h^n$ are solutions of (\ref{Se.3}) and (\ref{Se.14.1}), respectively.
For $f\equiv 0$, $n\geq 1$, with sufficiently small $\tau$ and $v_h$ defined by (\ref{Se.4}), we have
\begin{equation}\label{Sh.0.1}\begin{split}
\|u_h(t_n)-U_h^n\|_{L^2(\Omega)}\leq
\begin{cases}
  c t_n^{\alpha-2}\tau^2\|\Delta v\|_{L^2(\Omega)}, & \mbox{if }  v \in D(\Delta),\\
  ct_n^{-2}\tau^2\|v\|_{L^2(\Omega)}, & \mbox{if } v\in L^2(\Omega),
\end{cases}
\end{split}\end{equation}
where $c$ is independent of $\alpha,\tau, n, N$ and $v$, but may depend on $\alpha_0$ and $\theta$.
\end{thm}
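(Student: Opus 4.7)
The plan is to prove Theorem \ref{thm.4} through the standard Laplace-transform / generating-function duality, representing both $w_h(t_n)=u_h(t_n)-v_h$ and $W_h^n$ as contour integrals and reducing the error to a single integral of the difference of two resolvent kernels against $\Delta_h v_h$. Taking Laplace transforms in (\ref{Se.5}) with $f\equiv 0$ yields the continuous representation
\begin{equation*}
w_h(t_n)=\frac{1}{2\pi {\rm i}}\int_\Gamma e^{zt_n}\,z^{-1}(z^\alpha-\Delta_h)^{-1}\Delta_h v_h\,\mathrm{d}z,
\end{equation*}
on a Hankel contour $\Gamma$ lying in the sector of analyticity of $(z^\alpha-\Delta_h)^{-1}$. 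On the discrete side, I would repeat the generating-function manipulations used in the proof of Theorem \ref{thm.2}, applied to (\ref{Se.14}) with $f\equiv 0$: summing $\xi^n$ against the scheme and using $W_h^0=0$ gives
\begin{equation*}
\bigl(\tau^{-\alpha}\varrho(\xi)-\Delta_h\bigr)W_h(\xi)=\frac{\beta_\tau(\xi)}{1-\theta+\theta\xi}\,\Delta_h v_h,\qquad \beta_\tau(\xi)=\frac{\xi}{1-\xi}+(1/2-\theta)\xi,
\end{equation*}
where the second term in $\beta_\tau$ encodes the single-step correction. Setting $\xi=e^{-z\tau}$ and inverting via $W_h^n=\frac{1}{2\pi {\rm i}\tau}\int_{\Gamma_\tau}e^{zt_n}W_h(e^{-z\tau})\,\mathrm{d}z$ on a deformed contour $\Gamma_\tau$ produces a representation matching the continuous one, with kernel $K_\tau(z):=\tau^{-1}\beta_\tau(e^{-z\tau})(1-\theta+\theta e^{-z\tau})^{-1}(\tau^{-\alpha}\varrho(e^{-z\tau})-\Delta_h)^{-1}$.

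The error is then
\begin{equation*}
u_h(t_n)-U_h^n=w_h(t_n)-W_h^n=\frac{1}{2\pi {\rm i}}\int_{\Gamma_\tau}e^{zt_n}\bigl(K(z)-K_\tau(z)\bigr)\Delta_h v_h\,\mathrm{d}z,
\end{equation*}
after deforming $\Gamma$ onto $\Gamma_\tau$, which is admissible by Lemma \ref{lem.3} and Remark \ref{rem.6} (these guarantee $\tau^{-\alpha}\varrho(e^{-z\tau})$ stays in a sector $\Sigma_\vartheta$ avoiding $\sigma(\Delta_h)$). Next I would prove two sharp kernel estimates that are the analytic heart of the argument: (i) $\tau^{-\alpha}\varrho(e^{-z\tau})=z^\alpha\bigl(1+O(|z\tau|^2)\bigr)$, with constant uniform for $\alpha\in(\alpha_0,1)$; and (ii) the correction-carrying factor satisfies $\tau^{-1}\beta_\tau(e^{-z\tau})/(1-\theta+\theta e^{-z\tau})=z^{-1}+O(\tau^2)$ near $z=0$, the $(1/2-\theta)\xi$ term being exactly what is needed to cancel the first-order defect introduced by the shift. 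Combining (i) and (ii) with the resolvent bound $\|(z^\alpha-\Delta_h)^{-1}\|_{\mathcal{B}(L^2)}\leq c|z|^{-\alpha}$ on $\Sigma_\vartheta$ yields, after an elementary algebraic identity,
\begin{equation*}
\|K(z)-K_\tau(z)\|_{\mathcal{B}(L^2)}\leq c\,\tau^2|z|^{1-\alpha},\qquad z\in\Gamma_\tau.
\end{equation*}

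To close the argument I would deform $\Gamma_\tau$ to the classical contour $|z|=1/t_n$ joined to two rays in $\Sigma_\vartheta$, bounding $|e^{zt_n}|$ uniformly and integrating $\tau^2|z|^{1-\alpha}$ times the length element. This delivers $\|u_h(t_n)-U_h^n\|_{L^2}\leq c\tau^2 t_n^{\alpha-2}\|\Delta v\|_{L^2}$, settling the smooth case. For the rough case $v\in L^2(\Omega)$, I would move one power of $\Delta_h$ onto the resolvent via the identity $(z^\alpha-\Delta_h)^{-1}\Delta_h=z^\alpha(z^\alpha-\Delta_h)^{-1}-I$ applied twice, converting the singular factor $|z|^{1-\alpha}$ into $|z|$ at the price of two additional $z^\alpha$ factors that are absorbed by the resolvent; integration then produces $t_n^{-2}\tau^2\|v\|_{L^2}$. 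The $L^2$-stability of $P_h$ handles $v_h=P_h v$.

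The main obstacle, and the technically subtle part, is the kernel estimate (i) together with its sectorial propagation off the unit circle. Lemma \ref{lem.3} gives $\varrho(\xi)\in\Sigma_\vartheta$ only on $\mathbb{D}'$, whereas the contour deformation requires analyticity and a sectorial image of $\varrho(e^{-z\tau})$ in a genuine neighborhood of the ray $\arg z=0$; this forces me to analytically continue $\varrho$ across $|\xi|=1$ near $\xi=1$ and track the singularity $(1-\xi)^\alpha$ there. Both the hypothesis $\theta\in(0,1/2)$ and $\alpha\in(\alpha_0,1)$ enter precisely at this point, the former ensuring $1-\theta+\theta\xi$ is bounded away from zero on the relevant region, the latter preventing the prefactor $|z|^{1-\alpha}$ from producing a non-integrable logarithmic singularity. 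The $\alpha$-robustness then reduces to checking that the bounds derived in Theorem \ref{thm.1} and the constants $\mu_0,\mu_1$ appearing in (\ref{St.9.1}) remain uniformly controlled on $(\alpha_0,1)$, which can be read off directly from their explicit expressions.
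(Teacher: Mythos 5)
Your proposal follows essentially the same route as the paper: the continuous Laplace representation (\ref{Sh.2}), the discrete generating-function representation of Theorem \ref{thm.6} (your $\beta_\tau(\xi)/(1-\theta+\theta\xi)$ is the paper's $\kappa(\xi)$, and your kernel estimates (i)--(ii) are Lemma \ref{lem.7} (iii) and the bound $|\mu(e^{-z\tau})-1|\leq c\tau^2|z|^2$), with the contour deformation justified by extending the sectorial mapping property of $\varrho$ beyond the unit circle exactly as in Lemmas \ref{lem.5}--\ref{lem.6}, and the final integration over $|z|\sim 1/t_n$ carried out as in the cited Crank--Nicolson analysis. The only slip is bookkeeping: the substitution $\xi=e^{-z\tau}$ in Cauchy's formula produces the prefactor $\tau/(2\pi{\rm i})$, not $1/(2\pi{\rm i}\tau)$, so the correction-carrying factor to be expanded is $\tau\kappa(e^{-z\tau})=z^{-1}\bigl(1+O(\tau^2|z|^2)\bigr)$ rather than $\tau^{-1}\kappa(e^{-z\tau})$; with that corrected, your claimed bound $\|K(z)-K_\tau(z)\|\leq c\tau^2|z|^{1-\alpha}$ and the resulting $t_n^{\alpha-2}\tau^2$ and $t_n^{-2}\tau^2$ rates are exactly what the paper obtains.
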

\begin{thm}\label{thm.5}
Let $\alpha \in (\alpha_0,1)$ for some $\alpha_0>0$, and $\theta \in (0,1/2)$.
Suppose $u_h(t_n)$ and $U_h^n$ are solutions of (\ref{Se.3}) and (\ref{Se.14.1}), respectively.
For $v\equiv 0$, $f_h=P_hf$ with $f$ satisfying $f\in W^{1,\infty}(0,T;L^2(\Omega))$ and $\int_{0}^{t}(t-s)^{\alpha-1}\|f''(x)\|_{L^2(\Omega)}\mathrm{d}s\in L^{\infty}(0,T)$, it holds, for sufficiently small $\tau$, $n\geq 1$,
\begin{equation}\label{Sh.0.2}\begin{split}
\|u_h(t_n)-U_h^n\|_{L^2(\Omega)}\leq
c\tau^2\bigg(t_n^{\alpha-2}\|f(0)\|_{L^2(\Omega)}+t_n^{\alpha-1}\|f'(0)\|_{L^2(\Omega)}
\\+\int_{0}^{t_n}(t_n-s)^{\alpha-1}\|f''(s)\|_{L^2(\Omega)}\mathrm{d}s\bigg),
\end{split}\end{equation}
where $c$ is independent of $\alpha,\tau, n, N$ and $f$, but may depend on $\alpha_0$ and $\theta$.
\end{thm}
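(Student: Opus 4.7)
The plan is to use Laplace transform and generating function representations for both $u_h(t_n)$ and $U_h^n$, coupled with a Taylor decomposition of the source term. First, write
$$ f(t) = f(0) + t f'(0) + \int_0^t (t-s)\, f''(s)\,ds, $$
and by linearity split the error into three parts, which will produce respectively the three summands on the right-hand side of (\ref{Sh.0.2}). Applying the Laplace transform to (\ref{Se.5}) with $v\equiv 0$ gives the continuous representation
$$ u_h(t_n) = \frac{1}{2\pi {\rm i}}\int_\Gamma e^{z t_n}(z^\alpha - \Delta_h)^{-1}\hat f_h(z)\,dz $$
on a Hankel contour $\Gamma\subset\Sigma_\sigma$. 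For the discrete side, I would recycle the generating function identity (\ref{St.15}) already established in the proof of Theorem \ref{thm.2} and substitute $\xi = e^{-z\tau}$ to obtain
$$ U_h^n = \frac{\tau}{2\pi {\rm i}}\int_{\Gamma^\tau} e^{z t_n}\bigl(\tau^{-\alpha}\varrho(e^{-z\tau}) - \Delta_h\bigr)^{-1}\tilde F_h(e^{-z\tau})\,dz, $$
after deforming the circle $|\xi|=\rho<1$ to a truncated Hankel contour; this deformation is justified by the sectoriality of $\tau^{-\alpha}\varrho$ established in Lemma \ref{lem.3} and Remark \ref{rem.6}.

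The core technical ingredient is a second-order consistency estimate for the symbol,
$$ \bigl|\tau^{-\alpha}\varrho(e^{-z\tau}) - z^\alpha\bigr| \leq c\,\tau^2 |z|^{\alpha+2}, $$
valid for $|z|\tau$ sufficiently small, proved by Taylor expanding $\omega(e^{-z\tau}) = (z\tau)^\alpha\bigl(1 + O((z\tau)^2)\bigr)$ together with $(1-\theta+\theta e^{-z\tau})^{-1} = 1 + O(z\tau)$ and monitoring all constants so that they stay bounded as $\alpha\to 1^-$; this is precisely where the hypothesis $\alpha>\alpha_0$ enters. Combining this with the resolvent identity $A^{-1} - B^{-1} = A^{-1}(B-A)B^{-1}$ and the sectorial bound $\|(z^\alpha - \Delta_h)^{-1}\|\leq c|z|^{-\alpha}$ then yields an operator-norm kernel error of size $\tau^2|z|^{2-\alpha}$ along the contour.

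For each of the three pieces of the Taylor decomposition I would substitute the appropriate Laplace symbol ($z^{-1}f(0)$, $z^{-2}f'(0)$, and $z^{-2}\widehat{f''}(z)$), verify that the multiplication by $\tau$ in the discrete representation, combined with the correction $(1/2-\theta)f_h^0$ built into (\ref{Se.14}), makes $\tau\tilde F_h(e^{-z\tau})$ agree with $\hat f_h(z)$ up to $O(\tau^2)$. This cancellation stems from $\tau/(1-e^{-z\tau}) = 1/z + \tau/2 + O(\tau^2 z)$ being compensated by the $-\tfrac{1}{2}f_h^0$ contribution of the correction factor $\tfrac{(1/2-\theta)\xi-(1-\theta)}{1-\theta+\theta\xi}$ at $\xi=1$. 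The resulting contour integral is then bounded by $\int_\Gamma |e^{zt_n}|\cdot|z|^{2-\alpha-k}\,d|z| \leq c\, t_n^{k+\alpha-3}$ with $k=1,2$, which produces the $t_n^{\alpha-2}\|f(0)\|$ and $t_n^{\alpha-1}\|f'(0)\|$ terms, while the $f''$ piece collapses by the Laplace convolution theorem to the claimed convolution integral. I expect the main difficulty to lie in verifying that this precise algebraic cancellation at $\xi=1$ happens \emph{simultaneously} for the $f(0)$ and $f'(0)$ contributions and is uniform in $\alpha\in(\alpha_0,1)$; once this is secured, the remaining contour estimates reduce to standard Hankel integrals analogous to those used in the analysis of CQ methods.
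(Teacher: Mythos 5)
Your proposal is correct and follows essentially the same route as the paper, which establishes the discrete contour representation (Theorem \ref{thm.6}) and the kernel consistency estimates of Lemma \ref{lem.7} (your symbol bound $|\tau^{-\alpha}\varrho(e^{-z\tau})-z^\alpha|\leq c\tau^2|z|^{2+\alpha}$ is exactly item (iii) there, and the $f_h^0$-correction cancellation you worry about is packaged into the factor $\mu(\xi)$ via item (i)), and then defers the Taylor splitting of $f$ and the Hankel-contour bounds to the argument of Jin--Li--Zhou. The only quibble is that the hypothesis $\alpha>\alpha_0$ guards against blow-up as $\alpha\to 0^+$ (constants like $\mu_0^{1/\alpha}$ and $1/\alpha$), not as $\alpha\to 1^-$; robustness at $\alpha\to 1^-$ is obtained separately by tracking the constants.
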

\begin{rem}\label{rem.7}
We adopt the idea in \cite{JinLiZh1} to demonstrate Theorems \ref{thm.4} and \ref{thm.5} in the rest of this section by first proving several key lemmas, which are related closely to our scheme.
The constants $c$ in these lemmas are treated carefully such that they are independent of $\alpha$.
Hence, the estimates in Theorems \ref{thm.4} and \ref{thm.5} are applicable for $\alpha \to 1^-$ which means our estimates are $\alpha$-robust \cite{ChenStynes}.
However, constants $c$ generally depend on the lower bound of $\alpha$, i.e., $\alpha_0$, and may blow up for $\alpha_0\to 0$.
See Lemma \ref{lem.6.6}, Lemma \ref{lem.7} and \textit{Example 3} in Sect.\ref{sec.num}.
We remark that the initial corrections are essential to restore the optimal convergence rate, as for example, only first-order convergence rate can be derived at a fixed time without adding corrections if $v\in D(\Delta)$,  i.e.,
\begin{equation*}\label{Sh.0.3}\begin{split}
\|u_h(t_n)-U_h^n\|_{L^2(\Omega)}\leq
c t_n^{\alpha-1} \tau\|\Delta v\|_{L^2(\Omega)},\quad n\geq 1.
\end{split}\end{equation*}
\end{rem}
\subsection{Laplace transform and solution representations}
Denote by $\widehat{\psi}$ the Laplace transform of $\psi(t)$.
Recall that $\widehat{\partial_t^\alpha \psi}=z^\alpha \widehat{\psi}-z^{\alpha-1}\psi(0)$ and $D_t^\alpha w_h(t)=\partial_t^\alpha w_h(t)$, since $w_h(0)=0$, we have $\widehat{D_t^\alpha w_h}=z^\alpha \widehat{w_h}$.
By taking the Laplace transform of the spacial semidiscrete scheme (\ref{Se.5}), we can get
\begin{equation}\label{Sh.1}\begin{split}
(z^\alpha -\Delta_h) \widehat{w_h}=\widehat{f_h}+z^{-1}\Delta_h v_h.
\end{split}\end{equation}
Then, using inverse Laplace transform, $w_h(t)$ can be represented by
\begin{equation}\label{Sh.2}\begin{split}
w_h(t)=-\frac{1}{2\pi{\rm i}}\int_{\Gamma_{\sigma+\pi/2,\delta}}e^{zt}\big(K(z)\Delta_h v_h+zK(z)\widehat{f_h}(z)\big)\mathrm{d}z,
~
K(z)=-z^{-1}(z^\alpha-\Delta_h)^{-1},
\end{split}\end{equation}
where $K(z)$ is the kernel function.
The contour $\Gamma_{\sigma+\pi/2,\delta}$ (oriented with an increasing imaginary part) is defined by
\begin{equation}\label{Sh.3}\begin{split}
\Gamma_{\sigma+\pi/2,\delta}:=\{z\in\mathbb{C}:|z|=\delta, |\arg z|\leq \sigma+\pi/2, 0<\sigma<\pi/2\}
\\\cup\{z\in\mathbb{C}: z=re^{\pm{\rm i}(\sigma+\frac{\pi}{2})}, r\geq \delta\}.
\end{split}\end{equation}
Since the discrete Laplacian $\Delta_h$ is sectorial of angle $\sigma +\pi/2$, then, $\|(z-\Delta_h)^{-1}\| \leq c|z|^{-1}$, $\forall z \in \Sigma_{\sigma+\pi/2}$.
For $\alpha \in (0,1)$, $z^\alpha=|z|^\alpha e^{{\rm i}\alpha\arg z} \in \Sigma_{\sigma+\pi/2}$, we thus have the resolvent estimate
\begin{equation}\label{Sh.4}\begin{split}
\|(z^\alpha-\Delta_h)^{-1}\| \leq c|z|^{-\alpha},\quad \forall z\in \Sigma_{\sigma+\pi/2}.
\end{split}\end{equation}
\par
Next lemma gives a discrete analogy of the continuous solution representation (\ref{Sh.2}).
\begin{thm}\label{thm.6}
Let $g_h^n:=f_h^n-f_h^0$.
For $\alpha \in (0,1)$ and $\theta \in (0,1/2)$, there exists a $\sigma_0 \in (0,\pi/2)$  which is independent of $\alpha$ and $\tau$, so that for any $\sigma, \delta\in (0,\sigma_0)$, the solution $W_h^n$ of the scheme (\ref{Se.14}) can be expressed by
\end{thm}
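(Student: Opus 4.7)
The plan is to derive a generating-function identity for $W_h(\xi):=\sum_{n=0}^{\infty}W_h^n\xi^n$, invert via Cauchy's formula, and then deform to the Hankel-type contour $\Gamma_{\sigma+\pi/2,\delta}^{\tau}$ by the same change of variable $\xi=e^{-z\tau}$ used in the continuous representation \eqref{Sh.2}. First, I would mimic the calculation in the proof of Theorem \ref{thm.2} but applied to \eqref{Se.14}: multiply the $n=1$ and $n\ge 2$ equations by $\xi^n$, sum from $n=1$ to $\infty$, and use $W_h^0=0$. The left-hand side collapses, just as in \eqref{St.13}--\eqref{St.15}, to $(\tau^{-\alpha}\omega(\xi)-(1-\theta+\theta\xi)\Delta_h)W_h(\xi)$. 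On the right, the source $\Delta_h v_h$ (present for all $n\ge 1$) contributes $\Delta_h v_h \sum_{n\ge 1}\xi^n$ plus the extra $(1/2-\theta)\xi\Delta_h v_h$ coming from the correction, while the $f_h$-part, after separating $f_h^n=g_h^n+f_h^0$, produces $(1-\theta+\theta\xi)g_h(\xi)$ plus an analogous rational multiple of $f_h^0$. A short algebraic simplification shows these two residual coefficients coincide, so one obtains
\begin{equation*}
\bigl(\tau^{-\alpha}\varrho(\xi)-\Delta_h\bigr)W_h(\xi)=g_h(\xi)+\beta_\tau(\xi)\bigl(f_h^0+\Delta_h v_h\bigr),
\end{equation*}
with $\varrho(\xi)=\omega(\xi)/(1-\theta+\theta\xi)$ and an explicit rational function $\beta_\tau(\xi)$ that is regular at $\xi=\pm 1$ (this is the function referenced in Theorem \ref{thm.6}).

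Next, since Theorem \ref{thm.2} guarantees $W_h(\xi)$ is analytic in $|\xi|<1$, Cauchy's formula gives $W_h^n=\tfrac{1}{2\pi {\rm i}}\oint_{|\xi|=\rho}\xi^{-n-1}W_h(\xi)\,{\rm d}\xi$ for any $\rho\in(0,1)$. I would then substitute $\xi=e^{-z\tau}$, which maps $|\xi|=\rho$ onto the vertical segment $\Re z=-\tau^{-1}\log\rho$ in the right half-plane, so that
\begin{equation*}
W_h^n=\frac{1}{2\pi {\rm i}}\int e^{zt_n}\bigl(\tau^{-\alpha}\varrho(e^{-z\tau})-\Delta_h\bigr)^{-1}\bigl[\,\widetilde{g_h}(z)+\beta_\tau(e^{-z\tau})(f_h^0+\Delta_h v_h)\bigr]\tau\,{\rm d}z,
\end{equation*}
where $\widetilde{g_h}$ is the discrete Laplace transform of $(g_h^n)$. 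To recast this in the form of \eqref{Sh.2}, I would choose $\sigma_0\in(0,\pi/2)$ small enough and deform the vertical line to the contour $\Gamma_{\sigma+\pi/2,\delta}^{\tau}$ (the portion of $\Gamma_{\sigma+\pi/2,\delta}$ inside $|\Im z|\le \pi/\tau$ — the discrete analogue needed because $\xi=e^{-z\tau}$ is $2\pi{\rm i}/\tau$-periodic).

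The core analytic task, and the main obstacle, is justifying this deformation uniformly in $\tau$ and in $\alpha\in(\alpha_0,1)$. Three ingredients are needed. (i) \emph{Analyticity of the integrand on and to the right of $\Gamma_{\sigma+\pi/2,\delta}^{\tau}$.} The map $z\mapsto e^{-z\tau}$ sends this region into $\{|\xi|\le 1\}\setminus\{1\}$, and Remark \ref{rem.6} precisely guarantees $\varrho(e^{-z\tau})\in\Sigma_{\vartheta}$ there; the factor $\beta_\tau$ is a rational function whose only singularities are at $\xi=1,-1,-(1-\theta)/\theta$, all lying outside the deformation region once $\sigma_0$ is small enough (this is exactly why the proof excludes $\theta=1/2$, matching Remark \ref{rem.8}). (ii) \emph{Resolvent estimate.} Since $\Delta_h$ is R-sectorial of angle $\vartheta$ and $\tau^{-\alpha}\varrho(e^{-z\tau})\in\Sigma_{\vartheta}$, one obtains $\|(\tau^{-\alpha}\varrho(e^{-z\tau})-\Delta_h)^{-1}\|\le c|\tau^{-\alpha}\varrho(e^{-z\tau})|^{-1}$, and then the short-time asymptotics $\varrho(e^{-z\tau})=(z\tau)^\alpha(1+O(z\tau))$ near $z=0$ translate this into the continuous resolvent estimate \eqref{Sh.4} on $\Gamma_{\sigma+\pi/2,\delta}^{\tau}$, uniformly in $\alpha$ bounded away from $0$. (iii) \emph{Decay at the endpoints $\Im z=\pm\pi/\tau$ of $\Gamma_{\sigma+\pi/2,\delta}^{\tau}$.} Cancellation of the two vertical tails of adjacent periods uses the $2\pi{\rm i}/\tau$-periodicity of $e^{-z\tau}$, so the deformation closes up into the desired discrete contour. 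Once these three points are in place, the theorem follows by writing the result in the announced form with $K_\tau(z):=-z^{-1}(\tau^{-\alpha}\varrho(e^{-z\tau})-\Delta_h)^{-1}$ (or the equivalent formulation used in the statement).
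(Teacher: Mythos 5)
Your overall strategy is exactly the paper's: derive the generating-function identity from (\ref{Se.14}), recover $W_h^n$ by Cauchy's integral formula on a small circle, substitute $\xi=e^{-z\tau}$, and deform to $\Gamma_{\sigma+\pi/2,\delta}^{\tau}$ using analyticity of the resolvent together with the $2\pi{\rm i}/\tau$-periodicity to cancel the horizontal segments $\Im z=\pm\pi/\tau$. However, your justification of the deformation in step (i) has a genuine gap. You assert that $z\mapsto e^{-z\tau}$ sends the region of deformation into $\{|\xi|\le 1\}\setminus\{1\}$, so that Remark \ref{rem.6} suffices. This is false: the contour $\Gamma_{\sigma+\pi/2,\delta}^{\tau}$ and the region $\mathbb{S}$ between it and the vertical line contain points with $\Re z<0$ (the rays have argument $\pm(\sigma+\pi/2)$, which exceeds $\pi/2$), and there $|e^{-z\tau}|=e^{-\tau\Re z}>1$. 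Remark \ref{rem.6} only covers $|\xi|\le 1$. The nontrivial content of the paper's proof is precisely the extension of the sector-mapping property of $\varrho$ to $\{\xi:|\xi|\le 1+\sigma_0,\ |\arg\xi|>\pi-\sigma_0\}$ (Lemma \ref{lem.5}) and its transfer to $z\in\mathbb{S}_{\sigma_0}^{\tau}$ (Lemma \ref{lem.6}); this is where the restriction $\theta<1/2$ and the $\alpha$-independent choice of $\sigma_0$ actually do their work, and your argument as written does not supply it.

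A secondary error: you label the rational coefficient of $f_h^0+\Delta_h v_h$ in the generating-function identity as ``$\beta_\tau(\xi)$, the function referenced in Theorem \ref{thm.6}'' and claim it is regular at $\xi=\pm 1$. In the paper that coefficient is $\kappa(\xi)=\xi\bigl(\frac{1}{1-\xi}+\frac{1}{2}-\theta\bigr)/(1-\theta+\theta\xi)$, which necessarily has a simple pole at $\xi=1$ (it encodes $\sum_{n\ge 1}\xi^{n}=\xi/(1-\xi)$, the discrete analogue of the factor $z^{-1}$ multiplying $\Delta_h v_h$ in (\ref{Sh.2})); the theorem's $\beta_\tau(\xi)=\tau^{-1}\varrho(\xi)^{1/\alpha}$ is the discrete symbol replacing $z$, and $\mu(\xi)=\tau\beta_\tau(\xi)\kappa(\xi)$. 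The pole at $\xi=1$ is harmless only because $z=0$ is excluded from the deformation region by the arc $|z|=\delta$, a point worth making explicit rather than claiming regularity that does not hold.
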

\begin{equation}\label{Sh.5}\begin{split}
W_h^n=\frac{1}{2\pi{\rm i}}\int_{\Gamma_{\sigma+\pi/2,\delta}^\tau}e^{zt_n}\big[
\mu(e^{-z\tau})K(\beta_\tau(e^{-z\tau}))(-\Delta_hv_h-f_h^0)
\\-\beta_\tau(e^{-z\tau})K(\beta_\tau(e^{-z\tau}))g_h(e^{-z\tau})\tau
\big]\mathrm{d}z,
\end{split}\end{equation}
where $K(z)$ denotes the kernel function, and $\Gamma_{\sigma+\pi/2,\delta}^\tau$ is part of the contour $\Gamma_{\sigma+\pi/2,\delta}$ defined by $\Gamma_{\sigma+\pi/2,\delta}^\tau:=\{z\in \Gamma_{\sigma+\pi/2,\delta}:|\Im(z)|\leq \pi/\tau\}$.
$\beta_\tau(\xi)$ and $\mu(\xi)$ are given as follows
\begin{equation}\label{Sh.6}\begin{split}
\beta_\tau(\xi)&:=\frac{1}{\tau}\bigg[\frac{\omega(\xi)}{1-\theta+\theta\xi}\bigg]^\frac{1}{\alpha}
=\mu_0^{\frac{1}{\alpha}}\frac{1-\xi}{\tau(1+\mu_1\xi)}(1-\theta+\theta\xi)^{-\frac{1}{\alpha}},
\\
\mu(\xi)&:=\frac{\mu_0^{\frac{1}{\alpha}}(3/2-\theta)\xi(1-\mu_2\xi)}{(1+\mu_1\xi)(1-\theta+\theta\xi)^{1+\frac{1}{\alpha}}},
\end{split}\end{equation}
where $\mu_0=\big(\frac{2\alpha}{\alpha+2\theta}\big)^\alpha, \mu_1=\frac{\alpha-2\theta}{\alpha+2\theta}, \mu_2=\frac{1-2\theta}{3-2\theta}$.
\par
Before proving Theorem \ref{thm.6}, we need to demonstrate some mapping properties of $\varrho(\xi)$ and $\beta_\tau^\alpha(e^{-z\tau})$ first.
\begin{lemma}\label{lem.5}
Given $\alpha \in (0,1)$, $\vartheta \in (\pi/2,\pi)$ and $\theta \in (0,1/2)$.
Let $\varrho(\xi)=\frac{\omega(\xi)}{1-\theta+\theta\xi}$.
There exists a $\sigma_0 \in (0,\pi/2)$, which is independent of $\alpha$ (but may depend on $\theta$ and $\vartheta$), such that for any $\xi \in \{\xi \in \mathbb{C}\setminus \overline{\Sigma}_{\pi-\sigma_0}: |\xi|\leq 1+\sigma_0\}$, it holds $\varrho(\xi) \in \Sigma_{\vartheta}$.
\end{lemma}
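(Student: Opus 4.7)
The plan is to show that $\arg\varrho(\xi)$ vanishes at $\xi=-1$ uniformly in $\alpha\in(0,1)$, and grows at most linearly in $|\xi+1|$ with constants depending only on $\theta$. Since the region $\{|\xi|\leq 1+\sigma_0\}\cap(\mathbb{C}\setminus\overline{\Sigma}_{\pi-\sigma_0})$ is a thin wedge around $-1$ of diameter $O(\sigma_0)$, the inclusion $\varrho(\xi)\in\Sigma_\vartheta$ will follow by shrinking $\sigma_0$ depending only on $\theta$ and $\vartheta$.

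First I would locate the singularities of $\varrho(\xi)=\mu_0\bigl(\tfrac{1-\xi}{1+\mu_1\xi}\bigr)^\alpha/(1-\theta+\theta\xi)$, namely $\xi=1$, $\xi=-1/\mu_1$, and $\xi=1-1/\theta$. The first is excluded from $\mathbb{C}\setminus\overline{\Sigma}_{\pi-\sigma_0}$; the third satisfies $|1-1/\theta|=1/\theta-1>1$ uniformly in $\alpha$; and a short case analysis on the sign of $\mu_1=(\alpha-2\theta)/(\alpha+2\theta)$ shows $-1/\mu_1$ either lies on the positive real axis (hence inside $\overline{\Sigma}_{\pi-\sigma_0}$ for small $\sigma_0$) or on $(-\infty,-(1+2\theta)/(1-2\theta)]$, staying in both cases at a positive distance from the wedge near $-1$ that depends only on $\theta$. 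An initial choice of $\sigma_0$ thus separates all three singularities from a fixed neighborhood of the region, and makes $(1-\xi)/(1+\mu_1\xi)$ stay in a sector around the positive real axis so that the principal branch of the $\alpha$-th power is unambiguous.

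Next, at $\xi=-1$ every factor is positive: $1-\xi=2$, $1+\mu_1\xi=1-\mu_1>0$, $1-\theta+\theta\xi=1-2\theta>0$. Hence $\varrho(-1)>0$ and $\arg\varrho(-1)=0$ for every $\alpha$. Writing $\xi=-1+\epsilon$, Taylor expanding the decomposition
\begin{equation*}
\arg\varrho(\xi)=\alpha\bigl[\arg(1-\xi)-\arg(1+\mu_1\xi)\bigr]-\arg(1-\theta+\theta\xi),
\end{equation*}
and using the identity $(1+\mu_1)/(1-\mu_1)=\alpha/(2\theta)$ gives
\begin{equation*}
\arg\varrho(-1+\epsilon)=-\Im(\epsilon)\left(\frac{\alpha^{2}}{4\theta}+\frac{\theta}{1-2\theta}\right)+O(|\epsilon|^{2}),
\end{equation*}
whose linear coefficient is bounded by $\tfrac{1}{4\theta}+\tfrac{\theta}{1-2\theta}$ uniformly in $\alpha$. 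Parameterizing $\xi=\rho e^{{\rm i}\phi}$ with $\rho\leq 1+\sigma_0$ and $|\phi|\geq\pi-\sigma_0$, the bound $1+\cos\phi\leq\sigma_0^{2}/2$ gives $|\xi+1|^{2}=(\rho-1)^{2}+2\rho(1+\cos\phi)\leq(2+\sigma_0)\sigma_0^{2}$. Combining, $|\arg\varrho(\xi)|\leq C(\theta)\sigma_0+O(\sigma_0^{2})$, which is $<\vartheta$ for $\sigma_0$ small enough.

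The main obstacle is the uniformity in $\alpha$ of the $O(|\epsilon|^{2})$ remainder, whose coefficients involve $(1+\mu_1\xi)^{-2}$ with $\mu_1\to -1$ as $\alpha\to 0$. In this limit the singularity $-1/\mu_1$ drifts toward $+1$ but stays at distance $\geq 2-O(\sigma_0)$ from the wedge near $-1$, so bounded second derivatives still hold uniformly in $\alpha$. If explicit bookkeeping proves cumbersome, one may instead extend $\varrho(\cdot,\alpha)$ continuously to $\alpha=0$ (where $\varrho(\xi)=1/(1-\theta+\theta\xi)$) and invoke uniform continuity of $\arg\varrho$ on the compact parameter set, yielding the $\alpha$-independent $\sigma_0$ claimed by the lemma.
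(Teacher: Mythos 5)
There is a genuine gap in the central quantitative step. The set $\{\xi \in \mathbb{C}\setminus \overline{\Sigma}_{\pi-\sigma_0}: |\xi|\leq 1+\sigma_0\}$ is \emph{not} ``a thin wedge around $-1$ of diameter $O(\sigma_0)$'': it is a sector with vertex at the origin, of angular half-width $\sigma_0$ about the negative real axis and radius $1+\sigma_0$, so it contains the entire segment $(-1,0)$. Consequently your bound $|\xi+1|^2=(\rho-1)^2+2\rho(1+\cos\phi)\leq(2+\sigma_0)\sigma_0^2$ is false — it tacitly assumes $\rho\geq 1-\sigma_0$, whereas $\rho$ ranges over all of $(0,1+\sigma_0]$ (take $\xi=-1/2$: then $|\xi+1|=1/2$ no matter how small $\sigma_0$ is). So the Taylor expansion of $\arg\varrho$ at $\xi=-1$, whose linear coefficient $\frac{\alpha^2}{4\theta}+\frac{\theta}{1-2\theta}$ you compute correctly, controls only a genuine neighborhood of $-1$ and not the region in the lemma. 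Your fallback (uniform continuity in $\alpha$ on a compact set) is aimed at the $\alpha$-uniformity of the second-order remainder and does not address this geometric issue either.

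The idea is salvageable, but the anchor must be the whole negative real segment rather than the single point $-1$: for every $\xi=-t$ with $0<t\leq 1+\sigma_0$ one has $1-\xi>0$, $1+\mu_1\xi>0$ (provided $\sigma_0<\frac{4\theta}{1-2\theta}$ so that $-1/\mu_1$ stays outside) and $1-\theta+\theta\xi>0$ (provided $\sigma_0<\frac{1-2\theta}{\theta}$), hence $\arg\varrho(-t)=0$; each point $\rho e^{{\rm i}\phi}$ of the region is then reached from $-\rho$ by a circular arc of length at most $(1+\sigma_0)\sigma_0$, and a uniform-in-$\alpha$ bound on $\partial_\xi\log\varrho=-\frac{\alpha}{1-\xi}-\frac{\alpha\mu_1}{1+\mu_1\xi}-\frac{\theta}{1-\theta+\theta\xi}$ over the closed sector (the denominators are bounded below uniformly once $\sigma_0$ is chosen as above) gives $|\arg\varrho(\xi)|\leq C(\theta)\sigma_0<\vartheta$. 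Note that the paper takes a different route altogether: it writes $\arg\varrho=\Theta(\delta,\gamma,\theta,\alpha)$, shows $\Theta<0$, that $\Theta$ is decreasing in $\alpha$ (so the worst case is $\alpha=1$, which removes the $\alpha$-dependence), and that the resulting $\Upsilon=\Theta|_{\alpha=1}$ is decreasing in $\delta=|\xi|$, reducing everything to the unit-circle bound $|\Theta|<\pi/2$ of Lemma \ref{lem.3} plus continuity in $\delta$; this monotonicity structure is what handles the full range $\delta\in(0,1+\sigma_0]$ that your argument misses.
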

\begin{proof}
Give $\theta \in (0,1/2)$, we take $\sigma_1:=\min\{\frac{1-2\theta}{\theta},\frac{4\theta}{1-2\theta},\frac{1}{2}\}$.
Let $\xi=\delta e^{{\rm i}\gamma}$, $\delta\leq 1+\sigma_1$ and $\gamma\in (\pi-\sigma_1,\pi)\cup[-\pi,-\pi+\sigma_1)$.
By (\ref{St.10.1}), we can obtain
\begin{equation}\label{Sh.6.1}\begin{split}
\Theta=\Theta(\delta,\gamma,\theta,\alpha)=\alpha\beta_0-\alpha\beta_1-\beta_2,
\end{split}\end{equation}
where
\begin{equation}\label{Sh.6.2}\begin{split}
\beta_0=-\arctan\frac{\delta\sin\gamma}{1-\delta\cos\gamma},~
\beta_1=\arctan\frac{\mu_1\delta\sin\gamma}{1+\mu_1\delta\cos\gamma},~
\beta_2=\arctan\frac{\theta\delta\sin\gamma}{1-\theta+\theta\delta\cos\gamma}.
\end{split}\end{equation}
For simplicity, define the function $S(\lambda,\delta):=\arctan\frac{\delta\sin\gamma}{\lambda+\delta\cos\gamma}$.
Then, $\beta_0=S(-1,\delta)$, $\beta_1=S(\frac{1}{\mu_1},\delta)$ and $\beta_2=S(\frac{1-\theta}{\theta},\delta)$.
Since $\Theta(\delta,-\gamma,\theta,\alpha)=-\Theta(\delta,\gamma,\theta,\alpha)$, it suffices to analyze the case $\gamma \in (\pi-\sigma_1,\pi)$.
\par
\textit{Step 1. $\Theta<0$.}
\par
Obviously, $\beta_0<0$.
For $\beta_2$, we have
\begin{equation}\label{Sh.6.2.1}\begin{split}
\frac{1-\theta}{\theta}+\delta\cos\gamma>\frac{1-\theta}{\theta}-\delta\geq \frac{1-2\theta}{\theta}-\sigma_1\geq 0,\quad
\beta_2=\arctan\frac{\delta\sin\gamma}{\frac{1-\theta}{\theta}+\delta\cos\gamma}>0.
\end{split}\end{equation}

For $\beta_1$, if $\alpha=2\theta$, then $\beta_1=0$, $\Theta=\alpha \beta_0-\beta_2<0$.
If $\alpha>2\theta$, we have
\begin{equation}\label{Sh.6.3}\begin{split}
&\frac{1}{\mu_1}+\delta\cos\gamma > \frac{\alpha+2\theta}{\alpha-2\theta}-\delta > \frac{1+2\theta}{1-2\theta}-\delta
\geq \frac{4\theta}{1-2\theta}-\sigma_1\geq 0,
\\
&\beta_1=\arctan\frac{\delta\sin\gamma}{\frac{1}{\mu_1}+\delta\cos\gamma}>0,
\end{split}\end{equation}
with which there holds $\Theta=\alpha\beta_0-\alpha\beta_1-\beta_2<0$.
If $\alpha<2\theta$, one can check $S_\lambda(\lambda,\delta)<0$ and $\frac{1}{\mu_1}<-1$, then $\Theta=\alpha\big(S(-1,\delta)-S(\frac{1}{\mu_1},\delta)\big)-\beta_2<0$.
\par
\textit{Step 2. $\Theta$ is a decreasing function with respect to $\alpha$}.
\par
By a direct calculation, we have
\begin{equation}\label{Sh.6.4}\begin{split}
\frac{\partial \Theta}{\partial \alpha}=S(-1,\delta)-S(1/\mu_1,\delta)-\frac{4\alpha\delta\theta\sin\gamma}{\big[(\alpha-2\theta)\delta+(\alpha+2\theta)\cos\gamma\big]^2+(\alpha+2\theta)^2\sin^2\gamma}
\end{split}\end{equation}
Appealing to a similar analysis in \textit{Step 2.}, we can get $\frac{\partial \Theta}{\partial \alpha}<0$.
Combining with $\Theta<0$, we conclude $\Upsilon(\delta,\gamma,\theta):=\Theta(\delta,\gamma,\theta,1)<\Theta(\delta,\gamma,\theta,\alpha)<0$.
\par
\textit{Step 3. $\Upsilon$ is a decreasing function with respect to $\delta$}.
\par
One can get
\begin{equation}\label{Sh.6.5}\begin{split}
\frac{\partial \Upsilon}{\partial \delta}
=S_\delta(-1,\delta)-S_\delta(\frac{1+2\theta}{1-2\theta},\delta)-S_\delta(\frac{1-\theta}{\theta},\delta).
\end{split}\end{equation}
Note that $S_\delta(\lambda,\delta)=\lambda\sin\gamma/\big[(\delta+\lambda\cos\gamma)^2+(\lambda\sin\gamma)^2\big]$, we thus have $\frac{\partial \Upsilon}{\partial \delta}<0$.
\par
Now, we note for arbitrary given $\theta \in (0,1/2)$, $\gamma \in (\pi-\sigma_1,\pi)\cup (-\pi,-\pi+\sigma_1)$,  it holds $\Upsilon(1,\gamma,\theta) \in (-\pi/2,\pi/2)$ by Lemma \ref{lem.3}.
Then, for a given $\vartheta \in (\pi/2,\pi)$ which is close to $\pi/2$, by the continuity of $\Upsilon$ with respect to $\delta$, we can take a small $\sigma_2$ (may depend on $\vartheta$) and let $\sigma_0=\min\{\sigma_1,\sigma_2\}$,
such that $\Upsilon \in (-\vartheta,\vartheta)$  for $\delta \leq 1+\sigma_0$, $\gamma \in (\pi-\sigma_0,\pi)\cup (-\pi,-\pi+\sigma_0)$.
The proof of the lemma is completed.
\end{proof}
\begin{lemma}\label{lem.6}
Given $\alpha \in (0,1)$, $\theta \in (0,1/2)$, $\vartheta \in (\pi/2,\pi)$ and $\tau>0$.
Then, there exists a $\sigma_0 \in (0,\pi/2)$, which is independent of $\alpha$ and $\tau$ (but may depend on $\theta$ and $\vartheta$), such that for any $z \in \mathbb{S}^\tau_{\sigma_0}:=\{z=x+{\rm i}y\in\Sigma_{\sigma_0+\pi/2}:|y|<(\pi+\sigma_0)/\tau\}$, it holds $\beta^\alpha_\tau(e^{-z\tau}) \in \Sigma_\vartheta$.
\end{lemma}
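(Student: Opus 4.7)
The plan is to set $\xi = e^{-z\tau}$ and observe that $\beta_\tau^\alpha(\xi) = \tau^{-\alpha}\varrho(\xi)$; since $\tau^{-\alpha}>0$ is a positive scalar, $\arg \beta_\tau^\alpha(\xi) = \arg \varrho(\xi)$, so the claim reduces to showing $\varrho(e^{-z\tau}) \in \Sigma_\vartheta$ for every $z \in \mathbb{S}^\tau_{\sigma_0}$. The strategy is therefore to analyse the image of $\mathbb{S}^\tau_{\sigma_0}$ under the exponential map $z \mapsto e^{-z\tau}$ and reduce the claim to Remark \ref{rem.6}, Lemma \ref{lem.5}, and the local asymptotics of $\varrho$ near $\xi = 1$.

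Writing $z=x+iy$, the condition $|\arg z| < \pi/2+\sigma_0$ is equivalent to $x > -|y|\tan\sigma_0$, hence
\[
|e^{-z\tau}| = e^{-x\tau} \leq e^{|y|\tau\tan\sigma_0} \leq e^{(\pi+\sigma_0)\tan\sigma_0} \leq 1+\sigma_0
\]
once $\sigma_0$ is taken small enough, while $\arg(e^{-z\tau}) = -y\tau \pmod{2\pi}$ lies in $(-\pi-\sigma_0,\pi+\sigma_0)$. I would then split into two cases. For $\Re(z) \geq 0$ we have $|\xi| \leq 1$ and Lemma \ref{lem.3} together with Remark \ref{rem.6} yields $\varrho(\xi) \in \Sigma_{\vartheta_0}$ for some $\vartheta_0 < \pi/2 \leq \vartheta$ (with $\xi$ near $1$ handled separately, see below). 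For $\Re(z)<0$ the sector condition forces $|x| < |y|\tan\sigma_0$, so $|\xi|$ lies in the thin annulus $1\leq|\xi|\leq 1+\sigma_0$; subdividing by $\arg\xi$, the part near the negative real axis is covered directly by Lemma \ref{lem.5}, whereas the complementary part of the annulus is handled by a continuity and compactness argument: $\varrho$ is holomorphic on a neighbourhood of the closed unit disk minus $\xi=1$, and the boundary-value bounds on $|\xi|=1$ (Lemma \ref{lem.3}) together with those on the negative-axis sliver (Lemma \ref{lem.5}) extend into the thin annulus by uniform continuity, provided $\sigma_0$ is small enough relative to $\vartheta$.

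The delicate piece is the neighbourhood of $\xi = 1$, which corresponds to $z$ near $0$ in $\mathbb{S}^\tau_{\sigma_0}$. There $\varrho(\xi) = \mu_0(1-\xi)^\alpha(1+\mu_1\xi)^{-\alpha}(1-\theta+\theta\xi)^{-1}$ has a branch point, and $1-e^{-z\tau} = z\tau + O((z\tau)^2)$ gives $\arg(1-\xi) = \arg z + O(|z\tau|)$. Consequently
\[
\arg \varrho(\xi) = \alpha \arg z + O(|z\tau|),
\]
and $\alpha \arg z$ lies in $(-\alpha(\pi/2+\sigma_0), \alpha(\pi/2+\sigma_0)) \subset (-\vartheta,\vartheta)$ for every $\alpha \in (0,1)$ as soon as $\vartheta > \pi/2+\sigma_0$; shrinking $\sigma_0$ absorbs the remainder uniformly in $\alpha$ and $\tau$.

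The main obstacle I anticipate is keeping $\sigma_0$ independent of both $\alpha$ and $\tau$. The $\tau$-independence comes for free, because the image $\{e^{-z\tau}:z\in\mathbb{S}^\tau_{\sigma_0}\}$ depends only on the scaled quantities $x\tau$ and $y\tau$, which already range over a $\tau$-independent region; the $\alpha$-independence is inherited from Lemma \ref{lem.5} and from the uniform inequality $\alpha(\pi/2+\sigma_0) < \pi/2+\sigma_0 < \vartheta$ valid for all $\alpha \in (0,1)$. The remaining technicality is a careful book-keeping of the continuity modulus of $\varrho$ across the unit circle, which should go through exactly as in the proof of Lemma \ref{lem.5}.
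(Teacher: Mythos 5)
Your overall architecture coincides with the paper's: reduce the claim to $\varrho(e^{-z\tau})\in\Sigma_\vartheta$, dispose of $\Re z\ge 0$ (where $|e^{-z\tau}|\le 1$) via Lemma \ref{lem.3} and Remark \ref{rem.6}, and treat the points with $\Re z<0$, whose images lie just outside the unit circle, via Lemma \ref{lem.5}. The paper sends every such image point to the sliver $\{|\arg\xi|>\pi-\sigma_0\}$ covered by Lemma \ref{lem.5}; you correctly observe that $\arg(e^{-z\tau})=-\tau\,\Im z$ in fact sweeps all of $(-(\pi+\sigma_0),\pi+\sigma_0)$, so the image fills a thin region hugging the whole circle, and you add two ingredients: a local expansion near $\xi=1$ and a continuity argument for the intermediate angles. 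The near-$\xi=1$ piece, $\arg\varrho=\alpha\arg z+O(|z\tau|)$ with $\alpha|\arg z|<\pi/2+\sigma_0<\vartheta$, is sound and uniform in $\alpha$ (once one also checks that the contribution $\alpha\arg(1+\mu_1\xi)$ stays small even though $1+\mu_1=\frac{2\alpha}{\alpha+2\theta}\to0$ as $\alpha\to0$; the prefactor $\alpha$ saves it).

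The gap is in the ``continuity and compactness'' step. Its premise --- that $\varrho$ is holomorphic on a fixed neighbourhood of the closed unit disk minus $\xi=1$ --- is not true uniformly in $\alpha$: for $\alpha<2\theta$ the branch point $\xi=-1/\mu_1=1+\frac{2\alpha}{2\theta-\alpha}$ sits on the positive real axis just outside the circle, tends to $1^{+}$ as $\alpha\to0^{+}$, and so enters any fixed annulus $1\le|\xi|\le1+\sigma_0$, with $|\varrho|\to\infty$ there. Hence the family $\{\varrho\}_{\alpha\in(0,1)}$ is not equicontinuous on the annulus, and a soft compactness/uniform-continuity argument can only deliver a $\sigma_0$ depending on $\alpha$ --- exactly what the lemma forbids. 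The step can be repaired, but it requires a quantitative estimate in place of the appeal to continuity: on the intermediate set (image points with $|1-\xi|\ge\epsilon$ and $|\arg\xi|\le\pi-\sigma_0$, which forces $|\arg\xi|\gtrsim\epsilon$ and keeps $\xi$ away from $-1/\mu_1$) one shows
$\bigl|\varrho'(\xi)/\varrho(\xi)\bigr|=\bigl|\tfrac{\alpha}{1-\xi}+\tfrac{\alpha\mu_1}{1+\mu_1\xi}+\tfrac{\theta}{1-\theta+\theta\xi}\bigr|\le C(\theta,\epsilon)$ uniformly in $\alpha$, whence $|\arg\varrho(\delta e^{{\rm i}\gamma})-\arg\varrho(e^{{\rm i}\gamma})|\le C\sigma_0$ and Lemma \ref{lem.3} finishes the job. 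Without such a bound, ``extend by uniform continuity, provided $\sigma_0$ is small enough'' is the assertion to be proved rather than an argument. (A minor slip besides: $e^{(\pi+\sigma_0)\tan\sigma_0}\le1+\sigma_0$ fails for small $\sigma_0$, since the left side is $1+\pi\sigma_0+O(\sigma_0^2)$; you need to decouple the sector half-angle from the annulus width, as the paper implicitly does.)
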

\begin{proof}
We choose $\sigma_0$ as stated in Lemma \ref{lem.5} and take two steps in the following arguments: (i) $z\in \overline{\Sigma}_{\pi/2}\setminus \{0\}$, or (ii) $z\in \mathbb{S}^\tau_{\sigma_0}\setminus \overline{\Sigma}_{\pi/2}$.
\par
For $z\in \overline{\Sigma}_{\pi/2}\setminus \{0\}$, let $z=x+{\rm i}y$ where $x\geq 0$ and $x^2+y^2\neq 0$.
Then, $|e^{-\tau z}|=|e^{-\tau x}e^{-{\rm i}\tau y}|=e^{-\tau x}\leq 1$ and $e^{-z\tau}\neq 1$.
Using Lemma \ref{lem.3} and Remark \ref{rem.6}, we have $\varrho(e^{-z\tau})\in \Sigma_{\pi/2}$ and $\beta^\alpha_\tau(e^{-z\tau})=\tau^{-\alpha}\varrho(e^{z\tau})\in \Sigma_{\pi/2}$.
\par
For $z\in \mathbb{S}^\tau_{\sigma_0}\setminus \overline{\Sigma}_{\pi/2}$, considering the symmetry we can take $z=|z|e^{{\rm i}\arg z}$ with $\arg z \in (\pi/2,\pi/2+\sigma_0)$.
Then,
\begin{equation}\label{Sh.6.1.1}\begin{split}
e^{-z\tau}=e^{-\tau|z|\cos(\arg z)}e^{-{\rm i}\tau|z|\sin(\arg z)}.
\end{split}\end{equation}
Since $z\in \mathbb{S}^\tau_{\sigma_0}$, it holds that $\tau|z|\sin(\arg z)< \pi+\sigma_0$, $\arg(e^{-z\tau})>\pi-\sigma_0$.
Similarly, for $\arg z \in (-\pi/2-\sigma_0,-\pi/2)$, we have $\arg(e^{-z\tau})<-\pi+\sigma_0$.
Moreover, one can check that, if $|\arg z|\to \frac{\pi}{2}$,
\begin{equation}\label{Sh.6.1.2}\begin{split}
|e^{-z\tau}|=e^{-\tau|z|\cos(\arg z)}
> e^{-(\pi+\sigma_0)\cos(\arg z)/|\sin (\arg z)|}\to 1.
\end{split}\end{equation}
Thus, we can take a small $\sigma_0$ such that for any $z\in \mathbb{S}^\tau_{\sigma_0}\setminus \overline{\Sigma}_{\pi/2}$, with $|\arg z|$ sufficiently close to $\frac{\pi}{2}$, there holds $e^{-z\tau} \in \{\xi \in \mathbb{C}\setminus \overline{\Sigma}_{\pi-\sigma_0}: |\xi|\leq 1+\sigma_0\}$.
By Lemma \ref{lem.5} and $\beta^\alpha_\tau(e^{-z\tau})=\tau^{-\alpha}\varrho(e^{z\tau})$, we complete the proof of the lemma.
\end{proof}
\begin{proof}[The proof of Theorem \ref{thm.6}]
Multiply (\ref{Se.14}) by $\xi^n$ and sum the index from $n=1$ to $\infty$ to arrive at
\begin{equation}\label{Sh.7}\begin{split}
&\sum_{n=1}^{\infty}\xi^n D_\tau^\alpha W_h^{n-\theta}-\sum_{n=1}^{\infty}\xi^n\Delta_h W_h^{n-\theta}
\\=&\sum_{n=1}^{\infty}\xi^n g_h^{n-\theta}
+(1/2-\theta)(\Delta_h v_h+f^0_h)\xi
+\sum_{n=1}^{\infty}(\Delta_h v_h+f_h^0)\xi^n.
\end{split}\end{equation}
Then, by a derivation similar to Theorem \ref{thm.2}, we can obtain
\begin{equation}\label{Sh.8}\begin{split}
\big(\beta_\tau^\alpha(\xi)-\Delta_h\big)W_h(\xi)=\kappa(\xi)(\Delta_h v_h+f_h^0)+g_h(\xi),
\end{split}\end{equation}
where $\kappa(\xi):=\xi\big(\frac{1}{1-\xi}+\frac{1}{2}-\theta\big)/(1-\theta+\theta\xi)$.
Since $\beta_\tau^\alpha(\xi)-\Delta_h$ is invertible for any $\xi \in \{|\xi|\leq 1, \xi \neq 1\}$ (Remark \ref{rem.6} and (\ref{Sh.4})), we get
\begin{equation}\label{Sh.9}\begin{split}
W_h(\xi)=\big(\beta_\tau^\alpha(\xi)-\Delta_h\big)^{-1}\big(\kappa(\xi)(\Delta_h v_h+f_h^0)+g_h(\xi)\big).
\end{split}\end{equation}
One can check $W_h(\xi)$ is analytic in the circle $\{\xi:|\xi|<\varepsilon\}$ for small $\varepsilon>0$.
Then, by Cauchy's integral formula, we obtain
\begin{equation}\label{Sh.10}\begin{split}
W_h^n=\frac{W_h^{(n)}(0)}{n!}
=\frac{1}{2\pi{\rm i}}\int_{|\xi|=\varepsilon}\frac{W_h(\xi)}{\xi^{n+1}}\mathrm{d}\xi.
\end{split}\end{equation}
Let $\xi=e^{-z\tau}$, $z \in \Gamma^\tau_\varepsilon:=\{z=-\ln\varepsilon+{\rm i}y: y\in \mathbb{R}, |y|\leq \pi/\tau\}$ with an increasing imaginary part, we get
\begin{equation}\label{Sh.11}\begin{split}
W_h^n=\frac{\tau}{2\pi{\rm i}}\int_{\Gamma_\varepsilon^\tau}e^{zt_n}W_h(e^{-z\tau})\mathrm{d}z.
\end{split}\end{equation}
Using Lemma \ref{lem.6}, we can conclude $\big(\beta_\tau^\alpha(e^{-z\tau})-\Delta_h\big)^{-1}$ is analytic for $z\in \overline{\mathbb{S}}$, where $\mathbb{S}$ is a region enclosed by contours $\Gamma_{\sigma+\pi/2,\delta}^\tau$, $\Gamma_\varepsilon^\tau$, $\Gamma^\tau_{\pm}:=\mathbb{R}\pm {\rm i}\pi/\tau$ (oriented from left to right), for some small positive parameters $\sigma, \delta \in (0,\sigma_0)$ where $\sigma_0>0$ is independent of $\alpha$ and $\tau$.
Then, $e^{zt_n}W_h(e^{-z\tau})$ is analytic for $z\in \overline{\mathbb{S}}$ as both $\kappa(e^{-z\tau})$ and $g_h(e^{-z\tau})$ are analytic functions for $z\in \overline{\mathbb{S}}$.
We thus can derive by Cauthy's theorem that
\begin{equation}\label{Sh.12}\begin{split}
W_h^n&=\frac{\tau}{2\pi{\rm i}}\int_{\Gamma_\varepsilon^\tau}e^{zt_n}W_h(e^{-z\tau})\mathrm{d}z
=\frac{\tau}{2\pi{\rm i}}\int_{\Gamma_{\sigma+\pi/2,\delta}^\tau}e^{zt_n}W_h(e^{-z\tau})\mathrm{d}z
\\
&\quad-\frac{\tau}{2\pi{\rm i}}\int_{\Gamma_{-}^\tau}e^{zt_n}W_h(e^{-z\tau})\mathrm{d}z
+\frac{\tau}{2\pi{\rm i}}\int_{\Gamma_{+}^\tau}e^{zt_n}W_h(e^{-z\tau})\mathrm{d}z
\\
&=\frac{\tau}{2\pi{\rm i}}\int_{\Gamma_{\sigma+\pi/2,\delta}^\tau}e^{zt_n}W_h(e^{-z\tau})\mathrm{d}z,
\end{split}\end{equation}
where we have used the fact $e^{zt_n}W_h(e^{-z\tau})$ takes the same value on the two lines $\Gamma_{\pm}^\tau$.
By $K(z)$ defined in (\ref{Sh.2}) and letting $\mu(\xi)=\tau\beta_\tau(\xi)\kappa(\xi)$, we complete the proof of the theorem.
\end{proof}
\begin{rem}\label{rem.8}
We remark that if $\theta=\frac{1}{2}$, Lemma \ref{lem.5} is no longer true since by the argument $\sigma_0=\min\{\sigma_1,\sigma_2\}$ where $\sigma_1=0$, we thus obtain $\sigma_0=0$ which contradicts the assumption $\sigma_0 \in (0,\pi/2)$.
Moreover, the function $\kappa(e^{-z\tau})$ in the above theorem will be singular at points $z=\pm {\rm i}\pi/\tau$.
\end{rem}
\subsection{Key lemmas on kernel functions}
We develop some key estimates for functions $\beta_\tau(\xi)$ and $\mu(\xi)$.
In order to develop $\alpha$-robust analysis, we require the constant $c$ is independent of $\alpha$.
\begin{lemma}\label{lem.6.6}
Given $\theta \in (0,1/2)$, $\alpha_0,\tau \in (0,1)$.
There exists a $\sigma_1 =\sigma_1(\theta,\alpha_0)\in (0,\pi/2)$, such that for any $\alpha \in (\alpha_0,1)$ and $\sigma,\delta \in (0,\sigma_1]$, there hold
\begin{equation}\label{Sh.12.1.1}\begin{split}
0<c_0\leq|1+\mu_1e^{-z\tau}|^{\widetilde{\beta}}\leq c_1,\quad
0<c_0\leq|1-\theta+\theta e^{-z\tau}|^{\widetilde{\alpha}}\leq c_1,
\quad \forall z \in \Gamma_{\sigma+\pi/2,\delta}^\tau,
\end{split}\end{equation}
where $\widetilde{\alpha}$ take values from $\{1, 1/\alpha, 1+1/\alpha\}$, $\widetilde{\beta}$ is $1$ or $\alpha$, and constants $c_0$ and $c_1$ depend only on $\theta$ and $\alpha_0$.
\begin{proof}
For any $z\in \Gamma_{\sigma+\pi/2,\delta}$ with small $\sigma,\delta \in(0,\pi/2)$, since $\tau<1$, we have $\pi/\tau>\pi>\delta$.
Let $z=x+{\rm i}y$, $x\in [-\frac{\pi}{\tau}\tan \sigma,\delta]$, $y\in [-\pi/\tau,\pi/\tau]$.
Then, $e^{-z\tau}=e^{-\tau x}e^{-{\rm i}\tau y}$, $|e^{-z\tau}|=e^{-\tau x} \leq e^{\pi\tan \sigma}$.
For given $\theta$ and $\alpha_0$, $\mu_1$ can be bounded by a constant $M(\alpha_0,\theta)<1$.
By taking $e^{\pi \tan \sigma_1}<\frac{1}{M}$, i.e., $\sigma_1<\arctan(\frac{1}{\pi}\ln \frac{1}{M})$, we have $|\mu_1 e^{-z\tau}|=e^{-x\tau}|\mu_1|\leq M e^{\pi\tan \sigma_1} <1$.
Using the triangle inequality, we obtain $|1+\mu_1 e^{-z\tau}|\leq 1+|\mu_1 e^{-z\tau}|<2$ and $|1+\mu_1 e^{-z\tau}|\geq 1-|\mu_1 e^{-z\tau}|\geq 1-M e^{\pi\tan \sigma_1}>0$.
We can similarly analyze the term $|1-\theta+\theta e^{-z\tau}|$ since $|1-\theta+\theta e^{-z\tau}|=(1-\theta)|1+\frac{\theta}{1-\theta} e^{-z\tau}|$ and $\frac{\theta}{1-\theta} \in (0,1)$.
For fixed $\theta$, we take $\sigma_1=\sigma_1(\theta)$ satisfying $e^{\pi\tan \sigma_1}<\frac{\theta}{1-\theta}$, then
$|1+\frac{\theta}{1-\theta} e^{-z\tau}|\leq 2$ and $|1+\frac{\theta}{1-\theta} e^{-z\tau}| \geq 1-\frac{\theta}{1-\theta}|e^{-z\tau}|\geq 1-\frac{\theta}{1-\theta}e^{\pi\tan\sigma_1}>0$.
The proof of the lemma is completed.
\end{proof}
\end{lemma}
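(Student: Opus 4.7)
The plan is to exploit the restriction $|\Im z|\leq \pi/\tau$ on the contour $\Gamma_{\sigma+\pi/2,\delta}^\tau$ to control $|e^{-z\tau}|$ by a constant arbitrarily close to $1$, and then deduce both bounds from the triangle inequality once $|\mu_1|$ and $\theta/(1-\theta)$ are kept strictly less than $1$ uniformly in $\alpha$. First I would analyse the geometry of $\Gamma_{\sigma+\pi/2,\delta}^\tau$: writing $z=x+\mathrm{i}y$, on the circular arc $|z|=\delta$ one has $|x|\leq \delta$, while on the two rays $z=re^{\pm\mathrm{i}(\sigma+\pi/2)}$ the restriction $|y|\leq \pi/\tau$ forces $r\leq \pi/(\tau\cos\sigma)$, and hence $x\geq -\pi\tan\sigma/\tau$. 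Consequently $|e^{-z\tau}|=e^{-\tau x}\leq e^{\pi\tan\sigma}$, a bound that depends on $\sigma$ alone and tends to $1$ as $\sigma\to 0^+$. The assumption $\tau<1$ is used here to guarantee $\pi/\tau>\pi>\delta$ so that the vertical cut-off actually intersects the two rays.

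Next, for the estimate on $|1+\mu_1 e^{-z\tau}|$, I would recall $\mu_1=(\alpha-2\theta)/(\alpha+2\theta)$; for $\alpha\in(\alpha_0,1)$ and $\theta\in(0,1/2)$ one easily checks $|\mu_1|\leq M(\alpha_0,\theta)<1$, e.g. $M=\max\{(1-2\theta)/(1+2\theta),\,|\alpha_0-2\theta|/(\alpha_0+2\theta)\}$. Choosing $\sigma_1$ small enough that $Me^{\pi\tan\sigma_1}<1$ (possible since $M<1$; for instance $\sigma_1<\arctan(\pi^{-1}\ln M^{-1})$), the triangle inequality delivers
\[
1-Me^{\pi\tan\sigma_1}\;\leq\;|1+\mu_1 e^{-z\tau}|\;\leq\;1+Me^{\pi\tan\sigma_1},
\]
with both bounds positive and depending only on $\theta$ and $\alpha_0$. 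For the second factor I would write $|1-\theta+\theta e^{-z\tau}|=(1-\theta)\,|1+\tfrac{\theta}{1-\theta}e^{-z\tau}|$ and repeat the argument using $\theta/(1-\theta)\in(0,1)$, shrinking $\sigma_1$ further if necessary so that $\tfrac{\theta}{1-\theta}e^{\pi\tan\sigma_1}<1$.

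To pass from the base bounds to the powers $\widetilde\beta\in\{1,\alpha\}$ and $\widetilde\alpha\in\{1,1/\alpha,1+1/\alpha\}$, I would use the elementary observation that if $0<a\leq |w|\leq b$ and $p$ ranges over a bounded positive interval $[p_{\min},p_{\max}]$, then $|w|^p$ stays between $\min(a^{p_{\min}},a^{p_{\max}})$ and $\max(b^{p_{\min}},b^{p_{\max}})$. Since $\widetilde\beta\in[\alpha_0,1]$ and $\widetilde\alpha\in[1,1+1/\alpha_0]$, the resulting two-sided bounds depend only on $\theta$ and $\alpha_0$, as required.

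There is no deep obstacle; the step that needs the most care is the \emph{simultaneous} choice of a single $\sigma_1=\sigma_1(\theta,\alpha_0)$ meeting both smallness conditions $Me^{\pi\tan\sigma_1}<1$ and $\tfrac{\theta}{1-\theta}e^{\pi\tan\sigma_1}<1$, together with making the dependence $M=M(\alpha_0,\theta)$ explicit so that the final constants $c_0,c_1$ genuinely depend only on $\theta$ and $\alpha_0$ and not on $\alpha$ or $\tau$. Once these bookkeeping choices are in place, the lemma follows from the elementary estimates above.
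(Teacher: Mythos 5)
Your proposal is correct and follows essentially the same route as the paper's own proof: bound $|e^{-z\tau}|\leq e^{\pi\tan\sigma}$ using $|\Im z|\leq\pi/\tau$, observe $|\mu_1|\leq M(\alpha_0,\theta)<1$ and $\theta/(1-\theta)<1$, and conclude by the triangle inequality. You even state the smallness condition for the second factor in the correct form $\tfrac{\theta}{1-\theta}e^{\pi\tan\sigma_1}<1$ (the paper writes $e^{\pi\tan\sigma_1}<\tfrac{\theta}{1-\theta}$, which is a slip since the left side exceeds $1$), and you make explicit the final step of raising the two-sided bounds to the exponents $\widetilde\alpha,\widetilde\beta$, which the paper leaves implicit.
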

\begin{lemma}\label{lem.7}
Given $\theta \in (0,1/2)$, $\alpha_0,\tau_0 \in (0,1)$ and $\tau<\tau_0$.
Then, there exists a $\sigma_1 =\sigma_1(\theta,\alpha_0)>0$, such that for any $\alpha \in (\alpha_0,1)$, $\sigma,\delta \in (0,\sigma_1]$, and $z\in \Gamma_{\sigma+\pi/2,\delta}^\tau$, there hold
\begin{equation}\label{Sh.13}\begin{split}
&({\rm i})~ |\mu(e^{-z\tau})-1|\leq c\tau^2|z|^2,\quad
({\rm ii})~ |\beta_\tau(e^{-z\tau})-z|\leq c\tau^2|z|^3,\\
&({\rm iii})~ |\beta^\alpha_\tau(e^{-z\tau})-z^\alpha|\leq c\tau^2|z|^{2+\alpha},\quad
({\rm iv})~ c_0|z|\leq |\beta_\tau(e^{-z\tau})|\leq c_1|z|,
\end{split}\end{equation}
where constants $c,c_0$ and $c_1$ depend only on $\theta,\alpha_0$ and $\tau_0$.
\end{lemma}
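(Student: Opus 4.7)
The plan is to establish (iv), (i), and (ii) directly from the explicit rational form of $\beta_\tau$ and $\mu$ by Taylor expansion at $\xi=1$, and to deduce (iii) from (ii) and (iv) by a two-regime argument based on the size of $|z\tau|$.

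First I would collect a few uniform bounds on the contour. On $\Gamma_{\sigma+\pi/2,\delta}^\tau$ the restriction $|\Im z|\leq \pi/\tau$ forces $|z\tau|\leq K:=\pi/\cos\sigma+\delta\tau_0$, and in particular $|e^{-z\tau}-1|\leq e^K|z\tau|$. Moreover the entire function $(1-e^{-w})/w$ vanishes only at $w=2\pi ik$, $k\neq 0$; since on the contour $|\Im(z\tau)|\leq \pi$ and the rays $\arg z=\pm(\sigma+\pi/2)$ stay at distance $2\pi\sin\sigma>0$ from $\pm 2\pi i$, the quotient $(1-e^{-z\tau})/(z\tau)$ is bounded above and below by positive constants depending only on $\sigma,\delta,\tau_0$. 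Writing $\beta_\tau(e^{-z\tau})/z$ as the product of $\mu_0^{1/\alpha}$, $(1-e^{-z\tau})/(z\tau)$, $1/(1+\mu_1 e^{-z\tau})$, and $(1-\theta+\theta e^{-z\tau})^{-1/\alpha}$, and noting that $\mu_0^{1/\alpha}=2\alpha/(\alpha+2\theta)$ is pinched between positive constants for $\alpha\in(\alpha_0,1)$, each factor is uniformly bounded above and below by Lemma \ref{lem.6.6} and the preceding observation; this yields (iv) with constants free of $\alpha$.

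For (i) and (ii), the key computation is that the low-order Taylor coefficients of $\mu$ and of $B(\xi):=\tau\beta_\tau(\xi)$ at $\xi=1$ take exactly the values required to produce the claimed error orders. Logarithmic differentiation of $\mu$, together with the identities $1+\mu_1=2\alpha/(\alpha+2\theta)$ and $1-\mu_2=2/(3-2\theta)$, gives $\mu(1)=1$ and, after a short simplification cancelling $\theta(\alpha+1)/\alpha$ against itself, $\mu'(1)=0$. Writing $B(\xi)=(1-\xi)C(\xi)$ with $C$ analytic at $\xi=1$, a parallel calculation gives $C(1)=1$ and $C'(1)=-\mu_1/(1+\mu_1)-\theta/\alpha=-1/2$, hence $B(1)=0$, $B'(1)=-1$, and $B''(1)=-2C'(1)=1$. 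Higher derivatives of $\mu$ and $B$ are uniformly bounded on a fixed neighborhood of $\xi=1$ for $\alpha\in(\alpha_0,1)$ because $1/\alpha$ appears only to a bounded power and the denominators are controlled by Lemma \ref{lem.6.6}. Taylor's theorem with remainder then yields $|\mu(\xi)-1|\leq c|\xi-1|^2$ and $|B(\xi)+(\xi-1)-\tfrac12(\xi-1)^2|\leq c|\xi-1|^3$. Substituting $\xi=e^{-z\tau}$ and expanding $e^{-u}-1=-u+u^2/2+O(u^3)$ with $u=z\tau$, the quadratic terms in the expansion of $B(e^{-u})$ cancel exactly because of the specific values of $B'(1)$ and $B''(1)$, leaving $B(e^{-u})=u+O(u^3)$. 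Dividing by $\tau$ gives (ii), and the analogous substitution in $\mu$ gives (i).

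The main obstacle is (iii), because a naive expansion $(\beta_\tau/z)^\alpha-1\approx \alpha(\beta_\tau/z-1)$ requires $|\beta_\tau/z-1|$ to be small, whereas estimate (ii) only yields $|\beta_\tau/z-1|\leq c(z\tau)^2$, which on the contour may be of order $cK^2$ and need not be less than $1$. I would resolve this by splitting into two regions: fix a small $\epsilon_0>0$ with $c\epsilon_0^2\leq 1/2$. In the region $|z\tau|\leq\epsilon_0$, the bound $|\beta_\tau/z-1|\leq 1/2$ lets us apply the integral representation $(1+w)^\alpha-1=\alpha w\int_0^1(1+sw)^{\alpha-1}\mathrm{d}s$ with $|1+sw|\geq 1/2$ to obtain $|(1+w)^\alpha-1|\leq 2\alpha|w|$; multiplying by $|z|^\alpha$ and invoking (ii) gives the bound asserted in (iii). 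In the complementary region $|z\tau|>\epsilon_0$, estimate (iv) gives $|\beta_\tau^\alpha|,|z^\alpha|\leq c|z|^\alpha$, so $|\beta_\tau^\alpha-z^\alpha|\leq 2c|z|^\alpha\leq(2c/\epsilon_0^2)\tau^2|z|^{2+\alpha}$. The $\alpha$-robustness of (iii) is preserved throughout because $\alpha$ enters only through factors $\alpha\leq 1$ or through powers $1/\alpha\leq 1/\alpha_0$ already controlled.
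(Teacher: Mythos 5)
Your proposal is correct in substance but organizes the argument quite differently from the paper, and in several places more cleanly. The paper proves (i)--(iii) by brute-force expansion in powers of $z\tau$: it writes out the Taylor series of $e^{-z\tau}$ and $(1-\theta+\theta e^{-z\tau})^{\widetilde\alpha}$, exhibits the explicit second- and third-order coefficients (e.g. $c(\theta,\alpha)=|\theta^2-(1-\theta)(\theta+\alpha^2)|/[\alpha(\alpha+2\theta)]$), and controls the series tails termwise; for (iii) it expands $\beta_\tau^\alpha$ directly, and for (iv) it deduces the upper bound from (ii) and the lower bound from the cited estimate $c_0'|z|<|({1-e^{-z\tau}})/{\tau}|<c_1'|z|$. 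You instead encode the cancellations as Taylor data at $\xi=1$ via logarithmic differentiation ($\mu(1)=1$, $\mu'(1)=0$, $C(1)=1$, $C'(1)=-\tfrac{\alpha-2\theta}{2\alpha}-\tfrac{\theta}{\alpha}=-\tfrac12$, hence $B'(1)=-1$, $B''(1)=1$) --- I checked these and they are right, and they make the exact cancellation of the quadratic terms in $B(e^{-u})$ transparent rather than a miracle of bookkeeping. Your derivation of (iii) from (ii) and (iv) via $(1+w)^\alpha-1=\alpha w\int_0^1(1+sw)^{\alpha-1}\,\mathrm{d}s$ on $|w|\le 1/2$ is a genuine simplification over the paper's separate expansion of $\beta_\tau^\alpha$, and your two-regime split at $|z\tau|=\epsilon_0$ is exactly the device the paper uses (its case $|z|\tau\ge\epsilon$). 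Your direct proof of (iv) from the factorized form is also fine; the only quibble is that your lower bound on $|(1-e^{-w})/w|$ should be justified by the truncation $|\Im w|\le\pi$ keeping $w$ at distance at least $\pi$ from the zeros $2\pi\mathrm{i}k$, $k\neq 0$, rather than by the distance $2\pi\sin\sigma$ of the rays from $\pm 2\pi\mathrm{i}$, which degenerates as $\sigma\to 0$.

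The one gap to close: your Taylor bounds $|\mu(\xi)-1|\le c|\xi-1|^2$ and $|B(\xi)+(\xi-1)-\tfrac12(\xi-1)^2|\le c|\xi-1|^3$ are established only on a fixed neighborhood of $\xi=1$, yet on the contour $\xi=e^{-z\tau}$ can be far from $1$ (e.g. $\xi=-1$ at $z=\pm\mathrm{i}\pi/\tau$), so as written (i) and (ii) are only proved in the regime $|z\tau|\le\epsilon_0$. You explicitly handle the complementary regime for (iii) but not for (i) and (ii); the fix is the same one-liner you already use there: for $|z\tau|>\epsilon_0$ one has $\tau^2|z|^2\ge\epsilon_0^2$, and $|\mu(e^{-z\tau})-1|\le|\mu(e^{-z\tau})|+1$ and $|\beta_\tau(e^{-z\tau})-z|\le(c_1+1)|z|$ are bounded by Lemma \ref{lem.6.6} and (iv), which absorbs them into $c\tau^2|z|^2$ and $c\tau^2|z|^3$ respectively. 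With that sentence added, the proof is complete and retains the $\alpha$-robustness (all your constants involve $\alpha$ only through $\alpha\le 1$, $1/\alpha\le 1/\alpha_0$, or $1+\mu_1=2\alpha/(\alpha+2\theta)$ pinched between positive constants).
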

\begin{proof}
Take $\sigma_1(\theta,\alpha_0)$ such that (\ref{Sh.12.1.1}) hold.
Let $\epsilon:=\min\{\epsilon_0,\ln\frac{1-\theta}{\theta}\}$ where $\epsilon_0$ is positive and is sufficiently small.
If $|z|\tau<\epsilon$, using the Taylor expansion, we have
\begin{equation}\label{Sh.13.1}\begin{split}
e^{-z\tau}&=1-z\tau+\frac{1}{2}z^2\tau^2+\sum_{k=3}^{\infty}\frac{(-1)^k}{k!}z^k\tau^k,
\\
(1-\theta+\theta e^{-z\tau})^{\widetilde{\alpha}}
&=1+\widetilde{\alpha}(-\theta+\theta e^{-z\tau})
+\frac{1}{2}\widetilde{\alpha}(\widetilde{\alpha}-1)(-\theta+\theta e^{-z\tau})^2
\\&\quad
+\sum_{k=3}^{\infty}
\begin{pmatrix}
  \widetilde{\alpha} \\
  k
\end{pmatrix}(-\theta+\theta e^{-z\tau})^k
\\&=1-\widetilde{\alpha}\theta z\tau
+\frac{1}{2}\widetilde{\alpha}\theta[1+(\widetilde{\alpha}-1)\theta](z\tau)^2
+\sum_{k=3}^{\infty}A_k(\widetilde{\alpha},\theta)(z\tau)^k,
\end{split}\end{equation}
where $\widetilde{\alpha}$ takes $1/\alpha$ or $1+1/\alpha$.
We have used the analyticity of $(1-\theta+\theta e^{-z\tau})^{\widetilde{\alpha}}$ with respect to $z$ in (\ref{Sh.13.1}), since for $z$ satisfying $|z|\tau<\ln \frac{1-\theta}{\theta}$, it holds
\begin{equation}\label{Sh.13.1.1}\begin{split}
|e^{-z\tau}|\leq e^{|z|\tau}<\frac{1-\theta}{\theta},~
|1-\theta+\theta e^{-z\tau}|=\theta\bigg|\frac{1-\theta}{\theta}+e^{-z\tau}\bigg|
\geq \theta \bigg(\frac{1-\theta}{\theta}-|e^{-z\tau}|\bigg)>0.
\end{split}\end{equation}
We note that $A_k(\widetilde{\alpha},\theta)$ can be bounded by some constant depending only on $\alpha_0$ and $\theta$.
Combining (\ref{Sh.6}) with the above expansions and using Lemma \ref{lem.6.6}, we thus obtain
\begin{equation}\label{Sh.13.2}\begin{split}
|\mu(e^{-z\tau})-1|
&\leq c(\theta,\alpha_0)
\big|\mu_0^{\frac{1}{\alpha}}(3/2-\theta)e^{-z\tau}(1-\mu_2e^{-z\tau})
\\&\quad-(1+\mu_1e^{-z\tau})(1-\theta+\theta e^{-z\tau})^{1+\frac{1}{\alpha}}\big|
\\&\leq
c(\theta,\alpha_0)\bigg(c(\theta,\alpha)\tau^2|z|^2
+\tau^2|z|^2\sum_{k=3}^{\infty}|\widetilde{A}_k(\alpha,\theta)|(\tau|z|)^{k-2}\bigg),
\end{split}\end{equation}
where $\widetilde{A}_k(\alpha,\theta)$ can be bounded by some constant depending only on $\alpha_0$ and $\theta$.
The constant $c(\theta,\alpha)$ is
\begin{equation}\label{Sh.13.3}\begin{split}
c(\theta,\alpha)=\frac{|\theta^2-(1-\theta)(\theta+\alpha^2)|}{\alpha(\alpha+2\theta)}\leq c(\theta,\alpha_0).
\end{split}\end{equation}
Therefore, we have $|\mu(e^{-z\tau})-1| \leq c(\theta,\alpha_0)\tau^2|z|^2$ when $|z|\tau<\epsilon$.
Similarly, for $\beta_\tau(e^{-z\tau})$ and $\beta^\alpha_\tau(e^{-z\tau})$, we have
\begin{equation}\label{Sh.13.4}\begin{split}
&\quad|\tau\beta_\tau(e^{-z\tau})-z\tau|
\\&\leq
c(\theta,\alpha_0)\big|\mu_0^{\frac{1}{\alpha}}(1-e^{-z\tau})-z\tau(1+\mu_1e^{-z\tau})(1-\theta+\theta e^{-z\tau})^\frac{1}{\alpha}\big|
\\&\leq
c(\theta,\alpha_0)\bigg(\bar{c}_0(\theta,\alpha)\tau^3|z|^3
+\tau^3|z|^3\sum_{k=4}^{\infty}|\widetilde{B}_k(\alpha,\theta)|(\tau|z|)^{k-3}\bigg),
\\
&\quad|\tau^\alpha\beta^\alpha_\tau(e^{-z\tau})-z^\alpha\tau^\alpha|\leq
c(\theta,\alpha_0)
|z|^\alpha\tau^\alpha\bigg|\mu_0\bigg(\frac{1-e^{-z\tau}}{z\tau}\bigg)^\alpha
\\&\quad-(1+\mu_1e^{-z\tau})^\alpha(1-\theta+\theta e^{-z\tau})\bigg|
\\&\leq
c(\theta,\alpha_0)|z|^{\alpha}\tau^{\alpha}\bigg(\bar{c}_1(\theta,\alpha)\tau^2|z|^2+
\tau^2|z|^2\sum_{k=3}^{\infty}|\widetilde{C}_k(\alpha,\theta)|(\tau|z|)^{k-2}\bigg),
\end{split}\end{equation}
where $\widetilde{B}_k(\alpha,\theta), \widetilde{C}_k(\alpha,\theta)$ can be bounded by some constants depending only on $\alpha_0$ and $\theta$, and
\begin{equation}\label{Sh.13.5}\begin{split}
\bar{c}_0(\theta,\alpha)&=\frac{|\alpha^2+6\alpha\theta-6\alpha\theta^2-6\theta^2|}{6\alpha(\alpha+2\theta)}\leq c(\theta,\alpha_0),
\\
\bar{c}_1(\theta,\alpha)&=\frac{|\alpha^2+6\alpha\theta-6\alpha\theta^2-6\theta^2|}{12\alpha}\bigg(\frac{2\alpha}{\alpha+2\theta}\bigg)^\alpha
\leq c(\theta,\alpha_0).
\end{split}\end{equation}
Otherwise, if $|z|\tau\geq\epsilon$, by Lemma \ref{lem.6.6} and the fact $e^{-z\tau}$ can be bounded by a constant depending only on $\theta$ and $\alpha_0$, we have the following estimates
\begin{equation}\label{Sh.13.100}\begin{split}
|\mu(e^{-z\tau})-1|&\leq |\mu(e^{-z\tau})|+1
\\&\leq c(\theta,\alpha_0) \frac{2\alpha}{\alpha+2\theta}(3/2-\theta)|e^{-z\tau}||1-\mu_2 e^{-z\tau}|+1
\\
&\leq c(\theta,\alpha_0)\tau^2|z|^2,
\\
|\tau\beta_\tau(e^{-z\tau})-z\tau|
&\leq |\tau\beta_\tau(e^{-z\tau})|+\tau|z|
\leq c(\theta,\alpha_0) \frac{2\alpha}{\alpha+2\theta}|1-e^{-z\tau}|  +\tau|z|
\\&\leq c(\theta,\alpha_0,\tau_0)\tau^3|z|^3,
\\
|\tau^\alpha\beta^\alpha_\tau(e^{-z\tau})-z^\alpha\tau^\alpha|
&\leq |\tau^\alpha\beta^\alpha_\tau(e^{-z\tau})|+\tau^\alpha|z|^\alpha
\\&\leq  c(\theta,\alpha_0)\bigg(\frac{2\alpha}{\alpha+2\theta}\bigg)^\alpha |1-e^{-z\tau}|^\alpha  +\tau^\alpha|z|^\alpha
\\&\leq c(\theta,\alpha_0,\tau_0)\tau^{2+\alpha}|z|^{2+\alpha}.
\end{split}\end{equation}
We thus have proved (i)-(iii).
For (iv), by (ii) we get $|\beta_\tau(e^{-z\tau})/z-1|\leq c\tau^2|z|^2
\leq c\pi^2/\cos^2 \sigma_1
\leq c(\theta,\alpha_0,\tau_0)$, $|\beta_\tau(e^{-z\tau})/z| \leq |\beta_\tau(e^{-z\tau})/z-1|+1\leq c(\theta,\alpha_0,\tau_0)$.
Considering $c_0'|z|<\big|\frac{1-e^{-z\tau}}{\tau}\big|<c_1'|z|$(see \cite{JinLiZh1}) where $c_0'$ and $c_1'$ are independent of $\alpha,\theta$, by Lemma \ref{lem.6.6} again, we complete the proof of (iv).
\end{proof}
\par
Now, based on the above lemmas, we can prove Theorems \ref{thm.4} and \ref{thm.5} in exactly the same way as in \cite{JinLiZh1}, which is omitted here for space reasons.
\section{Fast algorithms}\label{sec.fast}
We develop two fast algorithms for the fully discrete scheme (\ref{Se.9}) in this section based on the algorithms in \cite{Schadle} and \cite{Zeng1}, respectively.
To start with, we reformulate the generating function (\ref{Se.8}) into a contour integral by two different strategies,
\begin{equation}\label{Fa.1}\begin{split}
(\textbf{I})&\quad\tau^{-\alpha}\omega(\xi)=\tau^{-\alpha}\sum_{n=0}^{\infty}\omega_n\xi^n
=\frac{\tau}{2\pi{\rm i}}\int_{\mathcal{C}}\bigg[\frac{1-\xi}{\frac{1}{2}(1+\xi)+\frac{\theta}{\alpha}(1-\xi)}-\lambda\tau\bigg]^{-1}F^{(1)}(\lambda)\mathrm{d}\lambda,
\\
(\textbf{II})&\quad\tau^{-\alpha}\omega(\xi)=\tau^{-\alpha}\sum_{n=0}^{\infty}\omega_n\xi^n
=F^{(2)}\bigg(\frac{1-\xi}{\tau}\bigg)
=\frac{\tau}{2\pi{\rm i}}\int_{\mathcal{C}}(1-\xi-\lambda\tau)^{-1}F^{(2)}(\lambda)\mathrm{d}\lambda,
\end{split}\end{equation}
where $F^{(1)}(\lambda)=\lambda^\alpha$, $F^{(2)}(\lambda)=[\frac{1}{\lambda}+(\frac{\theta}{\alpha}-\frac{1}{2})\tau]^{-\alpha}$ and $\mathcal{C}$ is a Talbot contour \cite{Zeng1} defined by
\begin{equation}\label{Fa.2}\begin{split}
(-\pi,\pi)\to \mathcal{C}: \vartheta \mapsto \lambda(\vartheta,\varsigma)=\varsigma\big((\vartheta \cot(\vartheta)+{\rm i} \kappa\vartheta)\nu+\iota\big),
\\
\kappa=0.5653, ~\nu=0.6443, ~\iota=-0.4814.
\end{split}\end{equation}
Let $e^{(1)}_n(z)$ and $e^{(2)}_n(z)$ satisfy
\begin{equation}\label{Fa.3}\begin{split}
\sum_{n=0}^{\infty}e^{(1)}_n(z)\xi^n
&=\bigg[\frac{1-\xi}{\frac{1}{2}(1+\xi)+\frac{\theta}{\alpha}(1-\xi)}-z\bigg]^{-1},
\\
\sum_{n=0}^{\infty}e^{(2)}_n(z)\xi^n
&=(1-\xi-z)^{-1}.
\end{split}\end{equation}
Then, by appealing to the partial fraction decomposition method, we can write, for $n\geq1 $,
\begin{equation}\label{Fa.4}\begin{split}
e^{(1)}_n(z)=[r^{(1)}(z)]^n q^{(1)}(z),
\quad
r^{(1)}(z)=\frac{1+z(\frac{1}{2}-\frac{\theta}{\alpha})}{1-z(\frac{1}{2}+\frac{\theta}{\alpha})},
\\
q^{(1)}(z)=\frac{1}{[1-z(\frac{1}{2}+\frac{\theta}{\alpha})][1+z(\frac{1}{2}-\frac{\theta}{\alpha})]},
\end{split}\end{equation}
and
\begin{equation}\label{Fa.4.1}\begin{split}
e^{(2)}_n(z)=[r^{(2)}(z)]^n q^{(2)}(z),
\quad
r^{(2)}(z)=\frac{1}{1-z},
\quad
q^{(2)}(z)=\frac{1}{1-z}.
\end{split}\end{equation}
Combining (\ref{Fa.1}) with (\ref{Fa.3}), we obtain
\begin{equation}\label{Fa.5}\begin{split}
\omega_n=\frac{\tau^{\alpha+1}}{2\pi{\rm i}}\int_{\mathcal{C}}e^{(i)}_n(\tau\lambda)F^{(i)}(\lambda)\mathrm{d}\lambda,\quad i=1,2.
\end{split}\end{equation}
To efficiently estimate $\omega_n$ by (\ref{Fa.5}), we divide the time interval into fast-increasing subintervals denoted by $I_{\ell}=[B^{\ell-1}\tau,(2B^\ell-2)\tau]$ where $B>1$ is an integer.
Let $T_\ell$ be the right end point of $I_\ell$.
If $n\tau \in I_{\ell}$ for some $\ell \geq 1$, we employ the trapezoidal rule to (\ref{Fa.5}) and obtain
\begin{equation}\label{Fa.6}\begin{split}
\omega_n \approx
\tau^{\alpha+1}\sum_{j=-K_{\mathcal{C}}}^{K_{\mathcal{C}}}w^{(\ell)}_j e^{(i)}_n(\tau\lambda_j^{(\ell)})F^{(i)}(\lambda_j^{(\ell)}),\quad i=1,2,
\end{split}\end{equation}
where
\begin{equation}\label{Fa.7}\begin{split}
w^{(\ell)}_j=\frac{\partial_\vartheta \lambda(\vartheta,\frac{K_{\mathcal{C}}}{T_\ell})}{2{\rm i}(K_{\mathcal{C}}+1)},
\quad
\lambda_j^{(\ell)}=\lambda(\vartheta_j,\frac{K_{\mathcal{C}}}{T_\ell}),\quad
\vartheta_j=\frac{j\pi}{K_{\mathcal{C}}+1}.
\end{split}\end{equation}
In our numerical tests, $B=5$ is taken, since with $K_{\mathcal{C}}=30$, the right-hand side of (\ref{Fa.6}) approximates $\omega_n$ quite well for $n\geq n_0$, e.g., $n_0=50$.
We remark that the algorithms ($\textbf{I}$) and $(\textbf{II})$ differ from one another in the functions $e_n^{(i)}(z)$ and $F^{(i)}(\lambda)$.
The contour integrals in (\ref{Fa.1}) require that the singularities of $e_n^{(i)}(z)$ lie to the right of the contour, while the singularities of $F^{(i)}(\lambda)$ are to the opposite side.
In Fig.\ref{C6} (a), we plot several Talbot contours $\mathcal{C}$.
Obviously, the singularities of $F^{(1)}(\lambda)$ and $e^{(2)}(z)$ meet the requirement.
\par
For $e^{(1)}(z)$ of (\textbf{I}), the singularity is
\begin{equation}\label{Fa.8}\begin{split}
z=\frac{1}{\frac{1}{2}+\frac{\theta}{\alpha}}.
\end{split}\end{equation}
If $\alpha \to 0$, we should take $\theta$ such that $z$ is bounded away from $0$ to meet the requirement.
Considering the first few weights cannot be accurately calculated by (\ref{Fa.6})\cite{Schadle,Zeng1}, we find numerically when $\frac{\theta}{\alpha} \in (\frac{1}{7},2)$ under the setting $B=5, K_{\mathcal{C}}=30$,
the absolute error of approximation (\ref{Fa.6}) is around $10^{-13}$ for $n\geq n_0=50$, as is shown in Fig.\ref{C7}.
\par
For $F^{(2)}(\lambda)$ of (\textbf{II}), the singularity is
\begin{equation}\label{Fa.9}\begin{split}
\lambda=
\begin{cases}
  \displaystyle\frac{1}{\tau}\frac{1}{\frac{1}{2}-\frac{\theta}{\alpha}}, & \theta\neq \frac{\alpha}{2}, \\
  0, & \theta = \frac{\alpha}{2}.
\end{cases}
\end{split}\end{equation}
We thus choose $\theta$ satisfying $\theta \in [\frac{\alpha}{2},\frac{1}{2}]$ such that $z=\lambda\tau\leq 0$.
Fig.\ref{C8} illustrates the impacts of $\theta$ on the absolute errors of weights.
We note for $\alpha=0.99$, when $\theta=0.2$ or $0.4$, we can still get a high accuracy although $\theta \leq \frac{\alpha}{2}$ for both cases.
\par
Now we are ready to present the fast algorithm based on the formula (\ref{Fa.6}).
\\
\\
\begin{tabular}{p{0.95\textwidth}}
\hline
\textbf{Algorithm.} Fast algorithm for problem (\ref{Se.14}).\\
\hline
1. Choose $B=5$, $K_{\mathcal{C}}=30$.
Input the parameters $\omega_j^{(\ell)}, \lambda_j^{(\ell)}$ and $\vartheta_j$ defined in (\ref{Fa.7}).
\\
2. For each $n\geq 1$, let $n=b_0>b_1>\cdots>b_{L-1}>b_L=0$, where $b_i$ can be calculated by a pseudocode in \cite{Schadle}.
\\
3. For some $\mathcal{M}>0$ (e.g., $\mathcal{M}=2$), if $L>\mathcal{M}$, re-express $D_\tau^\alpha W_h^{n-\theta}$ as
$$D_\tau^\alpha W_h^{n-\theta}=\sum_{\ell=0}^{\mathcal{M}}\widetilde{W}_n^{(\ell)}+\sum_{\ell=\mathcal{M}+1}^{L}\widetilde{W}_n^{(\ell)}$$
where
$$\widetilde{W}_n^{(\ell)}=
\begin{cases}
  \tau^{-\alpha}\omega_0 W_h^n, & \ell=0, \\
  \displaystyle\tau^{-\alpha}\sum_{k=b_\ell}^{b_{\ell-1}-1}\omega_{n-k}W_h^k, & \ell=1,2,\cdots,L.
\end{cases}
$$
4. Replace $\omega_n (n\geq 1)$ in $\widetilde{W}_n^{(\ell)}$ by the approximation formula (\ref{Fa.6}) to obtain
$$
\widetilde{W}_n^{(\ell)}\approx \sum_{j=-K_{\mathcal{C}}}^{K_{\mathcal{C}}}\omega_j^{(\ell)} \big[r^{(i)}(\tau\lambda_j^{(\ell)})\big]^{n-(b_{\ell-1}-1)}y_j^{(i)}F^{(i)}(\lambda_j^{(\ell)}),\quad i=1,2,
$$
where
$
y_j^{(i)}=y_j^{(i)}(b_\ell,b_{\ell-1},\lambda_j^{(\ell)})=\tau\sum_{k=b_\ell}^{b_{\ell-1}-1}e_{(b_{\ell-1}-1)-k}^{(i)}(\tau\lambda_j^{(\ell)})W_h^k
$ satisfies some recursive formula.
\\
5. For those $n$ leading to $L \leq \mathcal{M}$, standard algorithm is adopt to get $W_h^n$.
If $L > \mathcal{M}$, problem (\ref{Se.14}), according to Steps $3$-$4$, can be reformulated as
$$
\widetilde{W}_n^{(0)}-(1-\theta)\Delta_h W_h^n=\theta \Delta_hW_h^{n-1}
-\sum_{\ell=1}^{\mathcal{M}}\widetilde{W}_n^{(\ell)}
-\sum_{\ell=\mathcal{M}+1}^{L}\widetilde{W}_n^{(\ell)}
+f_h^{n-\theta}+\Delta_h v_h.
$$
\\
\hline
\end{tabular}
\begin{figure}[htbp]
\centering
\subfigure[]{
\begin{minipage}[t]{0.5\linewidth}
\centering
\includegraphics[width=0.9\textwidth]{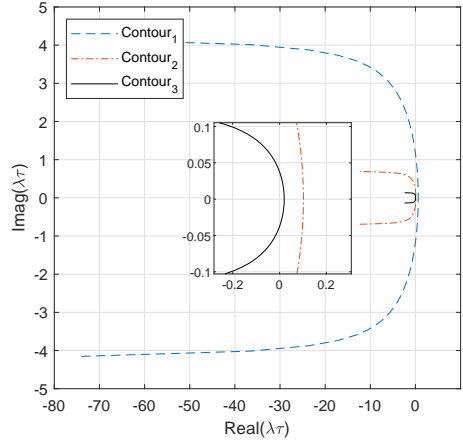}
\end{minipage}%
}%
\centering
\caption{(a) Talbot contour$_\ell$: $\lambda(\vartheta,\frac{K_{\mathcal{C}}}{T_\ell})$ with $K_{\mathcal{C}}=30$, $\ell=1,2,3$ and $B=5$.}\label{C6}
\end{figure}
\begin{figure}[htbp]
\centering
\subfigure[]{
\begin{minipage}[t]{0.45\linewidth}
\centering
\includegraphics[width=1\textwidth]{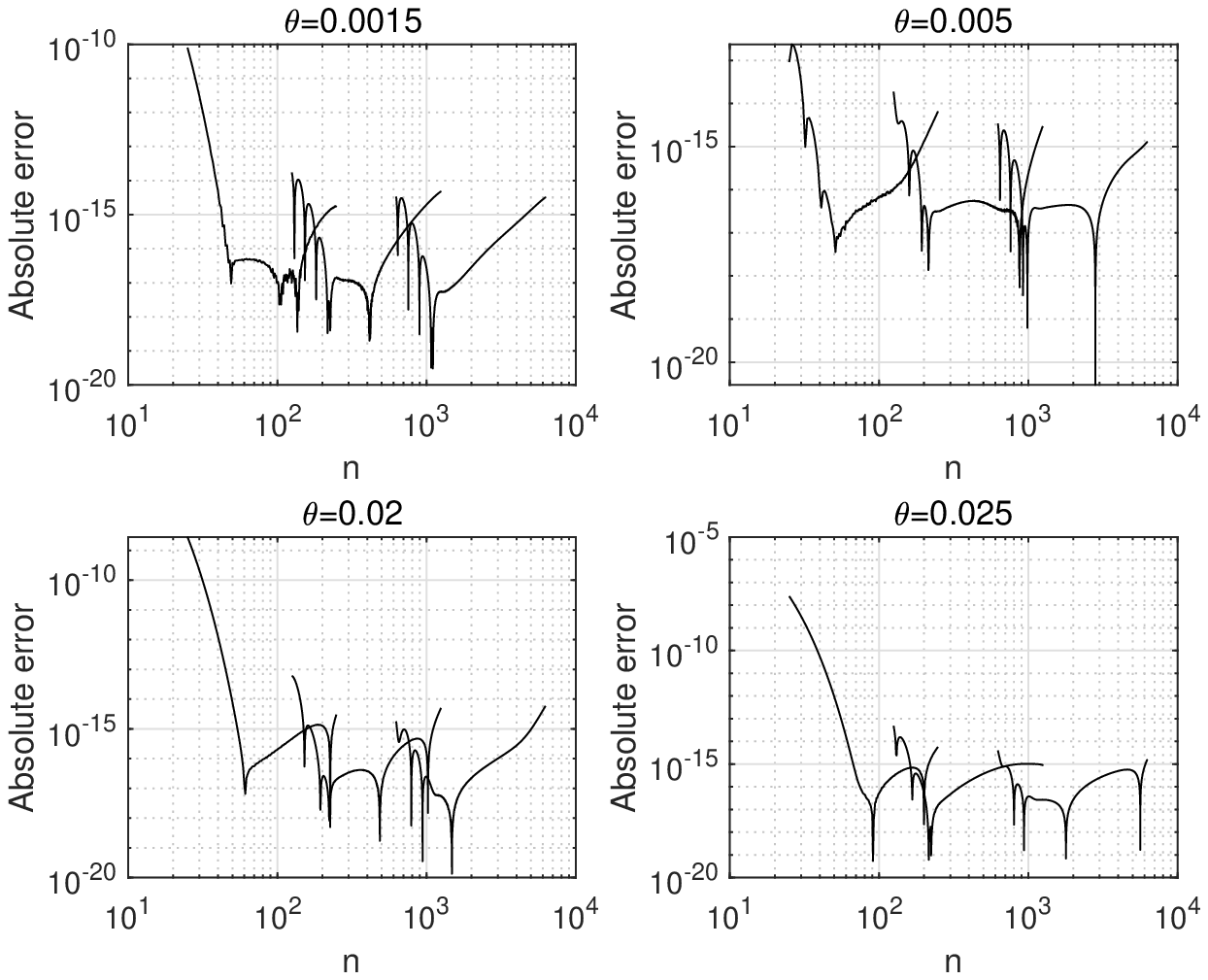}
\end{minipage}%
}%
\subfigure[]{
\begin{minipage}[t]{0.45\linewidth}
\centering
\includegraphics[width=1\textwidth]{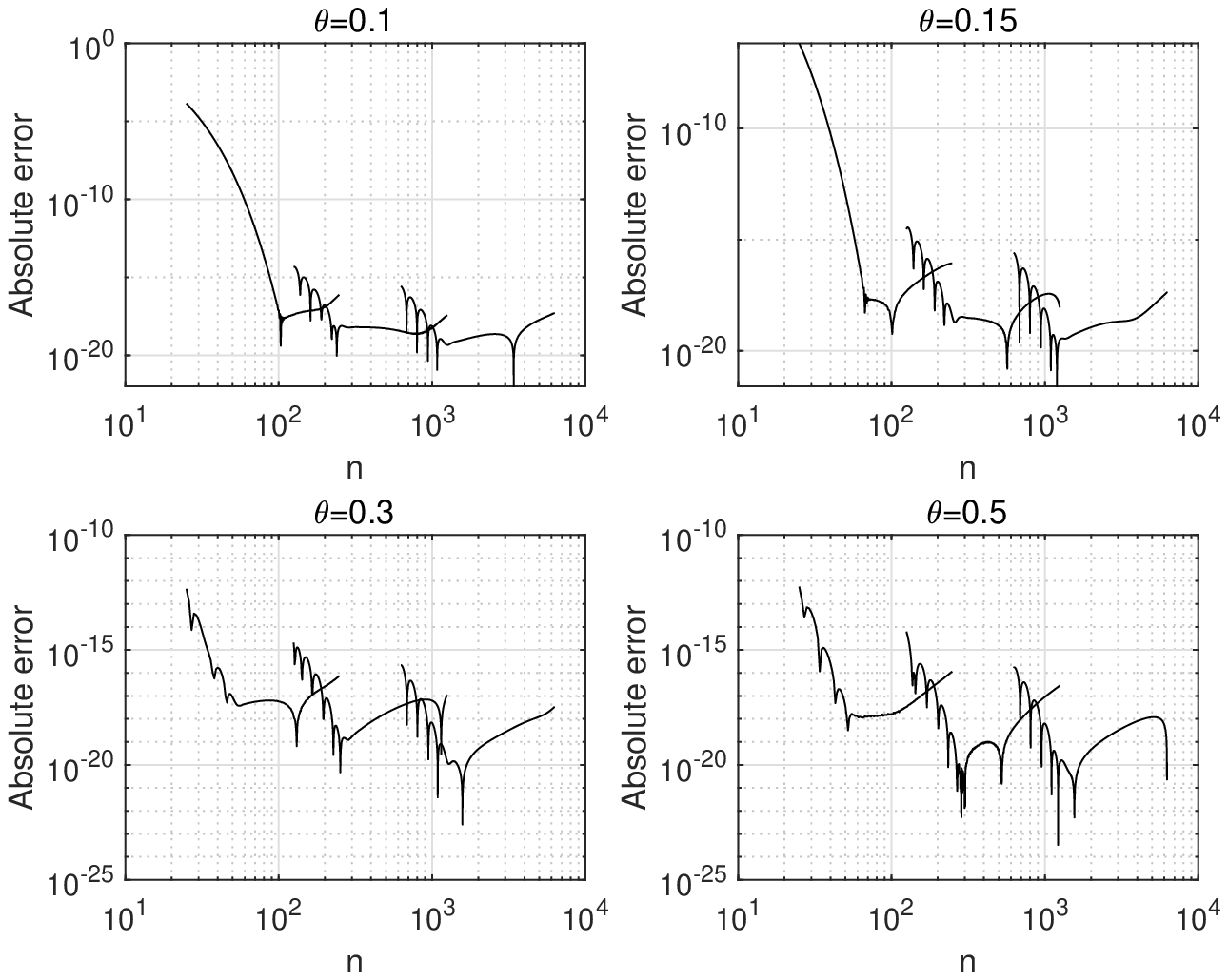}
\end{minipage}%
}%
\centering
\caption{(a) Absolute errors for $\omega_n$ of (\textbf{I}), $\alpha=0.01$. (b) Absolute errors for $\omega_n$ of (\textbf{I}), $\alpha=0.99$.}\label{C7}
\end{figure}
\begin{figure}[htbp]
\centering
\subfigure[]{
\begin{minipage}[t]{0.45\linewidth}
\centering
\includegraphics[width=1\textwidth]{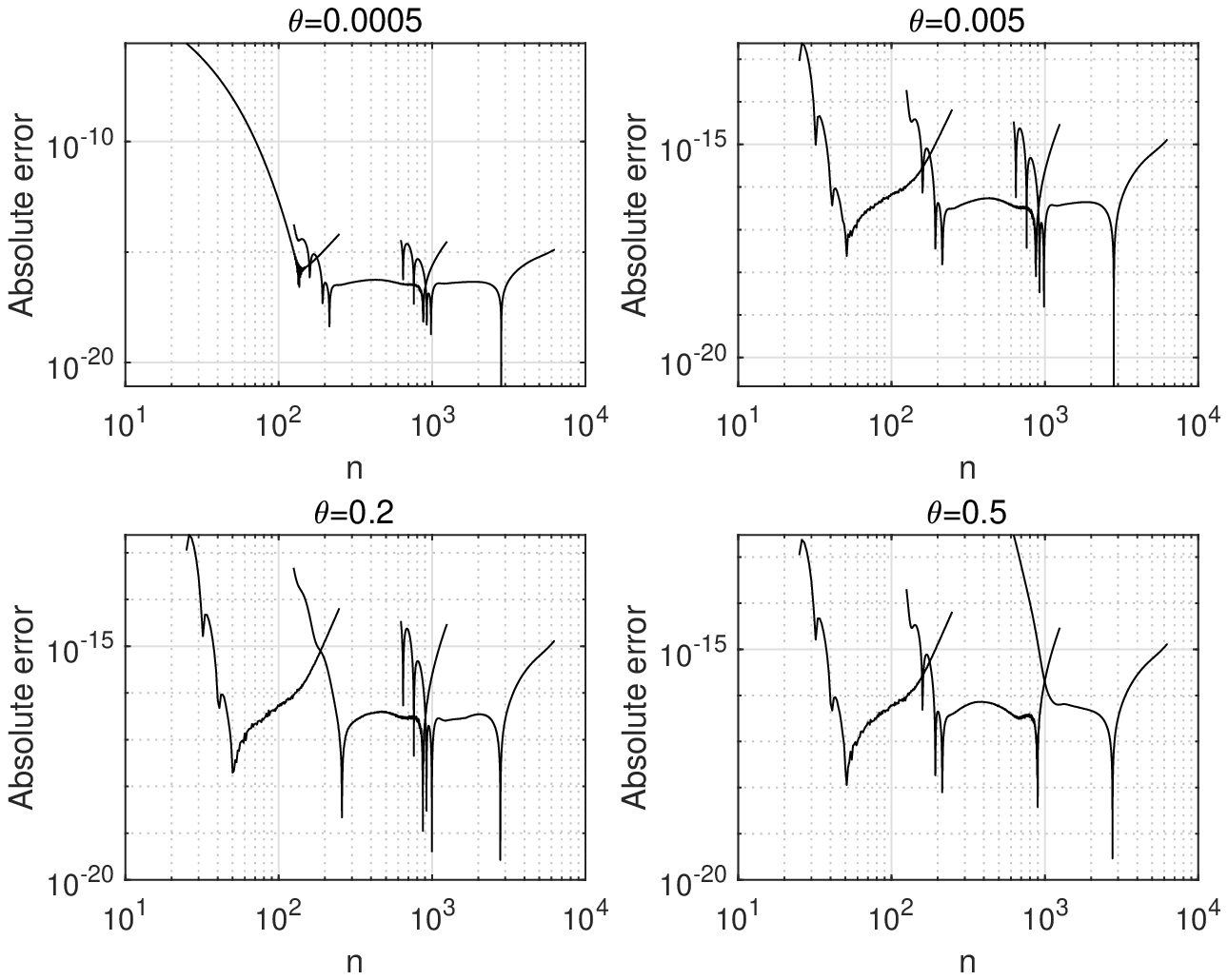}
\end{minipage}%
}%
\subfigure[]{
\begin{minipage}[t]{0.45\linewidth}
\centering
\includegraphics[width=1\textwidth]{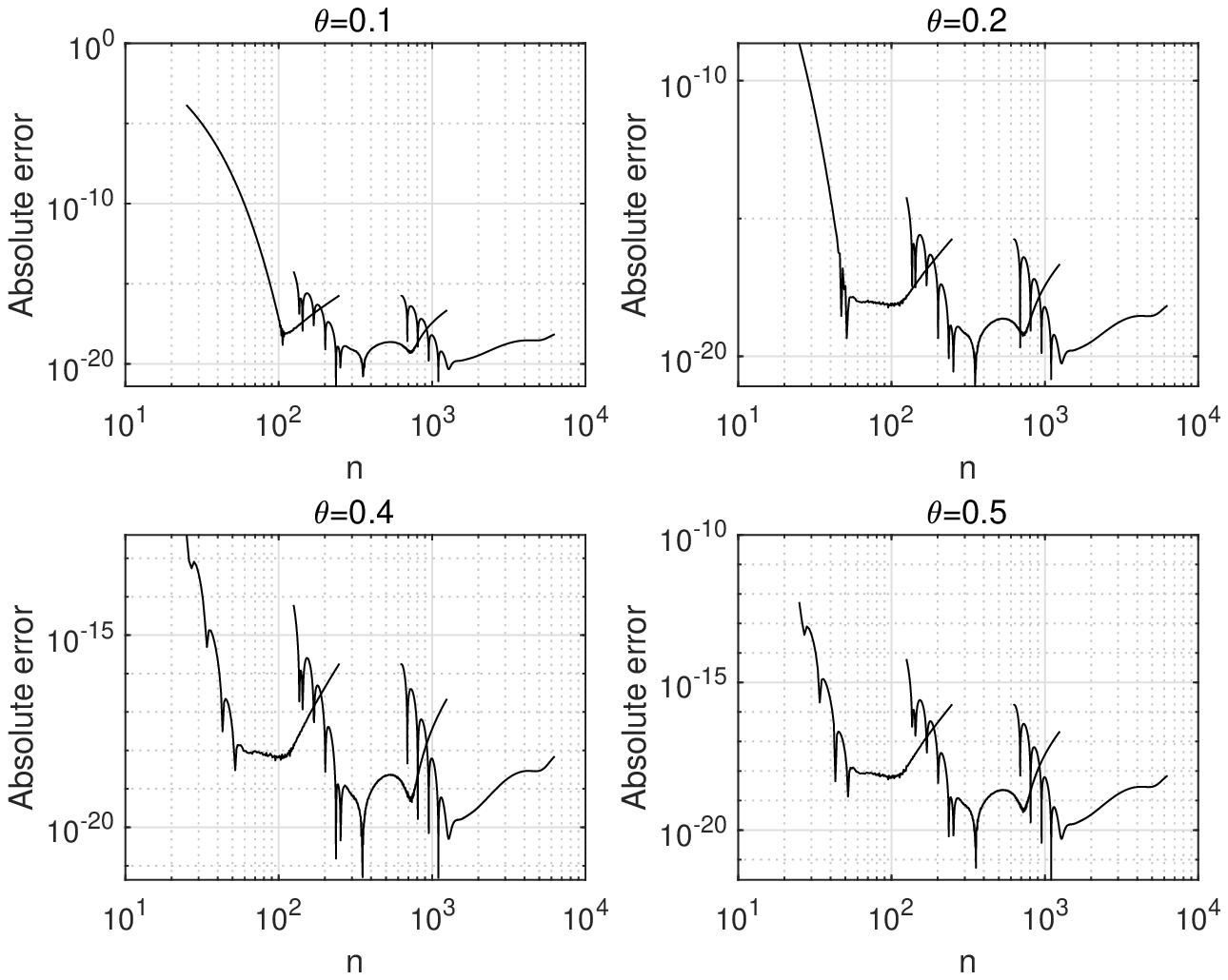}
\end{minipage}%
}%
\centering
\caption{(a) Absolute errors for $\omega_n$ of (\textbf{II}), $\alpha=0.01$. (b) Absolute errors for $\omega_n$ of (\textbf{II}), $\alpha=0.99$.}\label{C8}
\end{figure}
\section{Numerical tests}\label{sec.num}
\textit{Example 1.}
\par
\begin{table}[]
\centering
\caption{Temporal convergence rates of \textit{Example 1} at $t=0.5$ without initial corrections}\label{tab1}
{\small
{\renewcommand{\arraystretch}{1.2}
\begin{tabular}{ccrrrrrrrrr}
\toprule
$\alpha$             & $\theta$ & \multicolumn{1}{c}{$\tau=2^{-5}$} & \multicolumn{1}{c}{$\tau=2^{-6}$} & \multicolumn{1}{c}{$\tau=2^{-7}$} & \multicolumn{1}{c}{$\tau=2^{-8}$} & \multicolumn{1}{c}{$\tau=2^{-9}$} & \multicolumn{1}{l}{Rate} & \multicolumn{1}{l}{Rate} & \multicolumn{1}{l}{Rate} & \multicolumn{1}{l}{Rate} \\ \hline
\multirow{5}{*}{0.1} & 0.1      & 7.93E-04                          & 3.95E-04                          & 1.97E-04                          & 9.84E-05                          & 4.92E-05                          & 1.01                     & 1.00                     & 1.00                     & 1.00                     \\
                     & 0.2      & 3.88E-04                          & 2.42E-04                          & 1.34E-04                          & 7.02E-05                          & 3.60E-05                          & 0.68                     & 0.85                     & 0.93                     & 0.97                     \\
                     & 0.3      & 2.04E-04                          & 4.11E-05                          & 5.79E-05                          & 3.89E-05                          & 2.20E-05                          & 2.31                     & -0.49                    & 0.58                     & 0.82                     \\
                     & 0.4      & 1.01E-03                          & 2.05E-04                          & 3.00E-05                          & 4.32E-06                          & 7.16E-06                          & 2.30                     & 2.77                     & 2.80                     & -0.73                    \\
                     & 0.5      & 2.04E-03                          & 4.97E-04                          & 1.30E-04                          & 3.34E-05                          & 8.46E-06                          & 2.04                     & 1.94                     & 1.96                     & 1.98                     \\ \hline
\multirow{5}{*}{0.5} & 0.1      & 4.50E-03                          & 2.20E-03                          & 1.09E-03                          & 5.41E-04                          & 2.70E-04                          & 1.03                     & 1.02                     & 1.01                     & 1.00                     \\
                     & 0.2      & 3.39E-03                          & 1.65E-03                          & 8.17E-04                          & 4.06E-04                          & 2.02E-04                          & 1.03                     & 1.02                     & 1.01                     & 1.00                     \\
                     & 0.3      & 2.22E-03                          & 1.09E-03                          & 5.42E-04                          & 2.70E-04                          & 1.35E-04                          & 1.02                     & 1.01                     & 1.01                     & 1.00                     \\
                     & 0.4      & 9.82E-04                          & 5.14E-04                          & 2.63E-04                          & 1.33E-04                          & 6.69E-05                          & 0.93                     & 0.97                     & 0.98                     & 0.99                     \\
                     & 0.5      & 3.17E-04                          & 8.03E-05                          & 2.02E-05                          & 5.08E-06                          & 1.28E-06                          & 1.98                     & 1.99                     & 1.99                     & 1.99                     \\ \hline
\multirow{5}{*}{0.9} & 0.1      & 9.02E-03                          & 4.40E-03                          & 2.17E-03                          & 1.08E-03                          & 5.38E-04                          & 1.04                     & 1.02                     & 1.01                     & 1.00                     \\
                     & 0.2      & 6.88E-03                          & 3.33E-03                          & 1.64E-03                          & 8.11E-04                          & 4.04E-04                          & 1.05                     & 1.02                     & 1.01                     & 1.01                     \\
                     & 0.3      & 4.68E-03                          & 2.24E-03                          & 1.10E-03                          & 5.42E-04                          & 2.70E-04                          & 1.06                     & 1.03                     & 1.02                     & 1.01                     \\
                     & 0.4      & 2.42E-03                          & 1.14E-03                          & 5.53E-04                          & 2.72E-04                          & 1.35E-04                          & 1.08                     & 1.04                     & 1.02                     & 1.01                     \\
                     & 0.5      & 9.24E-05                          & 2.30E-05                          & 5.74E-06                          & 1.43E-06                          & 3.50E-07                          & 2.01                     & 2.00                     & 2.01                     & 2.03                     \\ \bottomrule
\end{tabular}}}
\end{table}

\begin{table}[]
\centering
\caption{Temporal convergence rates of \textit{Example 1} at $t=0.5$ with initial corrections}\label{tab2}
{\small
{\renewcommand{\arraystretch}{1.2}
\begin{tabular}{ccrrrrrrrrr}
\toprule
$\alpha$             & $\theta$ & \multicolumn{1}{c}{$\tau=2^{-5}$} & \multicolumn{1}{c}{$\tau=2^{-6}$} & \multicolumn{1}{c}{$\tau=2^{-7}$} & \multicolumn{1}{c}{$\tau=2^{-8}$} & \multicolumn{1}{c}{$\tau=2^{-9}$} & \multicolumn{1}{l}{Rate} & \multicolumn{1}{l}{Rate} & \multicolumn{1}{l}{Rate} & \multicolumn{1}{l}{Rate} \\ \hline
\multirow{5}{*}{0.1} & 0.1      & 3.99E-05                          & 9.81E-06                          & 2.44E-06                          & 6.13E-07                          & 1.58E-07                          & 2.03                     & 2.01                     & 1.99                     & 1.95                     \\
                     & 0.2      & 2.32E-04                          & 6.09E-05                          & 1.56E-05                          & 3.96E-06                          & 1.00E-06                          & 1.93                     & 1.96                     & 1.98                     & 1.98                     \\
                     & 0.3      & 6.13E-04                          & 1.60E-04                          & 4.15E-05                          & 1.06E-05                          & 2.67E-06                          & 1.94                     & 1.95                     & 1.97                     & 1.98                     \\
                     & 0.4      & 1.21E-03                          & 3.05E-04                          & 7.96E-05                          & 2.04E-05                          & 5.16E-06                          & 1.99                     & 1.94                     & 1.97                     & 1.98                     \\
                     & 0.5      & 2.04E-03                          & 4.97E-04                          & 1.30E-04                          & 3.34E-05                          & 8.46E-06                          & 2.04                     & 1.94                     & 1.96                     & 1.98                     \\ \hline
\multirow{5}{*}{0.5} & 0.1      & 4.82E-05                          & 1.01E-05                          & 2.31E-06                          & 5.54E-07                          & 1.41E-07                          & 2.25                     & 2.13                     & 2.06                     & 1.97                     \\
                     & 0.2      & 1.44E-06                          & 1.69E-06                          & 5.79E-07                          & 1.59E-07                          & 3.62E-08                          & -0.23                    & 1.54                     & 1.87                     & 2.13                     \\
                     & 0.3      & 2.70E-05                          & 6.10E-06                          & 1.45E-06                          & 3.58E-07                          & 9.40E-08                          & 2.14                     & 2.07                     & 2.02                     & 1.93                     \\
                     & 0.4      & 1.33E-04                          & 3.34E-05                          & 8.39E-06                          & 2.10E-06                          & 5.33E-07                          & 1.99                     & 2.00                     & 1.99                     & 1.98                     \\
                     & 0.5      & 3.17E-04                          & 8.03E-05                          & 2.02E-05                          & 5.08E-06                          & 1.28E-06                          & 1.98                     & 1.99                     & 1.99                     & 1.99                     \\ \hline
\multirow{5}{*}{0.9} & 0.1      & 1.15E-04                          & 3.01E-05                          & 7.68E-06                          & 1.93E-06                          & 4.79E-07                          & 1.94                     & 1.97                     & 1.99                     & 2.01                     \\
                     & 0.2      & 2.31E-04                          & 5.86E-05                          & 1.48E-05                          & 3.70E-06                          & 9.20E-07                          & 1.98                     & 1.99                     & 2.00                     & 2.01                     \\
                     & 0.3      & 2.65E-04                          & 6.69E-05                          & 1.68E-05                          & 4.20E-06                          & 1.04E-06                          & 1.99                     & 1.99                     & 2.00                     & 2.01                     \\
                     & 0.4      & 2.19E-04                          & 5.50E-05                          & 1.38E-05                          & 3.44E-06                          & 8.55E-07                          & 1.99                     & 2.00                     & 2.00                     & 2.01                     \\
                     & 0.5      & 9.24E-05                          & 2.30E-05                          & 5.74E-06                          & 1.43E-06                          & 3.50E-07                          & 2.01                     & 2.00                     & 2.01                     & 2.03                     \\ \bottomrule
\end{tabular}}}
\end{table}
In this example, we show the efficiency of the correction technique for the scheme.
Let $\Omega=(0,\pi)$ and $T=1$.
Take $v(x)=\sin x$ and $f=\big(6t^{3-\alpha}/\Gamma(4-\alpha)+t^3\big)\sin x$, then the exact solution is $u(x,t)=(E_\alpha(-t^\alpha)+t^3)\sin x$, where $E_\alpha(t)$ denotes the Mittag-Leffler function defined by $E_\alpha(t)=\sum_{j=0}^{\infty}\frac{t^j}{\Gamma(j\alpha+1)}$.
In this test, we take a fine space mesh $h=10^{-4}$.
Errors in the $L^2(\Omega)$ norm at fixed time $t=0.5$  are reported in Table \ref{tab1} and Table \ref{tab2} for different $\alpha$ and $\theta$.
One can find without initial corrections, the scheme has only first-order convergence rate for $\theta \in (0,\frac{1}{2})$, due to the weak singularity of the solution.
However, by adding initial corrections, we can recover the optimal convergence rate as demonstrated in Table \ref{tab2}.
\par
An interesting phenomenon occurs if we take $\theta=\frac{1}{2}$, as is shown in Table \ref{tab1}.
In this case, we can get second-order convergence rate without initial corrections.
As is pointed out in Sect.\ref{sec.sec}, the special case $\theta=\frac{1}{2}$ can not be covered theoretically in this study, and indeed we will in Example 4 further show that if we approximate the fractional derivative $D_t^\alpha u(t_{n-\frac{1}{2}})$ by a quite different approach, like using the Crank-Nicolson+fractional BDF2, we can not obtain the optimal convergence rate, or, even the first-order accuracy, which complicates the underlying analysis for this special case $\theta=\frac{1}{2}$.
\\
\\
\textit{Example 2.}
\par
We validate the sharpness of the results of Theorem \ref{thm.4} in this example.
Let $T=1$, and
\par
(i) $f=0$, $v=\sin x \in D(\Delta)$, $\Omega=(0,\pi)$, with the exact solution $u(x,t)=E_\alpha(-t^\alpha)\sin x$;
\par
(ii) $f=0$, $v=\chi_{(0,1/2)}$, $\Omega=(0,1)$;
\par
For the case (i), the results in Table \ref{tab3} indicate that our scheme with initial corrections can obtain the second-order convergence rate.
Note also that if $\theta=\frac{1}{2}$, the optimal convergence rate is recovered.
To further confirm the prefactor $t_n^{2-\alpha}$ in the error estimates, we depict in Fig.\ref{C1} the behaviors of $L^2$ errors as $t \to 0$ for $\theta=0.1$ and $\theta=0.45$, respectively.
One can find the $L^2$ errors deteriorate as $t \to 0$ with different speeds depending on $\alpha$.
By comparing with the non-solid lines which are obtained using the formula $O(t_n^{\alpha-2}\tau^2)$, we see that our error estimate is sharp.
\par
The example (ii) is to check the case $v\in L^2(\Omega)$ and $v$ is nonsmooth.
Since the exact solution is unknown in advance, we take the numerical solution on a fine mesh $h=10^{-4}, \tau=2^{-12}$ as a reference.
Empirical results in Table \ref{tab4} show that the scheme (\ref{Se.14}) can result in the optimal convergence rate.
However, the convergence rate for $\theta=\frac{1}{2}$ is much lower than that of the smooth case (i).
In Fig.\ref{C2}, we examine the prefactor $t_n^{-2}$ in error estimates for this case, and find that $\alpha$ has little effect on the behavior of error as $t\to 0$, which is in line with the theoretical analysis.
\begin{table}[]
\centering
\caption{Temporal convergence rates of \textit{Example 2} (i) at $t=0.5$ for scheme (\ref{Se.14})}\label{tab3}
{\small
{\renewcommand{\arraystretch}{1.2}
\begin{tabular}{ccrrrrrrrrr}
\toprule
$\alpha$             & $\theta$ & \multicolumn{1}{c}{$\tau=2^{-5}$} & \multicolumn{1}{c}{$\tau=2^{-6}$} & \multicolumn{1}{c}{$\tau=2^{-7}$} & \multicolumn{1}{c}{$\tau=2^{-8}$} & \multicolumn{1}{c}{$\tau=2^{-9}$} & \multicolumn{1}{l}{Rate} & \multicolumn{1}{l}{Rate} & \multicolumn{1}{l}{Rate} & \multicolumn{1}{l}{Rate} \\ \hline
\multirow{3}{*}{0.2} & 0.20     & 8.26E-05                          & 2.02E-05                          & 4.98E-06                          & 1.24E-06                          & 3.16E-07                          & 2.03                     & 2.02                     & 2.00                     & 1.98                     \\
                     & 0.35     & 6.96E-05                          & 1.67E-05                          & 4.13E-06                          & 1.03E-06                          & 2.62E-07                          & 2.06                     & 2.02                     & 2.00                     & 1.98                     \\
                     & 0.50     & 7.81E-05                          & 1.49E-05                          & 3.59E-06                          & 8.90E-07                          & 2.27E-07                          & 2.39                     & 2.05                     & 2.01                     & 1.97                     \\ \hline
\multirow{3}{*}{0.5} & 0.10     & 2.27E-04                          & 5.50E-05                          & 1.36E-05                          & 3.37E-06                          & 8.46E-07                          & 2.04                     & 2.02                     & 2.01                     & 2.00                     \\
                     & 0.30     & 1.60E-04                          & 3.95E-05                          & 9.79E-06                          & 2.44E-06                          & 6.15E-07                          & 2.02                     & 2.01                     & 2.00                     & 1.99                     \\
                     & 0.50     & 1.35E-04                          & 3.31E-05                          & 8.18E-06                          & 2.04E-06                          & 5.14E-07                          & 2.03                     & 2.01                     & 2.00                     & 1.99                     \\ \hline
\multirow{3}{*}{0.9} & 0.05     & 2.43E-04                          & 5.93E-05                          & 1.47E-05                          & 3.66E-06                          & 9.19E-07                          & 2.03                     & 2.01                     & 2.00                     & 1.99                     \\
                     & 0.45     & 7.41E-05                          & 1.85E-05                          & 4.63E-06                          & 1.16E-06                          & 2.97E-07                          & 2.00                     & 2.00                     & 1.99                     & 1.97                     \\
                     & 0.50     & 7.94E-05                          & 1.98E-05                          & 4.93E-06                          & 1.24E-06                          & 3.16E-07                          & 2.01                     & 2.00                     & 1.99                     & 1.97                     \\ \bottomrule
\end{tabular}}}
\end{table}
\begin{table}[]
\centering
\caption{Temporal convergence rates of \textit{Example 2} (ii) at $t=0.5$  for scheme (\ref{Se.14})}\label{tab4}
{\small
{\renewcommand{\arraystretch}{1.2}
\begin{tabular}{cccccccrrrr}
\toprule
\multicolumn{1}{l}{$\alpha$} & $\theta$ & $\tau=2^{-5}$ & $\tau=2^{-6}$ & $\tau=2^{-7}$ & $\tau=2^{-8}$ & $\tau=2^{-9}$ & \multicolumn{1}{l}{Rate} & \multicolumn{1}{l}{Rate} & \multicolumn{1}{l}{Rate} & \multicolumn{1}{l}{Rate} \\ \hline
\multirow{3}{*}{0.2}         & 0.20      & 1.17E-05      & 2.86E-06      & 7.08E-07      & 1.75E-07      & 4.27E-08      & 2.03                     & 2.02                     & 2.01                     & 2.04                     \\
                             & 0.35     & 2.15E-05      & 7.42E-07      & 2.05E-07      & 5.31E-08      & 1.36E-08      & 4.86                     & 1.85                     & 1.95                     & 1.97                     \\
                             & 0.50      & 2.70E-01      & 2.09E-01      & 1.67E-01      & 1.25E-01      & 8.87E-02      & 0.37                     & 0.33                     & 0.42                     & 0.49                     \\ \hline
\multirow{3}{*}{0.5}         & 0.10      & 4.90E-05      & 1.19E-05      & 2.91E-06      & 7.20E-07      & 1.77E-07      & 2.05                     & 2.03                     & 2.02                     & 2.02                     \\
                             & 0.30      & 3.45E-05      & 8.49E-06      & 2.11E-06      & 5.22E-07      & 1.29E-07      & 2.02                     & 2.01                     & 2.01                     & 2.02                     \\
                             & 0.50      & 2.04E-01      & 1.56E-01      & 1.17E-01      & 8.51E-02      & 5.91E-02      & 0.39                     & 0.42                     & 0.45                     & 0.53                     \\ \hline
\multirow{3}{*}{0.9}         & 0.05     & 1.90E-04      & 3.54E-05      & 8.63E-06      & 2.13E-06      & 5.22E-07      & 2.43                     & 2.04                     & 2.02                     & 2.03                     \\
                             & 0.45     & 5.71E-03      & 1.67E-04      & 5.28E-06      & 1.31E-06      & 3.24E-07      & 5.10                     & 4.98                     & 2.01                     & 2.02                     \\
                             & 0.50      & 1.29E-01      & 9.23E-02      & 6.53E-02      & 4.53E-02      & 3.03E-02      & 0.49                     & 0.50                     & 0.53                     & 0.58                     \\ \bottomrule
\end{tabular}}}
\end{table}
\begin{figure}[htbp]
\centering

\subfigure[]{
\begin{minipage}[t]{0.5\linewidth}
\centering
\includegraphics[width=1\textwidth]{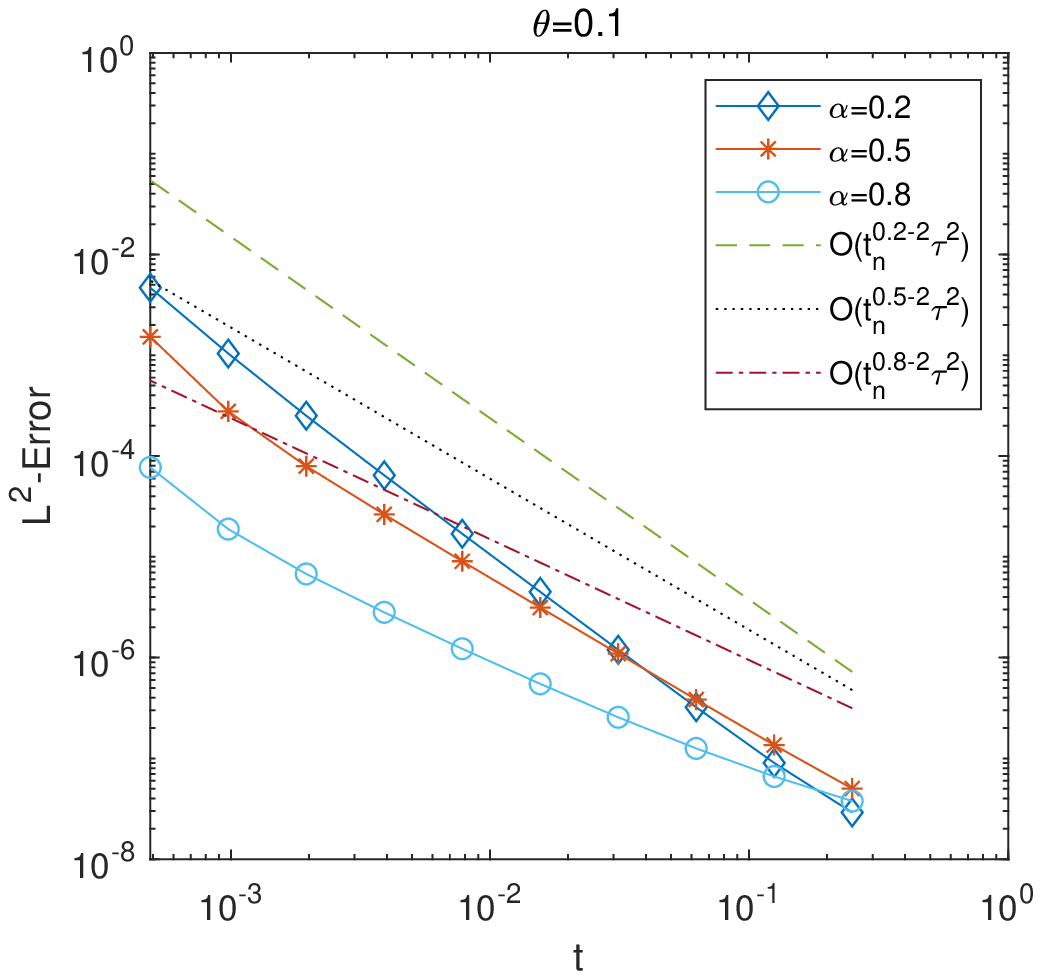}
\end{minipage}%
}%
\subfigure[]{
\begin{minipage}[t]{0.5\linewidth}
\centering
\includegraphics[width=1\textwidth]{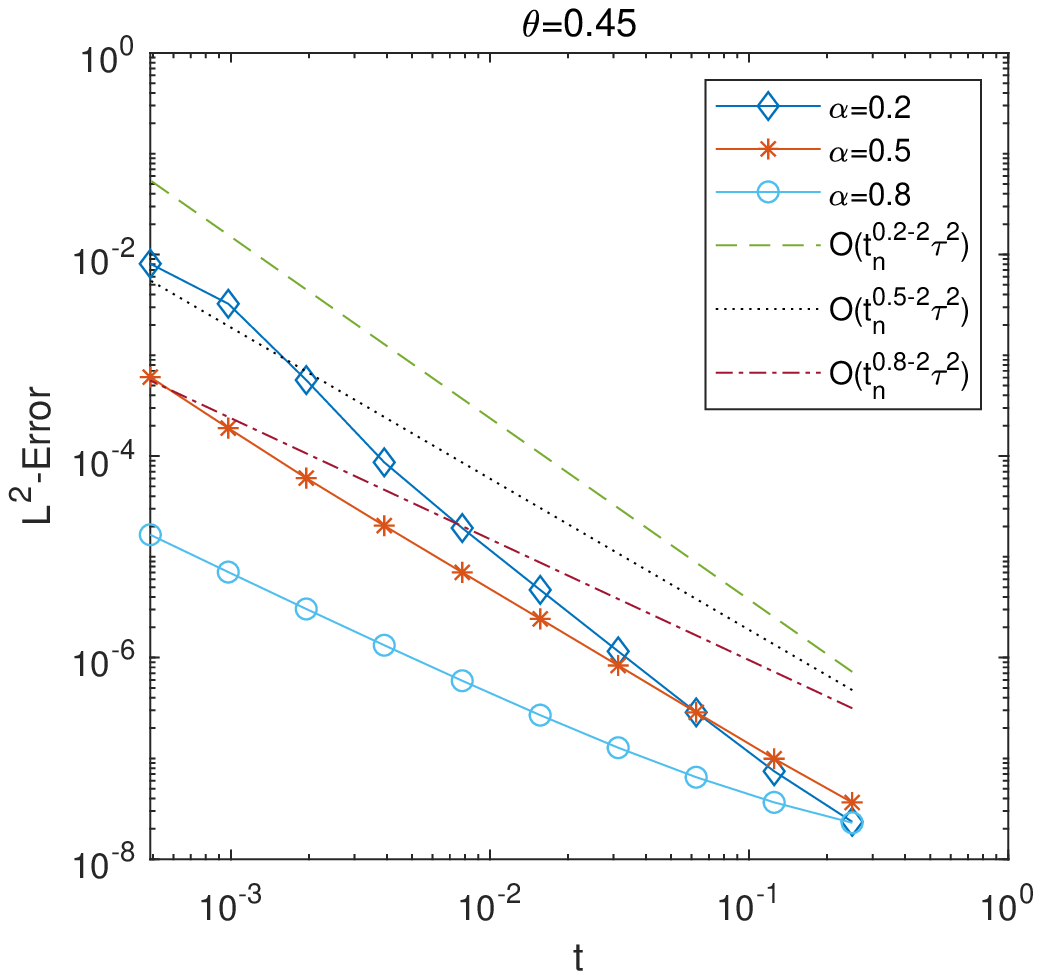}
\end{minipage}%
}%
\centering
\caption{$L^2$ errors of Example 2 (i) for time $t\to 0$ under the condition $N=2^{-12}, h=\pi\times 10^{-4}$. (a) $\theta=0.10$. (b) $\theta=0.45$.}\label{C1}
\end{figure}
\begin{figure}[htbp]
\centering

\subfigure[]{
\begin{minipage}[t]{0.5\linewidth}
\centering
\includegraphics[width=1\textwidth]{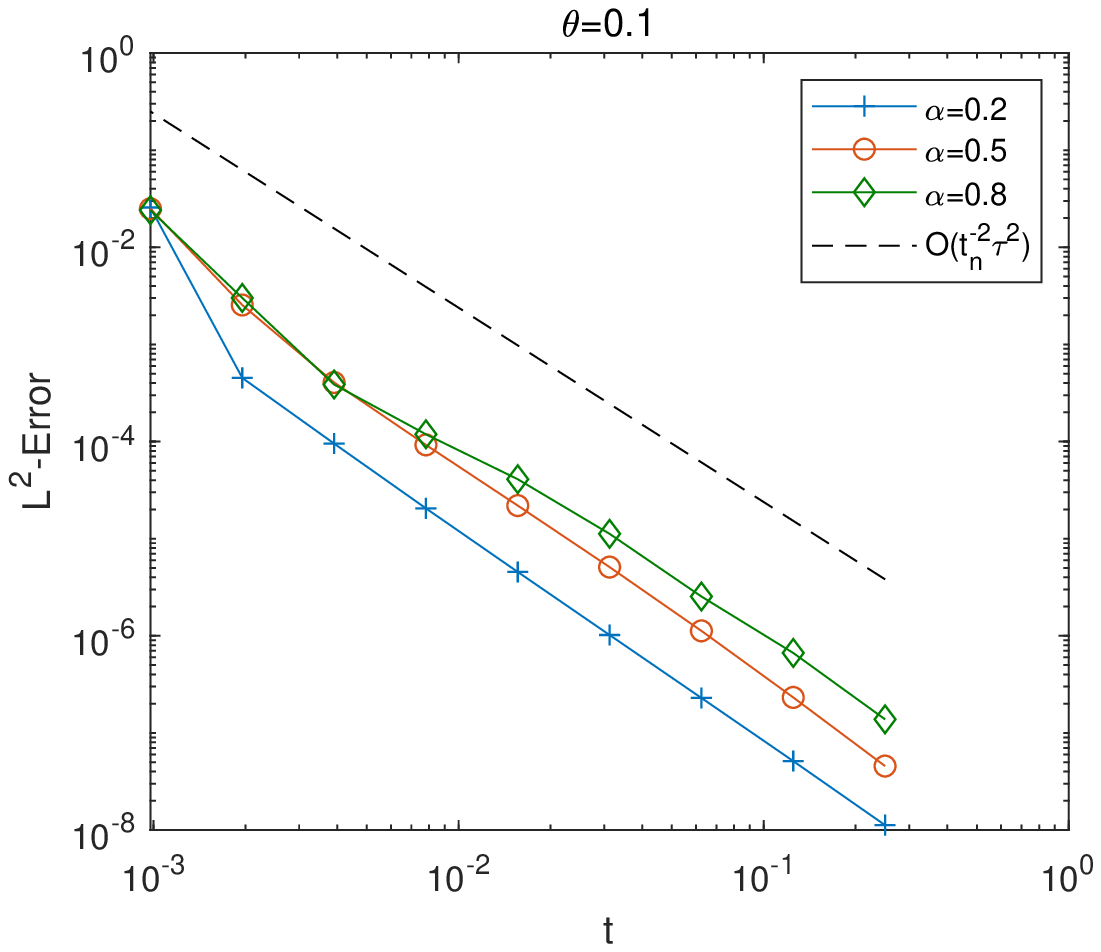}
\end{minipage}%
}%
\subfigure[]{
\begin{minipage}[t]{0.5\linewidth}
\centering
\includegraphics[width=1\textwidth]{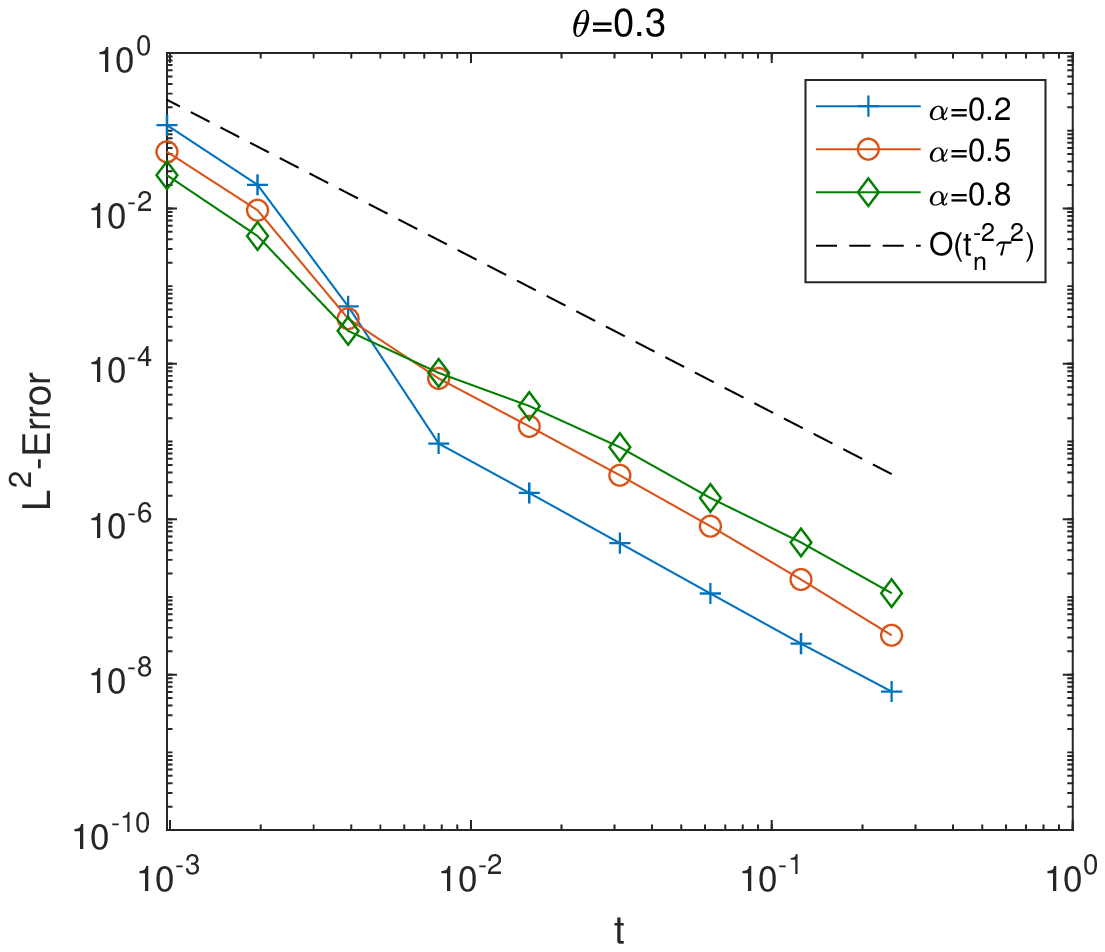}
\end{minipage}%
}%
\centering
\caption{$L^2$ errors of Example 2 (ii) for time $t\to 0$ under the condition $N=2^{-11}, h=10^{-4}$. (a) $\theta=0.1$. (b) $\theta=0.3$.}\label{C2}
\end{figure}
\\
\\
\textit{Example 3.}
\par
This example devotes to showing the unstability of the scheme (\ref{Se.14}) if $\theta>\frac{1}{2}$, and checking the robustness of the scheme when (i) $\alpha \to 1^-$, or $\alpha \to 0$, (ii) $\theta \to 0$.
We take $f=0, v=\sin x, x\in \Omega=(0,\pi)$ with the exact solution $u(x,t)=E_\alpha(-t^\alpha)\sin x$.
\par
The Fig.\ref{C3} (a) corresponds to the experiment by taking $\alpha=0.8$, $h=\pi\times10^{-2}$, $\tau=5\times2^{-9}$ and $T=10$, with different $\theta=0.509$, or $0.499$.
The figure is the numerical solution of $u(\pi/2,t)$ for $t \in [0,T]$.
One can see that for $\theta>\frac{1}{2}$, the scheme (\ref{Se.14}) may become unstable.
In Fig.\ref{C3} (b), we take $h=2\pi\times10^{-4}$, $\tau=2^{-10}$, $T=1$, and can observe the $L^2$ error varies smoothly for $\alpha$ bounded away from $0$ (in this test, for example, $\alpha \geq 0.02$), which confirms our theoretical analysis, as constants in Lemma \ref{lem.7} blow up if $\alpha \to 0$.
To check the case $\theta \to 0$, we take $\theta=0.1, 0.01, 0.001$ and $0$ for different $\alpha=0.1, 0.5$ and $0.9$ in Table \ref{tab5}, and find that our scheme is robust for quite small $\theta$ and the optimal convergence rate can also be obtained for $\theta=0$ if $\alpha$ is bounded away from $0$ such as $\alpha=0.5$ or $0.9$.
\begin{table}[]
\centering
\caption{Temporal convergence rates of \textit{Example 3} (ii) at $t=0.5$  for scheme (\ref{Se.14})}\label{tab5}
{\small
{\renewcommand{\arraystretch}{1.2}
\begin{tabular}{ccrrrrrrrrr}
\toprule
$\alpha$             & $\theta$ & \multicolumn{1}{c}{$\tau=2^{-5}$} & \multicolumn{1}{c}{$\tau=2^{-6}$} & \multicolumn{1}{c}{$\tau=2^{-7}$} & \multicolumn{1}{c}{$\tau=2^{-8}$} & \multicolumn{1}{c}{$\tau=2^{-9}$} & \multicolumn{1}{l}{Rate} & \multicolumn{1}{l}{Rate} & \multicolumn{1}{l}{Rate} & \multicolumn{1}{l}{Rate} \\ \hline
\multirow{4}{*}{0.1} & 0.100    & 4.79E-05                          & 1.16E-05                          & 2.86E-06                          & 7.16E-07                          & 1.83E-07                          & 2.04                     & 2.02                     & 2.00                     & 1.97                     \\
                     & 0.010    & 5.80E-05                          & 1.31E-05                          & 3.22E-06                          & 8.03E-07                          & 2.06E-07                          & 2.15                     & 2.02                     & 2.00                     & 1.97                     \\
                     & 0.001    & 1.01E-03                          & 2.44E-04                          & 3.29E-05                          & 1.87E-06                          & 2.10E-07                          & 2.04                     & 2.89                     & 4.14                     & 3.15                     \\
                     & 0.000    & 1.81E-03                          & 8.19E-04                          & 3.76E-04                          & 1.73E-04                          & 7.93E-05                          & 1.14                     & 1.12                     & 1.12                     & 1.12                     \\ \hline
\multirow{4}{*}{0.5} & 0.100    & 2.27E-04                          & 5.50E-05                          & 1.36E-05                          & 3.37E-06                          & 8.46E-07                          & 2.04                     & 2.02                     & 2.01                     & 2.00                     \\
                     & 0.010    & 4.22E-04                          & 7.52E-05                          & 1.62E-05                          & 3.96E-06                          & 9.92E-07                          & 2.49                     & 2.22                     & 2.03                     & 2.00                     \\
                     & 0.001    & 7.37E-04                          & 1.64E-04                          & 3.49E-05                          & 6.81E-06                          & 1.26E-06                          & 2.16                     & 2.23                     & 2.36                     & 2.44                     \\
                     & 0.000    & 7.98E-04                          & 1.93E-04                          & 4.73E-05                          & 1.17E-05                          & 2.93E-06                          & 2.05                     & 2.03                     & 2.01                     & 2.00                     \\ \hline
\multirow{4}{*}{0.9} & 0.100    & 2.01E-04                          & 4.93E-05                          & 1.22E-05                          & 3.05E-06                          & 7.66E-07                          & 2.03                     & 2.01                     & 2.00                     & 1.99                     \\
                     & 0.010    & 2.84E-04                          & 6.87E-05                          & 1.69E-05                          & 4.21E-06                          & 1.06E-06                          & 2.05                     & 2.02                     & 2.01                     & 1.99                     \\
                     & 0.001    & 2.97E-04                          & 7.16E-05                          & 1.76E-05                          & 4.36E-06                          & 1.09E-06                          & 2.05                     & 2.03                     & 2.01                     & 2.00                     \\
                     & 0.000    & 2.98E-04                          & 7.20E-05                          & 1.77E-05                          & 4.38E-06                          & 1.10E-06                          & 2.05                     & 2.03                     & 2.01                     & 2.00                     \\ \bottomrule
\end{tabular}}}
\end{table}
\begin{figure}[htbp]
\centering

\subfigure[]{
\begin{minipage}[t]{0.5\linewidth}
\centering
\includegraphics[width=1\textwidth]{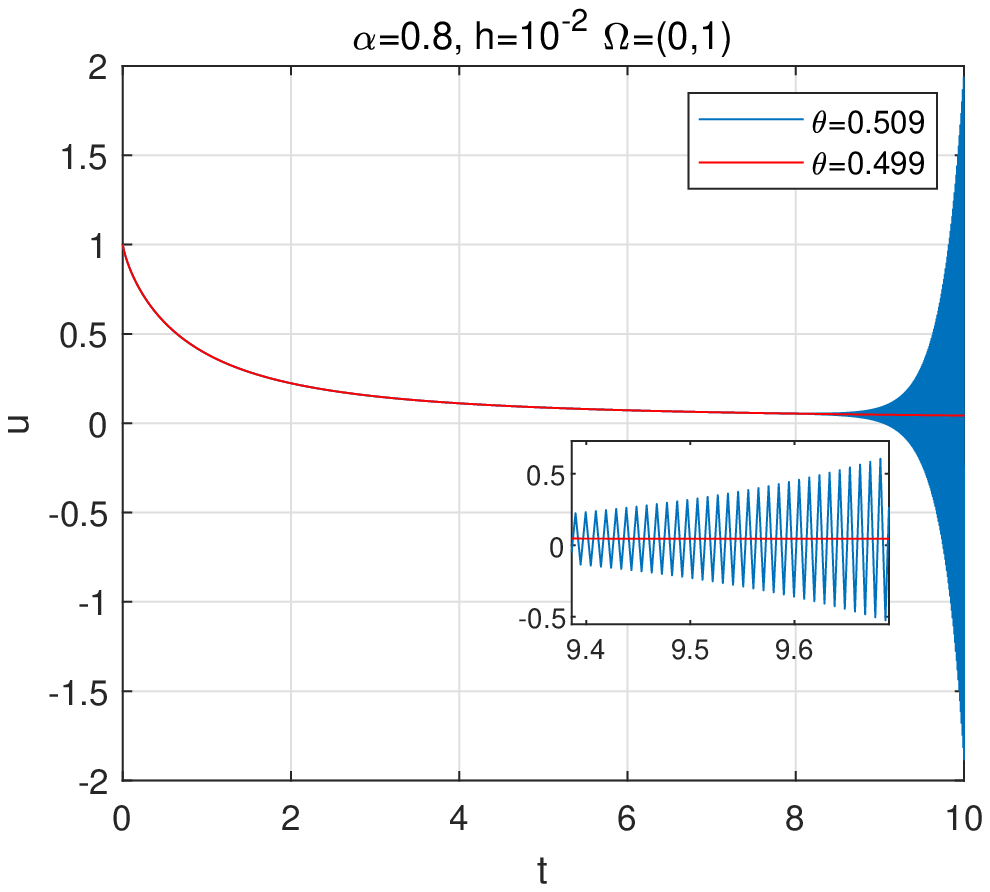}
\end{minipage}%
}%
\subfigure[]{
\begin{minipage}[t]{0.5\linewidth}
\centering
\includegraphics[width=1\textwidth]{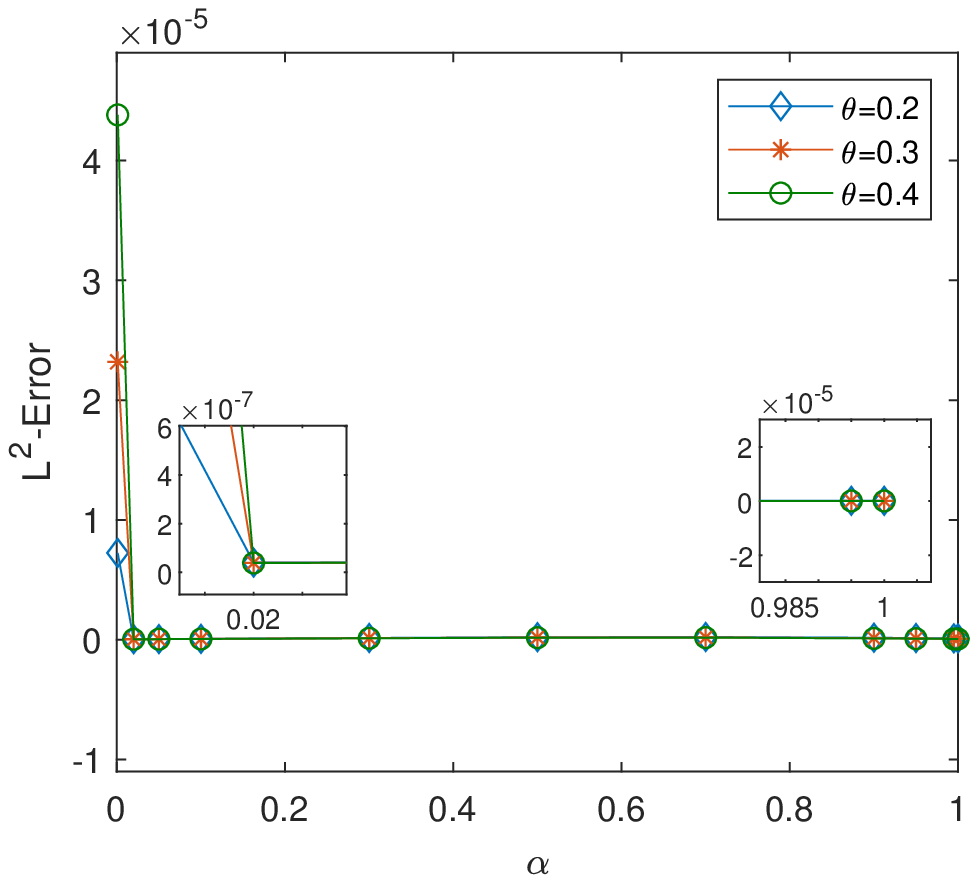}
\end{minipage}%
}%
\centering
\caption{(a) Check the stability of the scheme if $\theta>\frac{1}{2}$. (b) Check the robustness of the scheme if $\alpha \to 0^+$, or $\alpha \to 1^-$.}\label{C3}
\end{figure}
\\
\\
\textit{Example 4.}
\begin{table}[]
\centering
\caption{Temporal convergence rates of \textit{Example 4} at $t=0.5$.}\label{tab6}
{\small
{\renewcommand{\arraystretch}{1.2}
\begin{tabular}{ccrrrrrrrrr}
\toprule
$\alpha$             & $\theta$ & \multicolumn{1}{c}{$\tau=2^{-5}$} & \multicolumn{1}{c}{$\tau=2^{-6}$} & \multicolumn{1}{c}{$\tau=2^{-7}$} & \multicolumn{1}{c}{$\tau=2^{-8}$} & \multicolumn{1}{c}{$\tau=2^{-9}$} & \multicolumn{1}{l}{Rate} & \multicolumn{1}{l}{Rate} & \multicolumn{1}{l}{Rate} & \multicolumn{1}{l}{Rate} \\ \hline
\multirow{5}{*}{0.1} & 0.1      & 1.41E-05                          & 4.24E-06                          & 1.14E-06                          & 2.91E-07                          & 6.84E-08                          & 1.73                     & 1.89                     & 1.97                     & 2.09                     \\
                     & 0.2      & 2.12E-05                          & 5.89E-06                          & 1.54E-06                          & 3.88E-07                          & 9.26E-08                          & 1.85                     & 1.94                     & 1.99                     & 2.07                     \\
                     & 0.3      & 2.92E-05                          & 7.53E-06                          & 1.94E-06                          & 4.86E-07                          & 1.16E-07                          & 1.95                     & 1.96                     & 1.99                     & 2.06                     \\
                     & 0.4      & 9.25E-04                          & 1.04E-05                          & 2.33E-06                          & 5.83E-07                          & 1.41E-07                          & 6.47                     & 2.16                     & 2.00                     & 2.05                     \\
                     & 0.5      & 4.78E-01                          & 4.57E-01                          & 4.37E-01                          & 4.18E-01                          & 3.99E-01                          & 0.06                     & 0.06                     & 0.07                     & 0.07                     \\ \hline
\multirow{5}{*}{0.5} & 0.1      & 1.81E-04                          & 4.80E-05                          & 1.24E-05                          & 3.13E-06                          & 7.81E-07                          & 1.91                     & 1.96                     & 1.98                     & 2.00                     \\
                     & 0.2      & 2.17E-04                          & 5.66E-05                          & 1.44E-05                          & 3.64E-06                          & 9.07E-07                          & 1.94                     & 1.97                     & 1.99                     & 2.00                     \\
                     & 0.3      & 2.53E-04                          & 6.51E-05                          & 1.65E-05                          & 4.14E-06                          & 1.03E-06                          & 1.96                     & 1.98                     & 1.99                     & 2.00                     \\
                     & 0.4      & 4.54E-04                          & 7.36E-05                          & 1.85E-05                          & 4.65E-06                          & 1.16E-06                          & 2.62                     & 1.99                     & 1.99                     & 2.00                     \\
                     & 0.5      & 1.02E-01                          & 7.38E-02                          & 5.31E-02                          & 3.80E-02                          & 2.71E-02                          & 0.47                     & 0.48                     & 0.48                     & 0.49                     \\ \hline
\multirow{5}{*}{0.9} & 0.1      & 8.15E-04                          & 2.06E-04                          & 5.19E-05                          & 1.30E-05                          & 3.25E-06                          & 1.98                     & 1.99                     & 2.00                     & 2.00                     \\
                     & 0.2      & 8.60E-04                          & 2.17E-04                          & 5.46E-05                          & 1.37E-05                          & 3.41E-06                          & 1.99                     & 1.99                     & 2.00                     & 2.00                     \\
                     & 0.3      & 9.06E-04                          & 2.28E-04                          & 5.73E-05                          & 1.43E-05                          & 3.58E-06                          & 1.99                     & 1.99                     & 2.00                     & 2.00                     \\
                     & 0.4      & 9.72E-04                          & 2.39E-04                          & 6.00E-05                          & 1.50E-05                          & 3.75E-06                          & 2.02                     & 2.00                     & 2.00                     & 2.00                     \\
                     & 0.5      & 1.67E-02                          & 8.72E-03                          & 4.61E-03                          & 2.46E-03                          & 1.31E-03                          & 0.94                     & 0.92                     & 0.91                     & 0.90                     \\ \bottomrule
\end{tabular}}}
\end{table}
\par
This example is a further exploration for Crank-Nicolson scheme obtained by a different approximation formula for the derivative $D_t^\alpha u(t_{n-\frac{1}{2}})$.
To that end, we consider a general case, that approximating $D_t^\alpha u(t_{n-\theta})$ as follows
\begin{equation}\label{Nu.1}\begin{split}
D_t^\alpha u(t_{n-\theta})=\theta D_t^\alpha u(t_{n-1})+(1-\theta)D_t^\alpha u(t_{n})+O(\tau^2), \quad
\text{for sufficiently smooth $u$},
\end{split}\end{equation}
and then, applying the fractional BDF2 to approximate the term $D_t^\alpha u(t_{n})$ by
\begin{equation}\label{Nu.2}\begin{split}
D_\tau^\alpha u^n=\tau^{-\alpha}\sum_{k=0}^{n}\varpi_{n-k}u^k, \quad
\sum_{k=0}^{\infty}\varpi_k\xi^k=\varpi(\xi)=(3/2-2\xi+\xi^2/2)^\alpha.
\end{split}\end{equation}
We thus can obtain a full discrete scheme like (\ref{Se.14}) with the generating function $\omega(\xi)=(1-\theta+\theta\xi)\varpi(\xi)$.
\par
The empirical results in Table \ref{tab6} are obtained under the same conditions as Example 1, that $f=\big(6t^{3-\alpha}/\Gamma(4-\alpha)+t^3\big)\sin x$, $v=\sin x, x\in \Omega=(0,\pi)$ and $T=1$ with the exact solution $u(x,t)=(t^3+E_\alpha(-t^\alpha))\sin x$.
Numerically, we conclude the initial corrections designed initially for the SFTR, which disappear if $\theta=\frac{1}{2}$, are quite suitable for this example, provided $\theta \in (0,1/2)$.
If $\theta=\frac{1}{2}$, however, we can only get an $\alpha$-th order convergence rate, which is much lower than the second-order accuracy of the SFTR (see Table \ref{tab2} with $\theta=\frac{1}{2}$).
\\
\\
\textit{Example 5.}
\par
In this example, we compare the scheme (\ref{Se.14}) when $\theta=\frac{\alpha}{2}$, which is known as the fractional Crank-Nicolson scheme with the one studied in \cite{JinLiZh1} where a two-step initial correction technique was applied.
Further, we demonstrate the efficiency of the fast algorithm (\textbf{I}) and (\textbf{II}) developed for the SFTR, especially for long time simulations.
\par
Let $\Omega$ be an arbitrary convex polygonal domain in $\mathbb{R}^2$ with a quasi-uniform triangulation ($h=\frac{1}{20}$) as Fig.\ref{C4} (a).
Let $f=0$, $T=1$, and the initial data is given by
\begin{equation}\label{Nu.3}\begin{split}
v(x,y)=
\begin{cases}
  0.25, &  (x-1.7)^2+(y-1)^2<\frac{1}{32} \\
  0, &  (x-1.7)^2+(y-1)^2>\frac{1}{8} \\
  0.5-\sqrt{2(x-1.7)^2+2(y-1)^2}, & \mbox{otherwise},
\end{cases}
\end{split}\end{equation}
as depicted in Fig.\ref{C4} (b).
We take the numerical solution on a fine mesh $h=\frac{1}{200}$, $\tau=2^{-8}$ as the reference solution to calculate the convergence rates.
In Table \ref{tab7}, both schemes are performed to get the numerical solutions, from which one can conclude both of them are efficient.
In Fig.\ref{C5}, we show the evolution of the solution under $\alpha=0.5$ by the scheme (\ref{Se.14}).
Obviously, the numerical solution at final time with a decreasing height is smoother than the initial data.
\par
In Fig.\ref{C9}, by taking $h=\frac{1}{200}, T=1, \tau=2^{-8}$, we plot the relative errors of the fast algorithm solution $U_{hf}$ by the formula
\[
\max_{1\leq n \leq N}\frac{\|U_h^n-U^n_{hf}\|_{L^\infty}}{\|U_h^n\|_{L^\infty}}
\]
with varying $\theta$ for different $\alpha=0.01,0.3,0.6,0.99$.
One observes both fast algorithms require $\theta$ be bounded away from $0$ which is in line with the discussion in Sect.\ref{sec.fast}.
Moreover, both algorithms will deteriorate as $\theta$ tends to $\frac{1}{2}$, the reason of which deserves a further study.
For small $\alpha$ such as $\alpha=0.01$, Fast-II algorithm is superior to the first one by allowing a wider range of $\theta$.
In Fig.\ref{C10}, we illustrate the efficiency and accuracy of the fast algorithms for long time simulations.
By fixing $h=\frac{1}{100}$, $\tau=2^{-7}$ and taking $\alpha=0.3$, $\theta=0.1$, one can find the computing complexity of both fast algorithms (Fig.\ref{C10} (a)) is of $O(N\log N)$, which is much more efficient than the standard algorithm.
Both fast algorithms perform quite well for long time simulations as one can see in Fig.\ref{C10} (b) the relative errors are small enough and grow slowly with the advance of time.
\begin{figure}[htbp]
\centering
\subfigure[]{
\begin{minipage}[t]{0.5\linewidth}
\centering
\includegraphics[width=1\textwidth]{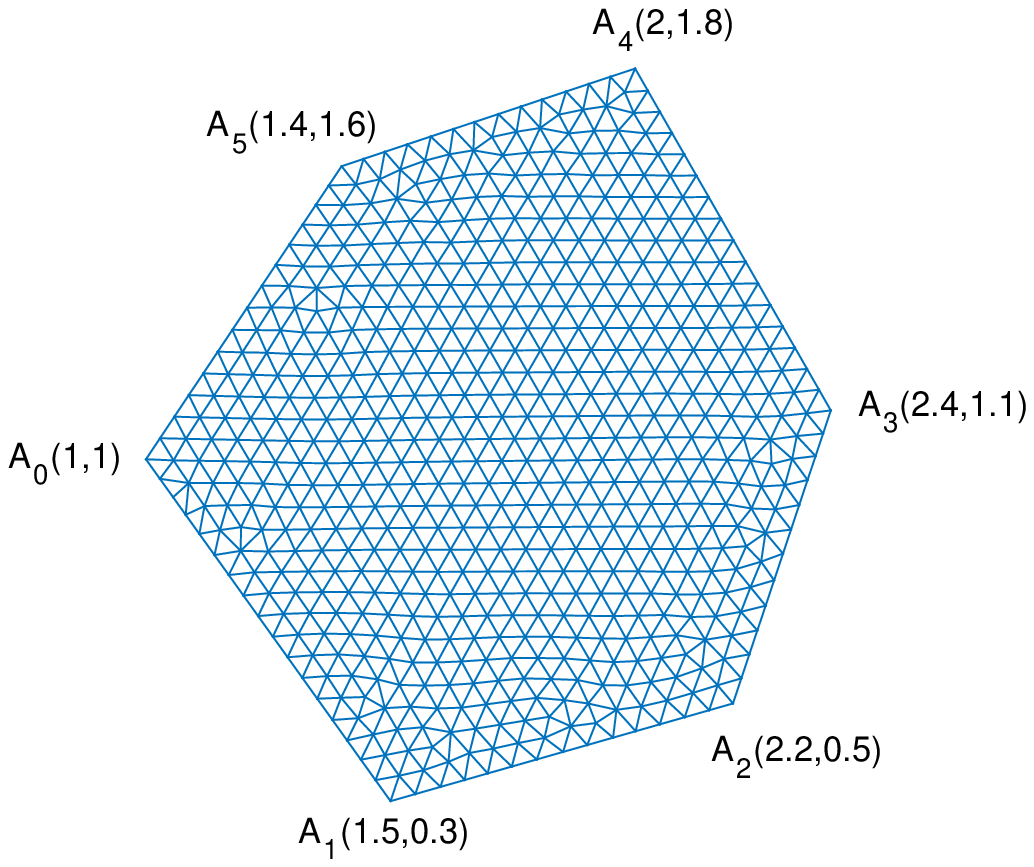}
\end{minipage}%
}%
\subfigure[]{
\begin{minipage}[t]{0.5\linewidth}
\centering
\includegraphics[width=1\textwidth]{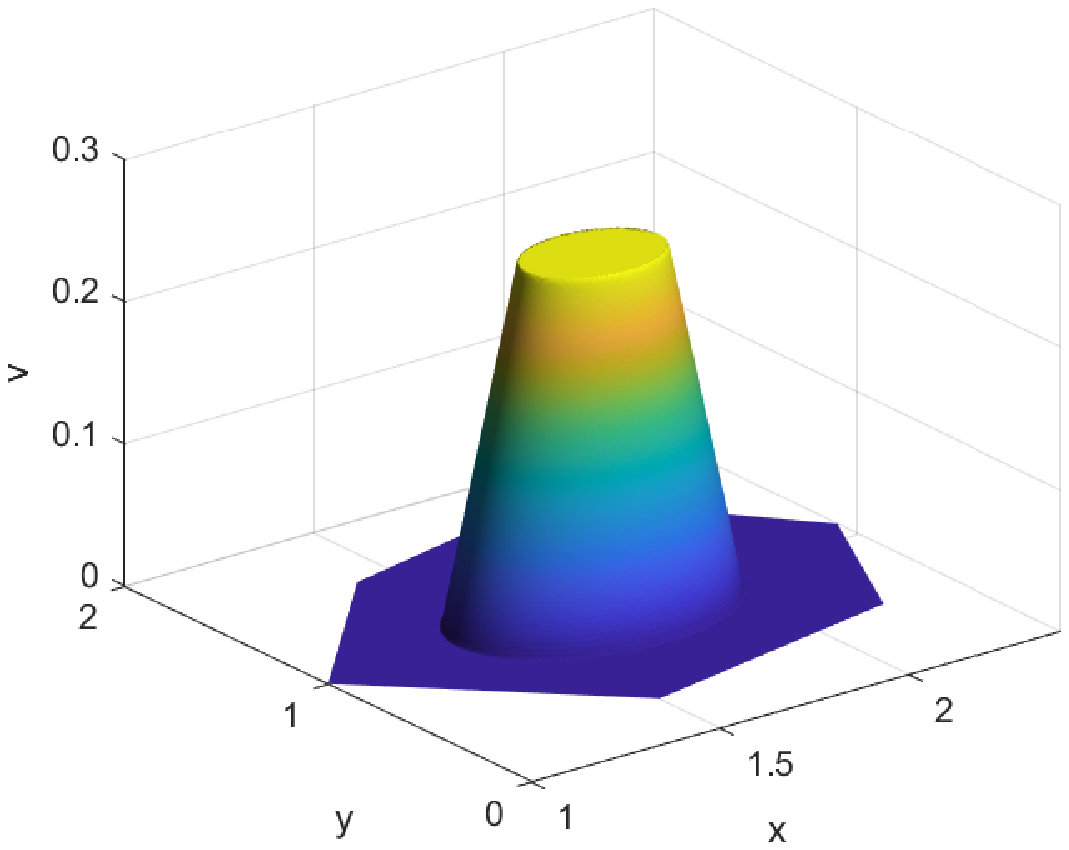}
\end{minipage}%
}%
\centering
\caption{(a) The region $\Omega$ with a quasi-uniform partition, $h=\frac{1}{20}$. (b) The nonsmooth initial value $v(x,y)$.}\label{C4}
\end{figure}

\begin{figure}[htbp]
\centering
\subfigure[]{
\begin{minipage}[t]{0.5\linewidth}
\centering
\includegraphics[width=1\textwidth]{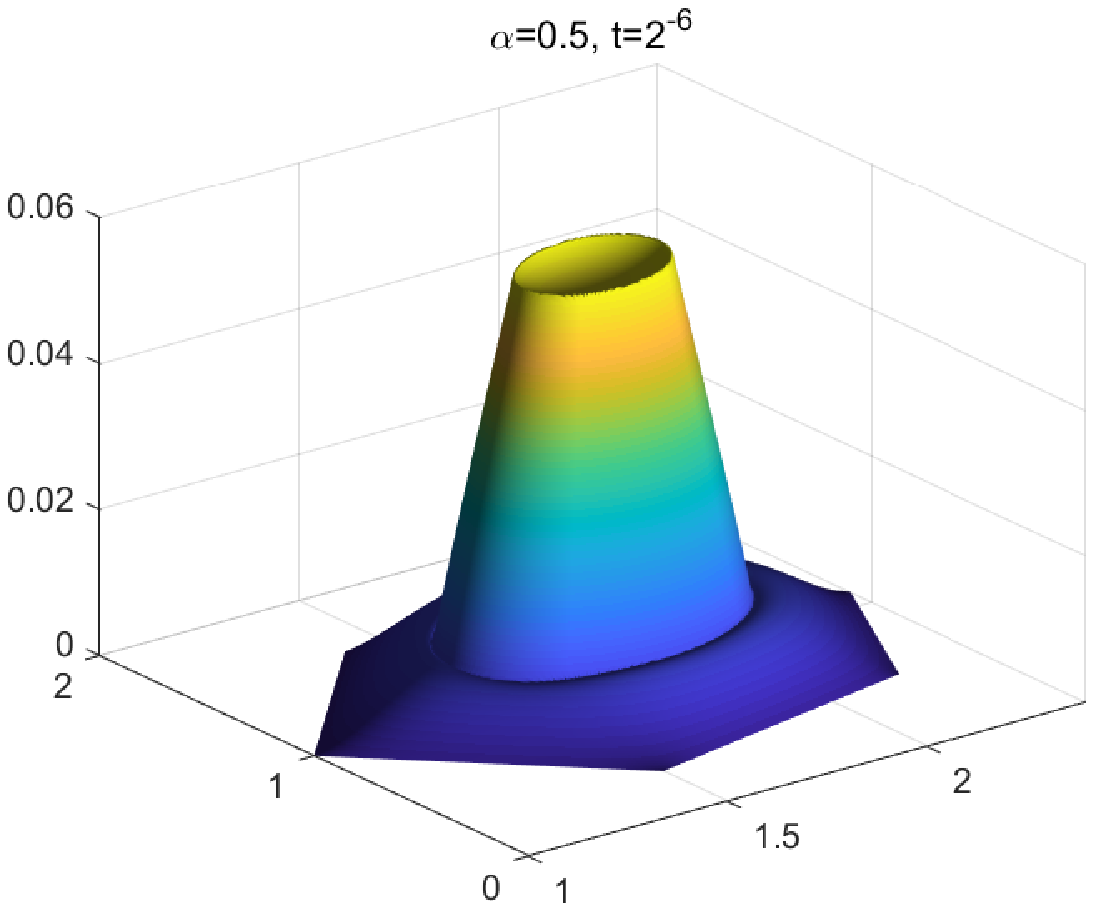}
\end{minipage}%
}%
\subfigure[]{
\begin{minipage}[t]{0.5\linewidth}
\centering
\includegraphics[width=1\textwidth]{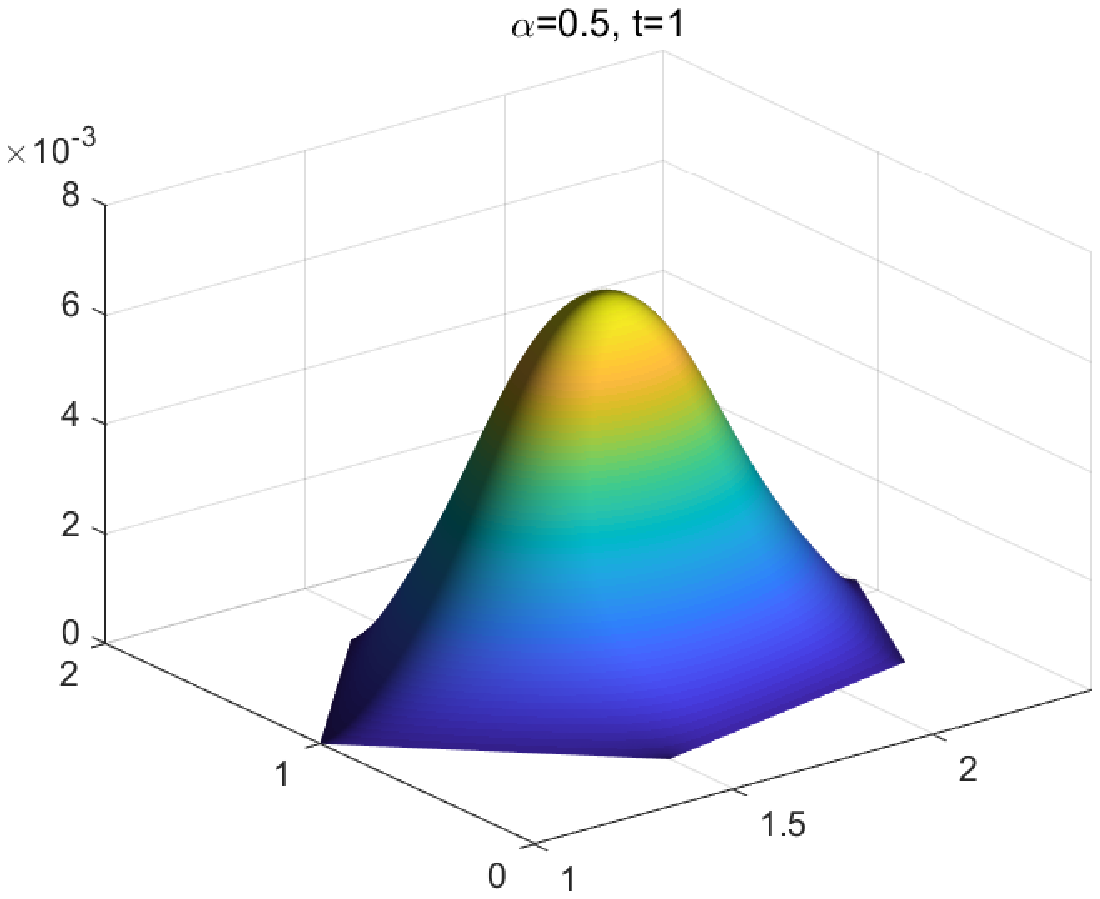}
\end{minipage}%
}%
\centering
\caption{(a) Numerical solution at time $t=2^{-6}$, $\alpha=0.5$. (b) Numerical solution at time $t=1$, $\alpha=0.5$.}\label{C5}
\end{figure}
\begin{figure}[htbp]
\centering
\subfigure[]{
\begin{minipage}[t]{0.5\linewidth}
\centering
\includegraphics[width=1\textwidth]{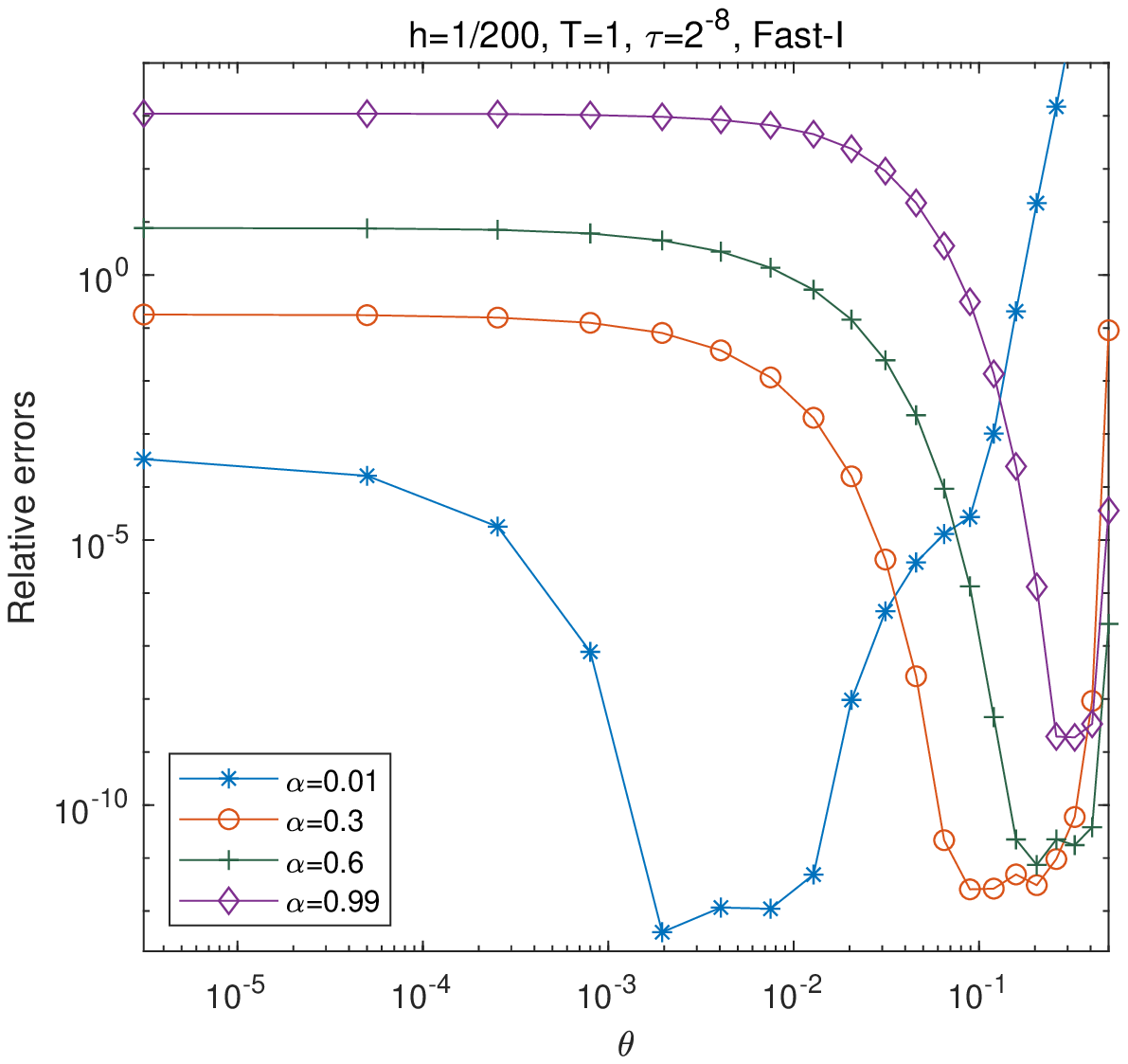}
\end{minipage}%
}%
\subfigure[]{
\begin{minipage}[t]{0.5\linewidth}
\centering
\includegraphics[width=1\textwidth]{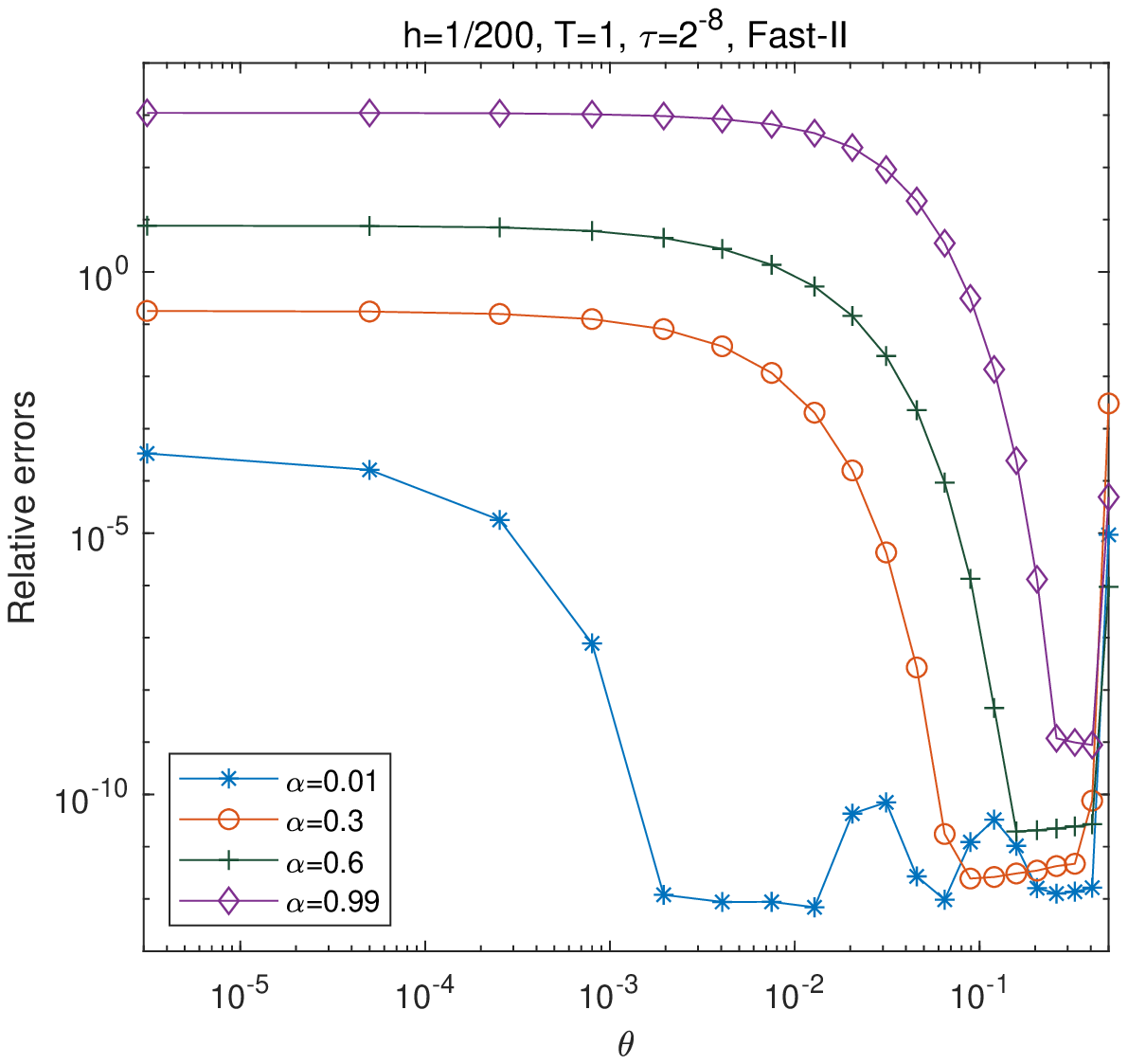}
\end{minipage}%
}%
\centering
\caption{Relative errors for fast algorithms (\textbf{I})(a) and \textbf{II}(b) with different $\theta$. $h=\frac{1}{200}, T=1, \tau=2^{-8}$.}\label{C9}
\end{figure}
\begin{figure}[htbp]
\centering
\subfigure[]{
\begin{minipage}[t]{0.5\linewidth}
\centering
\includegraphics[width=1\textwidth]{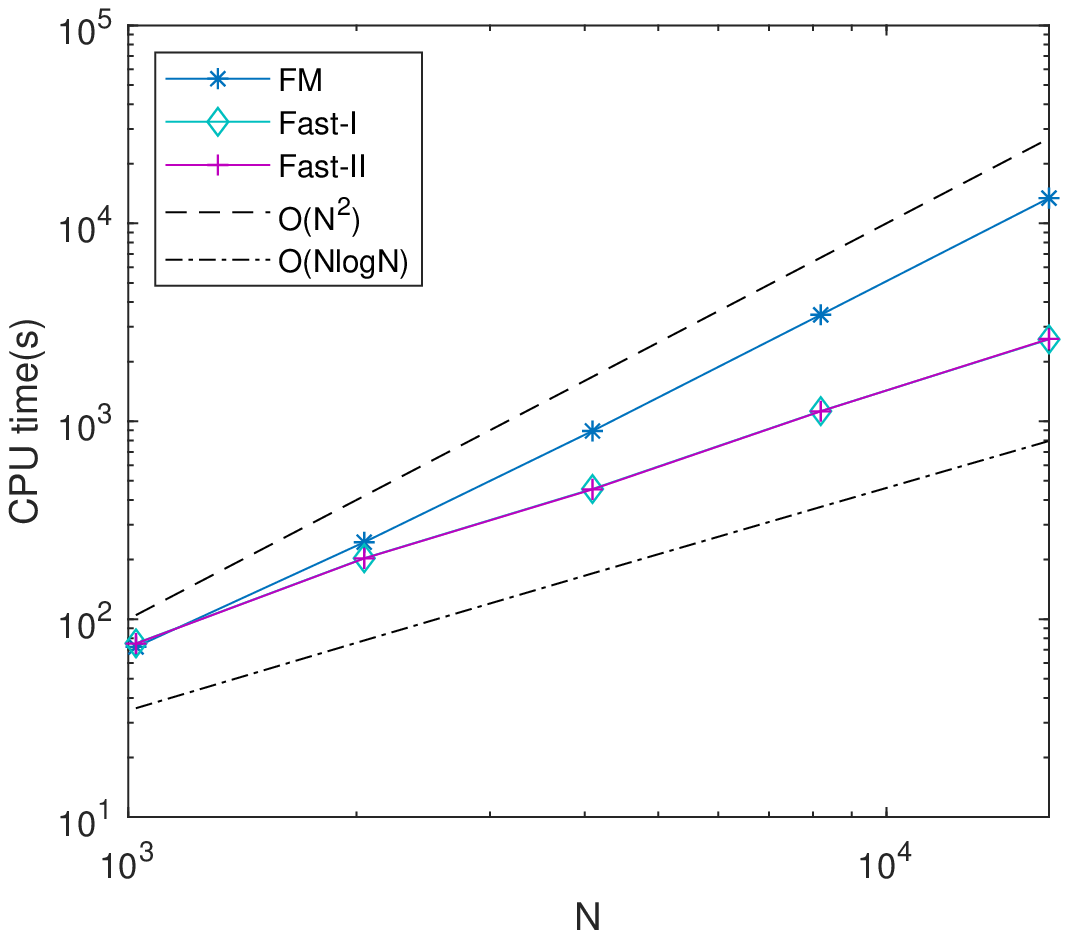}
\end{minipage}%
}%
\subfigure[]{
\begin{minipage}[t]{0.5\linewidth}
\centering
\includegraphics[width=1\textwidth]{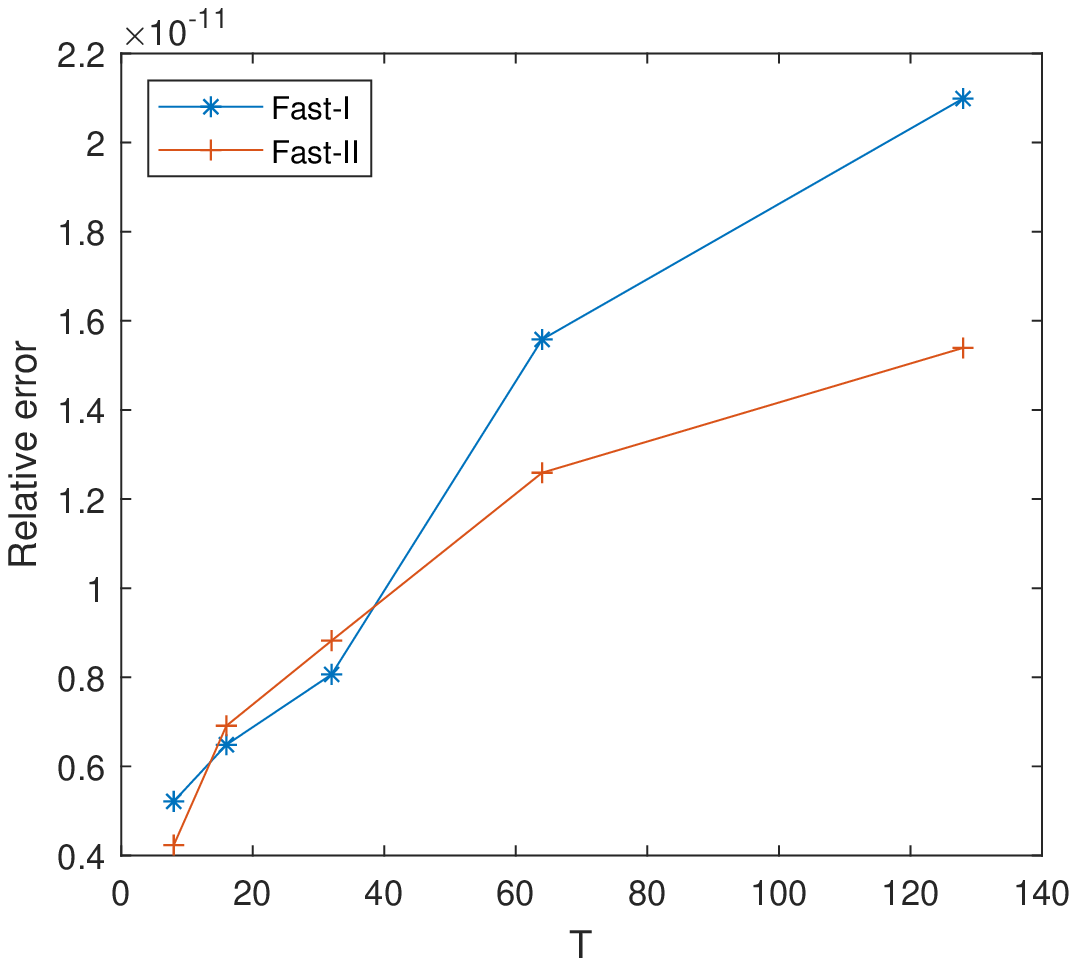}
\end{minipage}%
}%
\centering
\caption{Setting $h=\frac{1}{100}$, $\tau=2^{-7}$, $\alpha=0.3$ and $\theta=0.1$. (a) CPU time consumed for fast algorithms I and II. (b) Relative errors obtained for fast algorithm I and II.}\label{C10}
\end{figure}
\begin{table}[]
\centering
\caption{Temporal convergence rates of \textit{Example 5} at $t=0.5$.}\label{tab7}
{\small
{\renewcommand{\arraystretch}{1.2}
\begin{tabular}{crcclcc}
\toprule
\multicolumn{1}{l}{}         & \multicolumn{1}{l}{}       & \multicolumn{2}{l}{Scheme (\ref{Se.14})}                           &  & \multicolumn{2}{l}{Scheme in \cite{JinLiZh1}}                           \\ \cline{3-4} \cline{6-7}
\multicolumn{1}{l}{$\alpha$} & \multicolumn{1}{l}{$\tau$} & \multicolumn{1}{l}{Error} & \multicolumn{1}{l}{Rate} &  & \multicolumn{1}{l}{Error} & \multicolumn{1}{l}{Rate} \\ \hline
\multirow{4}{*}{0.1}         & 1/8                        & 1.53E-05                  & \multicolumn{1}{l}{}     &  & 2.24E-05                  & \multicolumn{1}{l}{}     \\
                             & 1/16                       & 4.21E-06                  & 1.86                     &  & 4.75E-06                  & 2.24                     \\
                             & 1/32                       & 9.68E-07                  & 2.12                     &  & 1.24E-06                  & 1.93                     \\
                             & 1/64                       & 2.23E-07                  & 2.12                     &  & 4.47E-07                  & 1.48                     \\ \hline
\multirow{4}{*}{0.5}         & 1/8                        & 1.13E-03                  & \multicolumn{1}{l}{}     &  & 3.90E-04                  & \multicolumn{1}{l}{}     \\
                             & 1/16                       & 2.17E-05                  & 5.70                     &  & 3.17E-05                  & 3.62                     \\
                             & 1/32                       & 5.09E-06                  & 2.09                     &  & 7.15E-06                  & 2.15                     \\
                             & 1/64                       & 1.19E-06                  & 2.10                     &  & 1.63E-06                  & 2.13                     \\ \bottomrule
\end{tabular}}}
\end{table}
\section{Conclusion and discussion}\label{sec.con}
In this study, we analyze a class of time-stepping methods with initial corrections for subdiffusion problems.
Stability analysis is provided by using the maximal $\ell^p$-regularity and a discrete Gr\"{o}nwall type inequality.
Sharp error estimates which depend on the regularity of the initial data and source term are presented based on the Laplace transform and estimates of kernel functions.
Moreover, we consider the dependence on the fractional order $\alpha$ for constants $c$ involved in error estimates, showing that our estimates are $\alpha$-robust.
To speed up the calculation in simulations, two different fast algorithms are developed for the SFTR with some discussions on the choice of the parameter $\theta$.
Several numerical experiments are performed to guarantee the robustness of our scheme and confirm the efficiency of the initial correction technique and fast algorithm.
\par
As is shown in the numerical tests section, if the shifted parameter $\theta$ is chosen as $\frac{1}{2}$, which is theoretically excluded from this study, the empirical results are rather amazing as the optimal convergence rate is recovered for smooth initial data (still results in a weak regular solution) although the initial corrections vanish in this case.
We emphasize here such phenomenon is closely related to the approach one employs to discretize the fractional derivative, or, from the aspect of generating functions $\omega(\xi)$, to singularity points of $\omega(\xi)$.
Developing analysis for the special case $\theta=\frac{1}{2}$ under different regularity of initial data, and designing initial corrections if convergence rates are not optimal are challengable, and will be our future work.
\section*{Acknowledgements}
The work of the second author was supported in part by grants NSFC 12061053, 11661058 and the NSF of Inner Mongolia 2020MS01003.
The work of the third author was supported in part by the grant NSFC 11761053. The work of the fourth author was supported in part by grants NSFC 11871092 and NSAF U1930402.

\end{document}